\title{\textbf{The Best Ways to Slice a Polytope}}
\author{Marie-Charlotte Brandenburg,  Jes\'us A. De Loera, and Chiara Meroni}
\date{}
\definecolor{cb-black}      {RGB}{  0,   0,   0}
\definecolor{cb-blue-green} {RGB}{  0,  073,  073}
\definecolor{cb-green-sea}  {RGB}{  0, 146, 146}
\definecolor{cb-rose}       {RGB}{255, 109, 182}
\definecolor{cb-salmon-pink}{RGB}{255, 182, 119}
\definecolor{cb-purple}     {RGB}{ 73,   0, 146}
\definecolor{cb-blue}       {RGB}{ 0, 109, 219}
\definecolor{cb-lilac}      {RGB}{182, 109, 255}
\definecolor{cb-blue-sky}   {RGB}{109, 182, 255}
\definecolor{cb-blue-light} {RGB}{182, 219, 255}
\definecolor{cb-burgundy}   {RGB}{146,   0,   0}
\definecolor{cb-brown}      {RGB}{146,  73,   0}
\definecolor{cb-clay}       {RGB}{219, 209,   0}
\definecolor{cb-green-lime} {RGB}{ 36, 255,  36}
\definecolor{cb-yellow}     {RGB}{255, 255, 109}
\pgfplotsset{width=7cm, compat=1.10}
\newcommand{\ma}{\begin{pmatrix}}
\newcommand{\trix}{\end{pmatrix}}
\newcommand{\sma}{\left(\begin{smallmatrix}}
\newcommand{\strix}{\end{smallmatrix}\right)}
\newcommand{\bu}{\mathbf{u}}
\newcommand{\bx}{\mathbf{x}}
\newcommand{\bt}{\mathbf{t}}
\newcommand{\bv}{\mathbf{v}}
\newcommand{\ba}{\mathbf{a}}
\newcommand{\bb}{\mathbf{b}}
\newcommand{\bs}{\mathbf{s}}
\newcommand{\bp}{\mathbf{p}}
\newcommand{\bk}{\mathbf{k}}
\newcommand{\balpha}{\boldsymbol{\alpha}}
\newcommand{\R}{\mathbb{R}}
\newcommand{\Z}{\mathbb{Z}}
\newcommand{\Q}{\mathbb{Q}}
\newcommand{\dd}{\mathrm{d}}
\DeclareMathOperator{\vol}{vol}
\DeclareMathOperator{\conv}{conv}
\newcommand{\dx}{\ \dd \bx}
\newcommand{\regtra}{\mathcal R_{\text{\rotatebox{-15}{$\scriptscriptstyle\uparrow$}}}\text{\hspace*{-.07em}}}
\newcommand{\regrad}{\mathcal R_\circlearrowleft}
\newcommand{\chamtra}{\mathcal C_{\text{\rotatebox{-15}{$\scriptscriptstyle\uparrow$}}}^\bu\text{\hspace*{-.07em}}}
\newcommand{\trarrow}{
    \text{\rotatebox{-15}{$\scriptscriptstyle\uparrow$}}
    \text{\hspace*{-.07em}}
}
\newcommand{\chamtraprime}{\mathcal C_{\text{\rotatebox{-15}{$\scriptscriptstyle\uparrow$}}}^{\bu'}\text{\hspace*{-.07em}}}
\newcommand{\chamrad}{\mathcal C_\circlearrowleft}
\newtheorem{theorem}{Theorem}[section]
\newtheorem{lemma}[theorem]{Lemma}
\newtheorem{prop}[theorem]{Proposition}
\newtheorem*{conjs*}{Conjectures}
\Crefname{prop}{Proposition}{Propositions}
\Crefname{subsection}{Subsection}{Subsections}
\theoremstyle{definition}
\declaretheoremstyle[
  qed=$\diamond$,
  sibling=theorem
]{mythmstyle}
\declaretheorem[style=mythmstyle]{example}
\theoremstyle{remark}
\newtheorem{remark}[theorem]{Remark}
\def\keywords{\xdef\@thefnmark{}\@footnotetext}
\def\mscclasses{\xdef\@thefnmark{}\@footnotetext}
\begin{document}

\maketitle

\mscclasses{\hspace*{-2.3em} MSC classes:
52B55, 
52C35, 
52A38, 
52A40, 
52B11, 
90C27, 
52C45, 
14P10. 
}
\keywords{\hspace*{-2.3em} Keywords: polytopes, hyperplane sections, hyperplane arrangements, optimal slices, volume, integration over polyhedral regions, combinatorial types of polytopes, extremal problems on polytopes.}

{\vspace*{-0.5em}\centerline{ 
\emph{Dedicated to G\"unter M. Ziegler on the occasion of his $\mathit{60}^{\text{th}}$ birthday.}}

\begin{abstract}
    We study the structure of the set of all possible affine hyperplane sections of a convex polytope. We present two different cell decompositions of this set, induced by hyperplane arrangements. Using our decomposition, we bound the number of possible combinatorial  types of sections and craft algorithms that compute optimal sections of the polytope according to various combinatorial and metric criteria, including sections that maximize the number of $k$-dimensional faces, maximize the volume, and maximize the integral of a polynomial. Our optimization algorithms run in polynomial  time in fixed dimension, but the same problems show hardness otherwise. Our tools can be extended to intersection with halfspaces and projections onto hyperplanes. Finally, we present several experiments illustrating our 
    theorems and algorithms on famous polytopes.
\end{abstract}

\section{Introduction} 
What is the best way to slice a $3$-dimensional permutahedron? \Cref{fig:permu} shows three of many possible ``best slices''. In this article we give a finite description of all affine hyperplane sections of an arbitrary polytope, called \emph{slices}, and 
explain how to compute the optimal ones. Our methods apply also to the investigation of the number of combinatorial types of slices. \\
\begin{figure}[!h]
    \centering
    \begin{subfigure}[t]{0.32\textwidth}
        \includegraphics[width=0.9\textwidth]{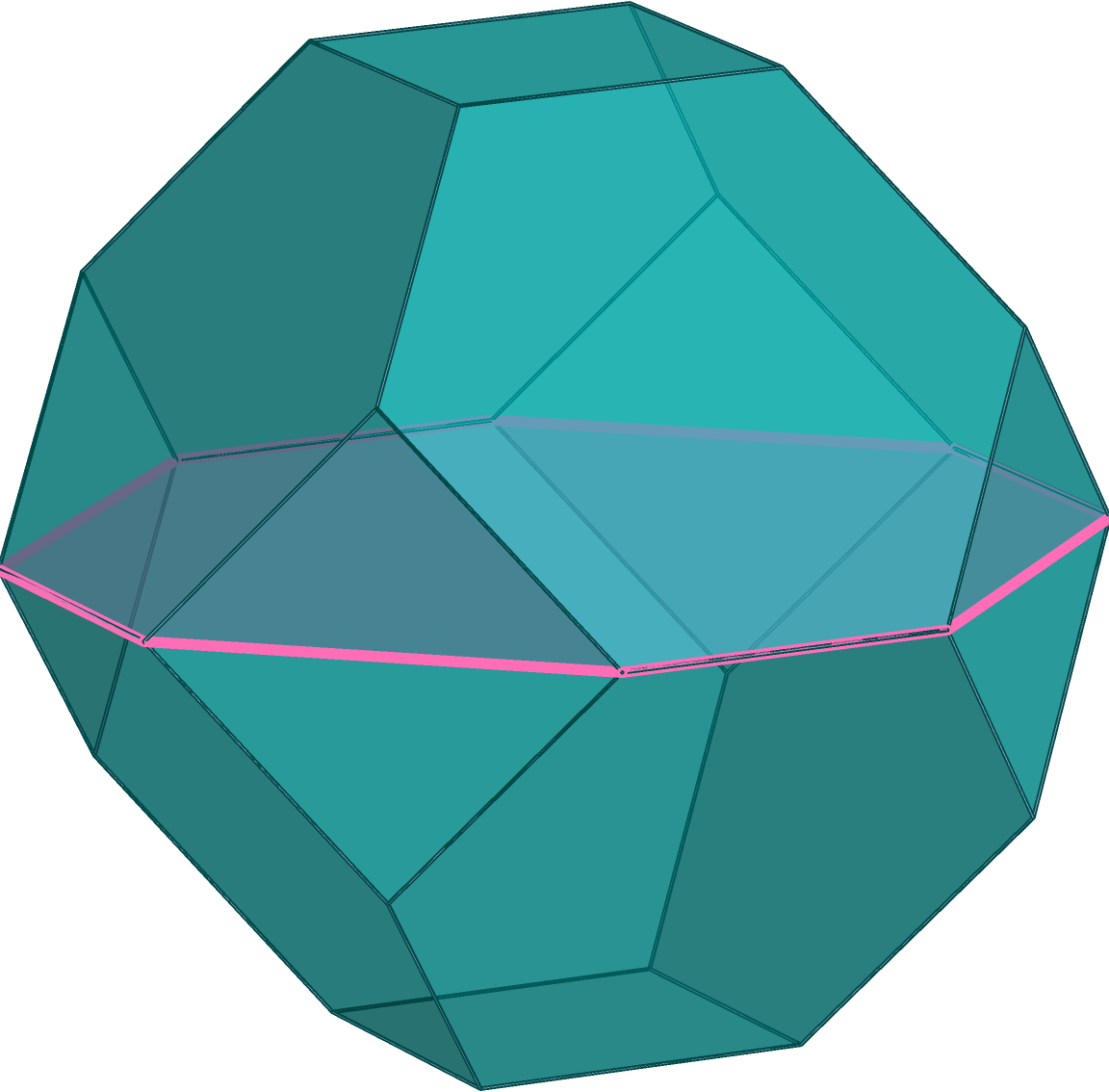}
        \caption{affine section of maximal\\ volume (unique up to symmetry)}
        \label{fig:permu-vol-max}
    \end{subfigure}
    \;
    \begin{subfigure}[t]{0.32\textwidth}
        \includegraphics[width=0.9\textwidth]{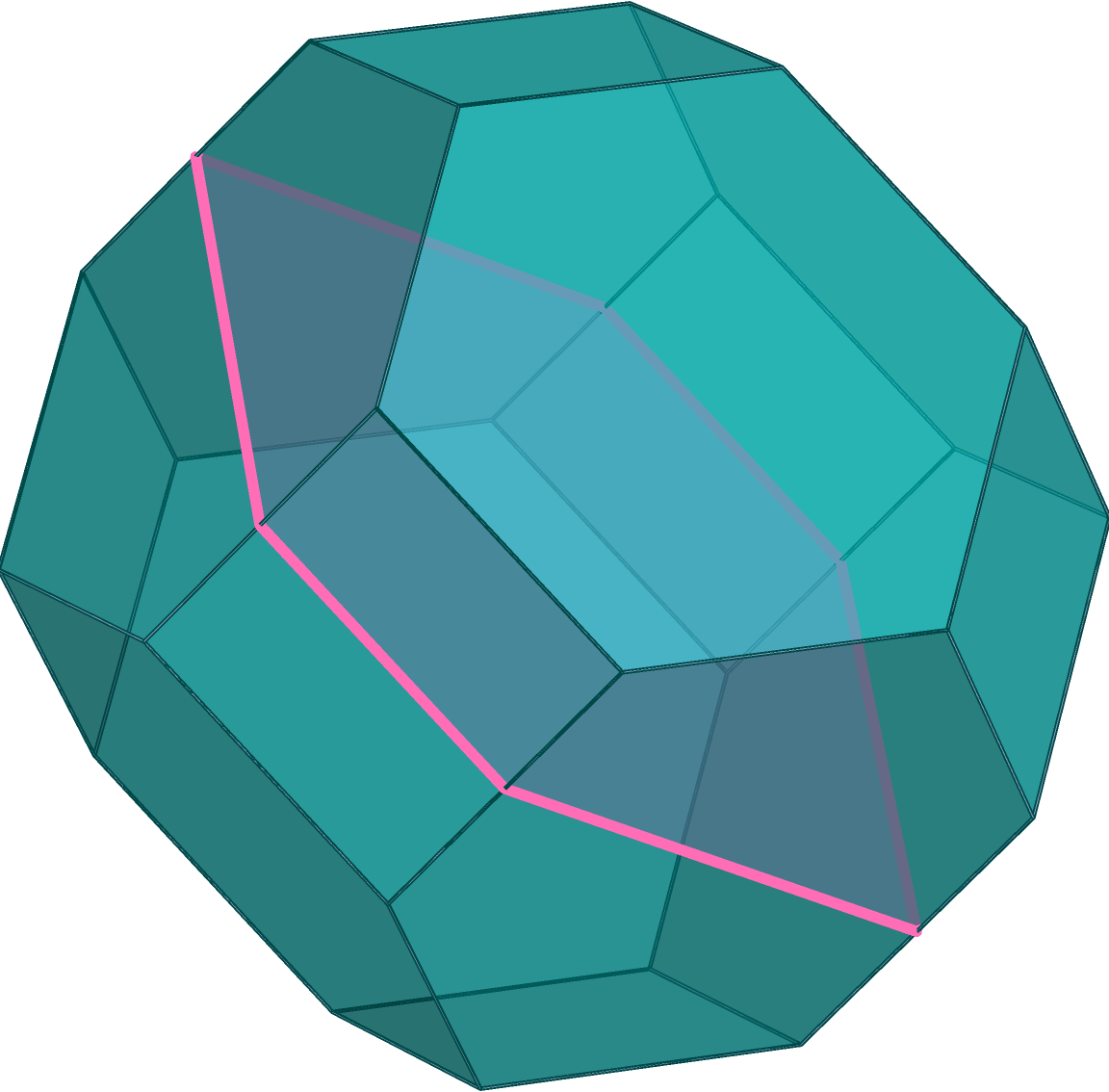}
        \caption{central section of minimal volume (unique up to symmetry)}
        \label{fig:permu-vol-min}
    \end{subfigure}
    \;
    \begin{subfigure}[t]{0.3\textwidth}
    \includegraphics[width=0.9\textwidth]{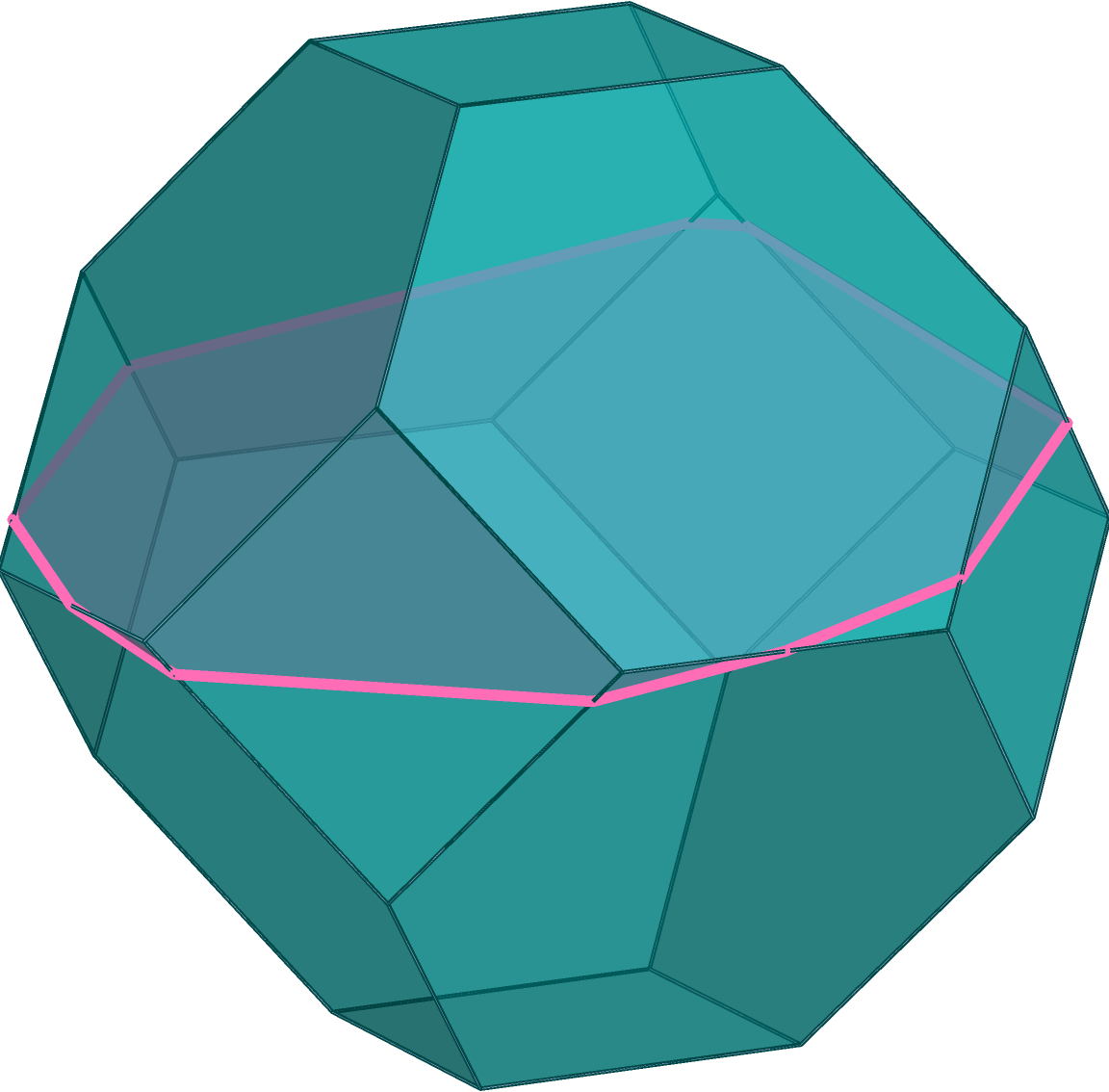}
    \caption{affine section with maximal number of vertices}
    \label{fig:permu-vcs}
    \end{subfigure}
    \caption{Three optimal sections of the permutahedron, the convex hull of permutations of $(1,2,3,4)$, shown in pink. See \Cref{ex:permu}.}
    \label{fig:permu}
\end{figure}

Sections or slices of convex bodies have been the focus of many researchers. To mention just a few highlights, a variety of problems from number theory can be rephrased as questions about volumes of sections or projections of convex bodies. For example, the sharp version of \emph{Siegel's Lemma} is equivalent to an estimate for the minimum volume of a central slice (of arbitrary dimension) of a unit cube \cite{vaaler79,Koldobskybook2005}. 
Similarly, the search for maximal volume slices of cubes, cross-polytopes, 
simplices and other convex bodies comes from applications in functional analysis and probability \cite{Kball1,Kball2,MeyerPajor88,webb,Koenig2021, pournin-scubesections}. 
Moreover, the research around the famous \emph{Busemann-Petty problem} and 
\emph{Bourgain's slicing conjecture} relates the volume of a convex body  to the volume of its sections by hyperplanes through the origin \cite{Gardnerbook,KlaMil22:SlicingProblem,Klartag:LogBoundBourgain,klartagLehec2022bourgains,GiannopoulosKoldobskyZvavitch2023,nayar+tkocz-2022extremalsecproj}.
Slices also play an important role in geometric tomography and inverse moment problems for convex sets  (see \cite[Chapter 8]{Gardnerbook} and \cite{Gravinetal2012-moments,KousholtSchulte2021moments,Walkup68:Simplex5D}). 
From a discrete point of view, the combinatorial analysis of hyperplane sections of polytopes, specially for polyhedral norm balls, has applications in algebraic and enumerative combinatorics \cite{Ardilaetal2021slicingpermutahedron,ChakerianLogothetti91,Khovanskii-slices2006,fukudaMutoGoto97-5cubeslices,Lawrence79,PadrolPfeifle2015}. 

\vskip .1cm
\noindent {\bf Our contributions.} 
The present paper addresses the 
fundamental question in computational convexity of finding optimal or extremal hyperplane sections of a polytope. Most work has concentrated on 
maximizing the volume of central hyperplane sections, while 
affine sections have been less studied and mostly for norm balls \cite{Moodyetal2013,Liu+Tkocz,Koenig2021,nayar+tkocz-2022extremalsecproj,pournin-scubesections}. Our results apply to both cases and seek to do research for more polytopes. 

Our first main contribution describes two different ways to parametrize all affine hyperplane sections of a polytope.
\begin{theorem}\label{thm:intro_structure}
Given a polytope $P \subset \R^d$, there exist two different parametric decompositions of the space 
of all affine hyperplanes in $\R^d$ into finitely many cells, called slicing chambers. Each decomposition is organized by a pair $(\mathcal R, \mathcal C)$ of arrangements (cf. \Cref{table:arrangements-overview}), where 
each region of $\mathcal R$ defines a parametric hyperplane arrangement 
$\mathcal C$. The following holds for slicing chambers in both decompositions:

\begin{enumerate}[label=\textup{(}\roman*\textup{)}]
\item Two affine hyperplanes $H_1,H_2$ belong to the same slicing chamber if and only if $H_1,H_2$ intersect the same set of edges of $P$. In particular, $P\cap H_1$ is combinatorially equivalent to $P \cap H_2$ and they admit the same triangulations.\label{item:comb-info}
\item For fixed dimension $d$, the number of slicing chambers is bounded by a polynomial in the number of vertices of $P$.\label{item:polynomial-size}
\item Restricted to a fixed slicing chamber, the integral $\int_{P\cap H} f(\bx) \dx$ of any polynomial is a rational function, which solely depends on the combinatorial information described in \labelcref{item:comb-info}. In particular, this holds for the volume of $P\cap H$.\label{item:integral-rational}
\end{enumerate}
\end{theorem}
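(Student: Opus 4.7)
The plan is to parametrize the space of affine hyperplanes in $\R^d$ and identify in it a natural hyperplane arrangement whose full-dimensional cells are the slicing chambers. Write each affine hyperplane as $H_{\bu,t}=\{\bx\in\R^d:\langle \bu,\bx\rangle=t\}$, and for every vertex $\bv_i$ of $P$ introduce the \emph{vertex hyperplane} $\mathcal{H}_i=\{(\bu,t):\langle \bu,\bv_i\rangle=t\}$ in the parameter space. The sign vector of $(\bu,t)$ relative to $\{\mathcal{H}_i\}$ records, for each vertex of $P$, on which side of $H_{\bu,t}$ it sits. The two decompositions in the statement arise from two natural ways to stratify the parameter space: a \emph{translational} one, where the normal direction $\bu$ is chosen first (yielding the outer arrangement $\mathcal{R}$ on the sphere of directions) and the offset $t$ then varies along a line (giving the inner arrangement $\mathcal{C}$), and a \emph{radial} one, where a pivot point is fixed first and the direction rotates around it.

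For part~(i), an edge $[\bv_i,\bv_j]$ of $P$ is crossed by $H_{\bu,t}$ precisely when $\langle \bu,\bv_i\rangle-t$ and $\langle \bu,\bv_j\rangle-t$ have opposite signs, so the set of crossed edges is determined by the sign vector and is constant on each slicing chamber. From the cut edges one reconstructs $P\cap H$ combinatorially: its vertices are the intersection points on cut edges, and two of them are joined by an edge iff the corresponding edges of $P$ share a $2$-face. Hence the face lattices of $P\cap H_1$ and $P\cap H_2$ agree, and any combinatorial triangulation of $P\cap H$ obtained from $P$ (for example by pulling or placing its vertices) depends only on the cut edges.

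For part~(ii), each decomposition exhibits the slicing chambers as cells of an arrangement of at most $n=|V(P)|$ hyperplanes in an ambient space of dimension at most $d$ (after factoring out the positive scaling of $(\bu,t)$). By the classical Zaslavsky upper bound, the number of cells is $O(n^d)$, polynomial in $n$ for fixed $d$. For part~(iii), within a fixed chamber each vertex of the slice is the explicit rational function
\[ \bp_{ij}(\bu,t)=\bv_i+\frac{t-\langle \bu,\bv_i\rangle}{\langle \bu,\bv_j-\bv_i\rangle}\,(\bv_j-\bv_i), \]
whose denominator does not vanish on the chamber. Because the combinatorial type of $P\cap H$ is constant, a combinatorial triangulation of the slice into simplices with vertices among the $\bp_{ij}$ can be fixed throughout the chamber, and the integral of a polynomial over such a simplex is a polynomial in the vertex coordinates divided by a Jacobian-type factor (via the standard Dirichlet integral on the standard simplex, or via Brion/Lasserre-type identities). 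Summing over simplices yields a rational function in $(\bu,t)$ that depends only on the combinatorial data; the volume corresponds to $f\equiv 1$.

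The main obstacle is coordinating the outer and inner arrangements in each decomposition: one must verify that the inner arrangement $\mathcal{C}$ varies consistently as the outer region of $\mathcal{R}$ moves, so that ``chambers of $(\mathcal{R},\mathcal{C})$'' are honest cells of a well-defined decomposition and not artefacts of a bad stratification. A related subtle point is the treatment of degenerate hyperplanes passing through vertices of $P$, and ensuring that the combinatorial triangulation chosen in part~(iii) can be made uniform across the chamber so that the resulting rational function admits a common denominator determined purely by the combinatorics of the cut edges.
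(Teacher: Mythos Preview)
Your arguments for parts~(i) and~(iii) match the paper's: the sign vector of the vertices relative to $H$ determines the set of cut edges and hence the combinatorial type of $P\cap H$; the slice vertices are explicit rational functions of the parameters with denominators $\langle \bu,\bv_j-\bv_i\rangle$; and a fixed combinatorial triangulation combined with the Lasserre--Baldoni simplex integration formulas (\Cref{lem:integrals_linear_power_exp,lem:monomial_to_linear_power}) yields the rational function. That part is fine.

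The genuine gap is that you never construct the two pairs $(\mathcal{R},\mathcal{C})$ the theorem is about. The statement refers to the specific arrangements in \Cref{table:arrangements-overview}: the outer arrangement is the sweep arrangement $\regtra(P)=\{(\bv_i-\bv_j)^\perp\}$ with $\binom{n}{2}$ hyperplanes in the translational case, and the cocircuit arrangement $\regrad(P)$ with $\binom{n}{d}$ hyperplanes in the rotational case. Your count in~(ii) --- ``an arrangement of at most $n$ hyperplanes in an ambient space of dimension at most $d$'' --- applies to your single vertex arrangement $\{\mathcal{H}_i\}$, not to these pairs. In fact the pair decompositions are strictly \emph{finer} than your vertex arrangement: within a fixed sign pattern (a cell of $\{\mathcal{H}_i\}$), the direction $\bu$ can still cross walls of the sweep arrangement, because the order of the values $\langle \bu,\bv_i\rangle$ among same-sign vertices can change, splitting one vertex-arrangement cell into several slicing chambers. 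So your $O(n^d)$ does not bound the number of slicing chambers in the theorem's sense; the paper obtains $O(n^{2d+1}2^d)$ and $O(n^{d^2+d}2^{2d})$ by applying Zaslavsky separately to $\mathcal{R}$ and to $\mathcal{C}$ and multiplying (\Cref{prop:countingchamberscocircuit_intersectionbody,prop:countingchamberssweep_monotone}). Your vertex arrangement is a legitimate alternative decomposition with a sharper bound, but it is a single decomposition, not the two pair constructions the theorem asserts; to prove the theorem as stated you must explicitly define $\regtra$ and $\regrad$, verify that each of their regions determines a parametric inner arrangement whose chambers have constant cut-edge sets, and count accordingly --- this is the content of \Cref{lemma:subdivision_of_sphere,lemma:hyparr_translation,th:cocircuit-arrangement,lem:sweep-arrangement}.
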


\begin{table}[ht]
\centering
 {\def\arraystretch{1.5}
\begin{tabular}{c|c|c|c|c}
 & \textbf{Hyperplane Arrangement} & \textbf{Notation} & \textbf{Proofs in}  & \textbf{Reference Object} \\ \hline 
 \multirow{2}{*}{$\circlearrowleft$} &  central arrangement   & $\chamrad$             & \Cref{sec:rotational_slices}                   & intersection body \\
 & cocircuit arrangement & $\regrad$ & \Cref{sec:translating-the-rotation} & oriented matroid \\
 \hline
  \multirow{2}{*}{\rotatebox{-15}{$\small\uparrow$}} &  parallel arrangement   & $\chamtra$        & \Cref{sec:translational_slices}  & fiber polytope   \\
  & sweep arrangement  & $\regtra$ & \Cref{sec:rotating-the-translation} & sweep polytope         \\
\end{tabular}
}
\caption{An overview of the two pairs of hyperplane arrangements in this paper.}
\label{table:arrangements-overview}
\end{table}

We note the relevant prior work in the direction of \Cref{thm:intro_structure}. 
The central hyperplane arrangement in \Cref{table:arrangements-overview} already appeared in the 1970's in  \cite{Johnson+Preparata1978}, for an algorithm which finds a halfspace containing the maximum number of the points placed on the unit-sphere. Here, the halfspaces are bounded by central hyperplanes containing the origin.
Later, again in a central hyperplane setting, Filliman presented a decomposition of the Grassmannian of hyperplanes into cells, where rational function formulas for volumes of sections of central hyperplanes hold \cite{Filliman92}. However, his formulas are not suitable for general affine sections 
of polytopes.
More recently, \cite{BBMS:IntersectionBodiesPolytopes} used one of the decompositions we present in this paper to compute intersection bodies of polytopes.
In contrast, our techniques extend earlier work and are valid for \emph{all} affine sections. 
Using the structure of \Cref{thm:intro_structure}, we obtain formulas for the integral of polynomials (and hence the volume) over hyperplane sections or halfspace sections that are nonsingular rational functions in each slicing chamber. Our formulas rely on the integration formulas from \cite{baldoni11_howintegratepolynomial,lasserre01_multidimensionalversion}.

The slicing chambers are nicely organized. Each row of \Cref{table:arrangements-overview} points to the section which explains how to build them.
Our first slicing chamber decomposition is organized in terms of the \emph{cocircuit arrangement} generated by all hyperplanes spanned by sets of $d-1$ vertices of $P$. For each region of the cocircuit arrangement we identify a vector $\bt$ that we use to translate $P$, and from it we obtain a new central hyperplane for each vertex $\bv+\bt$ of the translated polytope $P+\bt$. This first approach was used in the study of intersection bodies of polytopes \cite{BraMer:IntersectionBodies}.
The second slicing chamber decomposition is organized differently.
This time we consider the regions of the \emph{sweep arrangement}, dual to the \emph{sweep polytope} \cite{PadrolPhilippe2021}. Each point $\bu$ in a region of the sweep arrangement identifies a direction. We then decompose $P$ into blocks defined by translations of $\bu^\perp$, while maintaining the combinatorics. These slabs are our second type of slicing chambers. They 
are the same pieces used to compute the 
\emph{monotone path polytope}
of $P$ \cite{BilleraKapranovSturmfelsMPPs1994}, a special instance of fiber polytopes \cite{BilStu:FiberPolytopes}. 

There are many applications of \Cref{thm:intro_structure}. Regarding combinatorial 
applications, we can use slicing chambers to bound the number of combinatorial types of slices. This is an interesting but hard problem. For instance, we do not even know all combinatorial types of sections for regular cubes of dimension greater than five \cite{fukudaMutoGoto97-5cubeslices,Lawrence79}. 
For each slicing chamber of both our arrangements the combinatorial type of hyperplane sections is fixed.
Thus, as a second main contribution, we recover an upper bound on the number of different combinatorial types of sections by affine hyperplanes of the polytope $P$.

\begin{theorem} \label{thm:intro-combinatorialtypes}
For $d$-dimensional polytopes with $n$ vertices, an upper bound on the 
number of combinatorial types of hyperplane sections is $O(n^{2d+1} 2^{d})$. 
\end{theorem}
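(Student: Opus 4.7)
The strategy is to combine part (i) of \Cref{thm:intro_structure} with a direct enumeration of slicing chambers. Since every pair of hyperplanes in a common slicing chamber cuts the same edges of $P$ and hence yields combinatorially equivalent sections, the number of combinatorial types of hyperplane sections is at most the total number of slicing chambers, for either of the two decompositions constructed in \Cref{thm:intro_structure}. It therefore suffices to bound the cardinality of one such decomposition.

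I would work with the sweep--parallel decomposition $(\regtra, \chamtra)$, since its outer arrangement has only $\binom{n}{2}$ hyperplanes, which scales better in $n$ than the $\binom{n}{d-1}$ hyperplanes of the cocircuit arrangement once $d\geq 3$. The outer arrangement $\regtra$ consists of the central hyperplanes $(\bv_i-\bv_j)^\perp$ in $\R^d$, one per pair of vertices of $P$. Applied to this arrangement, the elementary bound
\[
   \#\text{regions} \;\leq\; \sum_{k=0}^{d}\binom{m}{k} \;\leq\; 2^d \binom{m}{d}
\]
with $m=\binom{n}{2}\leq n^2$ yields at most $O\bigl(2^d\binom{n}{2}^d\bigr)=O(n^{2d}\,2^d)$ regions.

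Within each region $R$ of $\regtra$ one fixes a direction $\bu\in R$ and considers the inner arrangement $\chamtra$: this is the one-parameter family of parallel hyperplanes $\{\bx\in\R^d:\bu\cdot\bx=\bu\cdot\bv_i\}$, one per vertex of $P$, which partitions the offset line into $n+1$ slabs. Thus each region of $\regtra$ carries at most $n+1$ slicing chambers. Multiplying the two counts yields at most $O(n^{2d+1}\,2^d)$ slicing chambers in total, and hence the same upper bound on the number of combinatorial types of hyperplane sections.

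The main obstacle here is conceptual rather than computational: one must verify that every transition in the combinatorial type of $P\cap H$, as the hyperplane $H$ varies, is indeed captured by crossing some hyperplane of $\regtra$ or of the inner parallel arrangement. This is precisely the content of part (i) of \Cref{thm:intro_structure}, so it can be cited directly. A minor technical point is that the arrangement bound above is stated for general position; degenerate configurations of vertices (parallel edges, coplanar $d$-tuples, etc.) only merge cells and therefore preserve the upper bound.
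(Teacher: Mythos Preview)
Your approach is exactly the one the paper takes: bound the slicing chambers in the sweep--parallel decomposition $(\regtra,\chamtra)$ via Zaslavsky's region count and then multiply by the $O(n)$ slabs of the inner parallel arrangement. The final asymptotic $O(n^{2d+1}2^d)$ is correct.

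There is one small logical slip worth fixing. You count only the \emph{open} maximal regions of $\regtra$ and the open slabs of $\chamtra$, and then appeal to \Cref{thm:intro_structure}\ref{item:comb-info} to say this bounds the number of combinatorial types. But a hyperplane that passes through a vertex of $P$, or whose normal lies on a wall of $\regtra$, may produce a section of a combinatorial type that does not occur in any neighbouring open chamber; such hyperplanes live on lower-dimensional cells of the arrangements and are not covered by your count. The paper deals with this by counting \emph{all} faces of the sweep arrangement via the inequality $f_k\le\binom{d}{k}f_d$ and summing over $k$, which is where its factor $2^d$ actually comes from (\Cref{prop:countingchamberssweep_monotone}). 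In your write-up the $2^d$ arises instead from the loose estimate $\sum_{k\le d}\binom{m}{k}\le 2^d\binom{m}{d}$ on the number of \emph{open} regions, which is unnecessary there (that sum is already $O(n^{2d})$). The two $2^d$'s happen to give the same final bound, but for the argument to be airtight you should count lower-dimensional cells explicitly, as the paper does, rather than rely on this coincidence.
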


We stress that using \Cref{thm:intro_structure} we can compute not only a 
bound for the exact number of combinatorial types of slices, but we also get an algorithm to list them for specific polytopes.
Our third main contribution, which relies again on \Cref{thm:intro_structure}, is a family of algorithms to find optimal affine hyperplane sections, halfspace sections, and projections of polytopes. 

\begin{theorem}\label{thm:intro_algorithm}
Let $P \subset \R^d$ be a polytope and $f(\bx)$ a polynomial in $\mathbb{Q}[x_1,\dots,x_d]$. Denote by $f_k(P)$ the number of $k$-dimensional faces of $P$ and let $w_{k+1}$ be a weight function defined on all $(k+1)$-dimensional faces $F$ of $P$. Let $H$ be an affine hyperplane, $H^+_0$ denote a halfspace defined by a central hyperplane, and $\pi_H$ denote the projection of $P$ in the direction orthogonal to $H$. 
We give algorithms to find an optimal solution for the following problems:
\begin{enumerate}[label=\textup{(}\roman*\textup{)}] 
\item\label{mainthm:section_vol_int} (section of maximum volume/integral) \hspace{0.6em} $\displaystyle \max_{H \subset \R^d} \ \vol(P \cap H), \qquad \ \max_{H \subset \R^d} \ \int_{P \cap H} f(\bx) \dx$.
\vspace{0.5em}
\item (optimal number of $k$-dimensional faces) $ \displaystyle \max_{H \subset \R^d} \ f_k(P \cap H),$ \hspace{1.6em} $ \displaystyle\max_{H^+_0 \subset \R^d} \ f_k(P \cap H^+_0).$ \label{mainthm:sect_k_faces}
\vspace{0.3em}
\item\label{mainthm:weighted_k_faces} (optimal weighted $k$-dimensional faces) \hspace{0.4em} 
$ \displaystyle \max_{H \subset \R^d} \sum_{\substack{F \subset P \\ F\cap H \neq \emptyset}} w_{k+1}(F), \;\: \min_{H \subset \R^d} \sum_{\substack{F \subset P \\ F\cap H \neq \emptyset}} w_{k+1}(F)$.

\item (central halfspace of optimal integral)\hspace{1.3em} $\displaystyle \max_{H^+_0 \subset \R^d} \ \int_{P \cap H^+_0} \!f(\bx) \dx,  \,  \min_{H^+_0 \subset \R^d} \ \int_{P \cap H^+_0} \!f(\bx) \dx.$ \label{mainthm:halfspaces_int}

\item (projections of optimal integral) \hspace{3.5em} $\ \displaystyle \max_{H \subset \R^d}  \int_{\pi_H (P)} f(\bx) \dx, \quad \, \min_{H \subset \R^d}  \int_{\pi_H (P)} f(\bx) \dx.$  \label{mainthm:projection}
\end{enumerate}
If $P$ is a rational polytope and the dimension $d$ is fixed, then all these problems can be solved in polynomial time.
\end{theorem}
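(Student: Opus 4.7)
The plan is to use \Cref{thm:intro_structure} as a divide-and-conquer scaffold: enumerate the slicing chambers of $P$ coming from one of the pairs in \Cref{table:arrangements-overview}, solve a local version of each optimization problem on every chamber, and return the best value found. By \Cref{thm:intro_structure}\labelcref{item:polynomial-size} the total number of chambers is polynomial in $n$ for fixed $d$, and each chamber is a polyhedron cut out by a polynomial number of hyperplanes, so the enumeration itself can be produced by a standard incremental construction of hyperplane arrangements in fixed dimension. For part~\labelcref{mainthm:projection} we additionally observe that $\pi_H(P)$ depends only on the direction normal to $H$, so we restrict attention to the rotational decomposition $(\regrad, \chamrad)$, on each cell of which the combinatorial type of the projection is fixed.

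The combinatorial parts~\labelcref{mainthm:sect_k_faces} and~\labelcref{mainthm:weighted_k_faces} then reduce immediately: by \Cref{thm:intro_structure}\labelcref{item:comb-info} the combinatorial type of $P \cap H$ is constant on each chamber, and so is the set of edges (and hence of arbitrary faces) of $P$ that are cut by $H$. Consequently $f_k(P \cap H)$ and the weighted sum $\sum_{F \cap H \neq \emptyset} w_{k+1}(F)$ are chamber-constants which can be evaluated once from a single representative hyperplane in the chamber (e.g.\ via one convex-hull call on the cut vertices), and the outer sweep over chambers picks the optimum. For the halfspace variant of~\labelcref{mainthm:sect_k_faces}, the chamber data in $\chamrad$ additionally records which vertices of $P$ lie on each side of a central hyperplane, so $f_k(P \cap H^+_0)$ is again a chamber-constant.

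For the metric parts~\labelcref{mainthm:section_vol_int}, \labelcref{mainthm:halfspaces_int}, and~\labelcref{mainthm:projection}, \Cref{thm:intro_structure}\labelcref{item:integral-rational} tells us that on each chamber the target functional is an explicit rational function of the hyperplane parameters, expressible via the Brion--Lasserre--Baldoni integration formulas of \cite{lasserre01_multidimensionalversion,baldoni11_howintegratepolynomial}. The local subproblem is therefore to optimize a rational function of bounded algebraic degree over a polyhedron described by polynomially many linear inequalities in $d$ variables. In fixed dimension this can be done in polynomial time by standard tools from computational real algebraic geometry, for instance the critical-points method or Renegar's quantifier elimination over the reals, since both the number of variables and the algebraic degree of the objective are fixed. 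The main obstacle I expect is the bit-complexity bookkeeping: one has to verify that the rational functions produced by the integration formulas have coefficients of polynomial bit-length in the input size, and that the chamber-wise triangulations required by those formulas can be generated on the fly within the same budget. Once this accounting is done, combining the chamber enumeration with the local optimizer yields an algorithm polynomial in $n$ for each fixed $d$.
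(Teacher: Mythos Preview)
Your overall architecture matches the paper's: enumerate slicing chambers, observe that the combinatorial data is constant and the metric data is rational on each chamber, then optimize locally using real-algebraic tools and take the best. The paper carries this out via \Cref{prop:polytime_kfaces} for~\labelcref{mainthm:sect_k_faces}--\labelcref{mainthm:weighted_k_faces}, and via \Cref{prop:one cellopt} together with \Cref{lemma:algo_Basu} (the Basu--Pollack--Roy optimization algorithm, which is exactly the critical-points method you allude to) for~\labelcref{mainthm:section_vol_int}. Two points need correcting, however.

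First, two small inaccuracies in the metric case. The local domain is \emph{not} a polyhedron: you must impose $\|\bu\|=1$, because the volume formula $\vol(\Delta)=\tfrac{1}{(d-1)!}|\det M_\Delta|$ in the proof of \Cref{thm:integral_polynomial_over_P} uses $\bu$ as the last row and is therefore not scale-invariant. The paper handles this by adding the quadratic constraint $\sum u_i^2=1$ to the semialgebraic set $S_C$ in \Cref{prop:one cellopt}. Likewise, the algebraic degree of the objective is \emph{not} fixed: by \Cref{prop:deg_pieces_translation} it grows like $(f_1(P)-d+1)(D+d-1)$, hence polynomially in the input. Neither error is fatal, since \Cref{lemma:algo_Basu} has complexity $s^{2k+1}\delta^{O(k)}$, polynomial in both $s$ and $\delta$ once the number of variables $k$ is fixed; but your stated justification (``both the number of variables and the algebraic degree are fixed'') is wrong and should be replaced by ``the number of variables is fixed and the degree is polynomially bounded''.

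Second, and more substantively, your treatment of~\labelcref{mainthm:projection} uses the wrong arrangement. The combinatorial type of $\pi_\bu(P)$ is \emph{not} constant on chambers of $\chamrad(P)$; those chambers record only the signs of $\langle \bu,\bv\rangle$ for vertices $\bv$ of $P$, which do not determine which vertices survive projection. The correct arrangement is $\chamrad(P^\circ)$, as in \Cref{lemma:hyparr_projection}: via the identity $\pi_\bu(P)=(P^\circ\cap\bu^\perp)^\circ$, the chambers of $\chamrad(P^\circ)$ fix the combinatorics of $P^\circ\cap\bu^\perp$ and hence of its polar. So for part~\labelcref{mainthm:projection} you must first compute $P^\circ$ (polynomial time in fixed dimension) and then run the rotational decomposition on $P^\circ$ rather than on $P$; see \Cref{thm:integral_polynomial_projection} and the proof at the end of \Cref{sec:polytime_complexity}.
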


As the number of items in \Cref{thm:intro_algorithm} suggests, there are many possible applications of \Cref{thm:intro_structure}, both for combinatorial as well as metric criteria.
Maximal combinatorial slices of polytopes are of interest in the context of algebraic and topological 
combinatorics. For instance, a variation of the \emph{upper bound theorem}
for polytopes is to find an upper bound for $f$-vectors of slices of a polytope. Khovanskii investigated this problem and asked to compare the $h$-vector of a section of the polytope by a generic affine plane of  dimension $l$ and the $h$-vector of the original 
polytope \cite[Section 7]{Khovanskii-slices2006}.
The volume of special slices of the permutahedron fixed by a permutation was analyzed in \cite{Ardilaetal2021slicingpermutahedron}, and it turns out to agree with the slice depicted in \Cref{fig:permu-vol-min} for certain cases.

Optimal combinatorial halfspace sections in computational geometry have been studied for
the maximization of the number of vertices on the sphere \cite{Johnson+Preparata1978}. 
Moreover, there are interesting applications regarding optimization of volumes and integrals of hyperplane sections (e.g., moments \cite{Gravinetal2012-moments,KousholtSchulte2021moments}). It is very difficult to find the slices which have maximal or minimal (through a given point) volume, even for most basic polytopes. 
For instance, the affine hyperplane section of maximum volume has been identified for the $d$-dimensional cube \cite{Kball1} and the cross-polytope \cite{MeyerPajor88,Koldobskybook2005}; an analogous result for the simplex concerns hyperplanes through the centroid \cite{webb}. We hope our algorithms will add new information. 

Our fourth main contribution is experimental.
As indicated in \Cref{thm:intro_algorithm}, our algorithms work well for 
many optimality criteria, both concerning combinatorial and metric properties.
Applying the algorithm from \Cref{thm:intro_algorithm} we computed the optimal 
slices of famous polytopes in low dimensions, such as the Platonic solids, 
the permutahedron, and the cross-polytope.
This is an interesting result, since very little is known about optimal 
slices or combinatorial types of specific polytopes \cite{Kball1,Chakerian+Filliman86,Lawrence79,Ardilaetal2021slicingpermutahedron,nayar+tkocz-2022extremalsecproj}.

\noindent {\bf Overview:}
We begin in \Cref{sec:rotational_slices} by reviewing central hyperplane sections, 
and show that the integral of a polynomial over all such sections 
is a piecewise rational function. In \Cref{sec:translational_slices} we show 
the analogue for parallel sections in a fixed direction, and for orthogonal projections in \Cref{section:projections}. We merge these results in \Cref{sec:mergingslices} to arbitrary 
affine hyperplane sections. The proofs of \Cref{thm:intro_structure,thm:intro-combinatorialtypes} are the main content of \Cref{sec:mergingslices}. 

Algorithmic results are proved in \Cref{sec:complexity}. The proof of \Cref{thm:intro_algorithm} is in \Cref{sec:polytime_complexity}. While our algorithm runs in polynomial time for rational polytopes and polynomials in fixed dimension, the problems are hard in non-fixed dimension (see \Cref{subsec:hardness}). We close with our experimental results in \Cref{section:applications}. We provide the maximal volume slices for all Platonic solids, we investigate the $3$-dimensional permutahedron for different optimality criteria, and we present lists of all combinatorial types of slices of the cross-polytope of dimensions 4 and 5.
In this paper we use notions from polyhedral combinatorics and computational 
convex geometry (see \cite{GrunbaumBook67,ZieglerBook,grandpabibleI-1994,grandpabible2-1994,grandpabible3-1997}).

\section{Sections and Projections}

We begin our study of hyperplane sections and projections of a polytope. We analyze two families of hyperplane sections: rotational and translational ones. By rotational or central slices we mean hyperplanes that pass through a common point, which we assume to be the origin. On the other hand, translational or parallel slices are parallel affine hyperplane sections with a common normal vector. These two points of view mirror two standard constructions in convex geometry, namely intersection bodies and monotone path polytopes, respectively. 
In both settings, the combinatorial type of the hyperplane section $P\cap H$ is governed by a hyperplane arrangement $\mathcal{C}$, which is either central or parallel. In the open chambers of the hyperplane arrangements, one can then integrate polynomials over the slices parametrically, as rational functions. This is the main result of this section, stated in \Cref{thm:integral_polynomial_over_P,thm:parallel_sections}.
A similar situation arises when looking at projections. We describe the associated hyperplane arrangement, exploiting the duality with intersections, and prove an analogous result for the integral over the projections in \Cref{thm:integral_polynomial_projection}.
Notice that we state our theorems for full-dimensional polytopes. Analogous results can be stated and proved for lower-dimensional polytopes by considering them inside their affine span.

\subsection{Rotational slices}\label{sec:rotational_slices}

We discuss now rotational slices of a fixed polytope $P \subset \R^d$ which are obtained by hyperplanes passing through the origin with normal vector $\bu \in S^{d-1}$. We show that given a polynomial $f$, the integral of  $f$ over these sections is a piecewise rational function in variables $u_1,\dots,u_d$.
More specifically, the hyperplanes we consider in this section are of the form
\[
\{ \bx\in \R^d \mid \langle \bu, \bx \rangle = 0\}, \text{ where } \bu \in S^{d-1}.
\]
Understanding the volume of central hyperplane sections is a crucial step in the construction of the \emph{intersection body} of $P$ \cite{Lutwak:IntersectionBodies}.
Indeed, in this section we make use of results from \cite{BBMS:IntersectionBodiesPolytopes}, which studies intersection bodies of polytopes. A key argument is the existence of a central hyperplane arrangement, as follows.

\begin{lemma}[{\cite[Lemma 2.4]{BBMS:IntersectionBodiesPolytopes}}]\label{lemma:subdivision_of_sphere}
Let $P$ be a full-dimensional polytope in $\R^d$ and consider the central hyperplane arrangement 
\[
\chamrad(P) = \{ \bv^\perp \mid \bv \text{ is a vertex of $P$ and not the origin} \},
\]
where $\bv^\perp = \{\bx \in \R^d \mid \langle \bx, \bv \rangle = 0 \}$ denotes the central hyperplane with normal vector $\bv$.
The maximal open chambers $C$ of $\chamrad(P)$ satisfy the following property:
For all $\bx \in C$, the hyperplane $\bx^\perp$ intersects a fixed set of edges of $P$. In particular, the polytopes $Q = P \cap \bx^\perp$ are of the same combinatorial type for all $\bx \in C$. 
\end{lemma}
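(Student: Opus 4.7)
The plan is to reduce everything to the sign vector $\sigma(\bx) = (\sgn\langle \bx, \bv\rangle)_{\bv}$, indexed over the nonzero vertices of $P$, and to show that $\sigma$ is constant on each open chamber of $\chamrad(P)$. By the very definition of the arrangement, a point $\bx$ lies on $\bv^\perp$ precisely when $\langle \bx, \bv\rangle = 0$, so in any open chamber $C$ of $\chamrad(P)$ we have $\langle \bx, \bv\rangle \neq 0$ for every $\bx \in C$ and every nonzero vertex $\bv$ of $P$. Hence $\bx \mapsto \sgn\langle \bx, \bv\rangle$ is a nowhere-zero continuous integer-valued function on the connected set $C$, and is therefore constant; consequently $\sigma(\bx)$ is constant on $C$.

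The statement about edges is then immediate: an edge $[\bv_1, \bv_2]$ of $P$ is crossed by the central hyperplane $\bx^\perp$ exactly when $\langle \bx, \bv_1\rangle$ and $\langle \bx, \bv_2\rangle$ have opposite signs (in an open chamber neither vanishes, so whenever an intersection exists it lies strictly in the relative interior of the edge). Since both signs are read off from $\sigma(\bx)$, the set of edges crossed by $\bx^\perp$ depends only on the chamber $C$, not on the choice of $\bx \in C$.

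For the combinatorial equivalence of the slices $Q_\bx = P \cap \bx^\perp$, the idea is that the whole face lattice of $Q_\bx$ is encoded by $\sigma(\bx)$: every nonempty face of $Q_\bx$ has the form $F \cap \bx^\perp$ for a unique face $F$ of $P$, and in the open-chamber case this intersection is nonempty exactly when the vertices of $F$ carry both signs under $\langle \bx,\cdot\rangle$, with inclusion relations among such faces also determined combinatorially from $\sigma(\bx)$. To upgrade this combinatorial bijection between the face lattices of $Q_\bx$ and $Q_{\bx'}$ into one that preserves dimensions, I would add a continuity argument: along any path from $\bx$ to $\bx'$ inside $C$, the vertices of $Q_{\bx(t)}$ move continuously along the fixed set of crossed edges of $P$, and no edge ever enters or leaves $\bx(t)^\perp$, so $Q_{\bx(t)}$ deforms through combinatorially equivalent polytopes. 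The main subtle step is justifying this ``no combinatorial change under perturbation'' claim; it can be handled either by the oriented-matroid description of the face lattice of a hyperplane section, or by writing down an explicit affine isomorphism between $Q_\bx$ and $Q_{\bx'}$ using the sign data $\sigma$, and it is the only part of the argument beyond bookkeeping.
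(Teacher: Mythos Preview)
The paper does not give its own proof of this lemma; it is imported from \cite{BBMS:IntersectionBodiesPolytopes} without argument. Your sign-vector approach is the natural one, and it is exactly the idea behind the paper's proof of the translational analogue, \Cref{lemma:hyparr_translation}: an edge $\conv(\bv_1,\bv_2)$ is cut by the hyperplane if and only if $\langle \bu,\bv_1\rangle$ and $\langle \bu,\bv_2\rangle$ lie on opposite sides of the threshold, and this condition is constant on a chamber.

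Regarding what you flag as the subtle step: it is less delicate than you suggest, and no continuity or perturbation argument is needed. Once $\sigma(\bx)$ is fixed, the collection of faces $F$ of $P$ that meet $\bx^\perp$ is fixed (namely those whose vertex set carries both signs). Since in an open chamber no nonzero vertex lies on $\bx^\perp$, each such $F$ satisfies $\dim(F\cap\bx^\perp)=\dim F-1$, and the assignment $F\mapsto F\cap\bx^\perp$ is an order-preserving bijection from this fixed subposet of the face lattice of $P$ onto the nonempty faces of $Q_\bx$. Hence the face lattice of $Q_\bx$ is read off directly from $\sigma(\bx)$, and combinatorial equivalence follows immediately. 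Your continuous-deformation picture is correct but is more machinery than the statement requires.
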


A (maximal, open) slicing chamber, or simply \emph{chamber}, of $\chamrad(P)$ is a connected component of $\R^d \setminus \chamrad(P)$.
In order to simplify the notation, we write $C\subset\chamrad(P)$ when $C$ is a maximal chamber of the hyperplane arrangement $\chamrad(P)$.
We illustrate the above statement on a $2$-dimensional example. This will serve as a running example which we develop throughout the article to illustrate the main concepts and constructions. 

\begin{example}\label{ex:chambers-radial}
    Consider the pentagon $P = \conv( (-1,-1), (1,-1), (1,1), (0,2), (-1,1) ) \subset \R^2$. Any generic hyperplane through the origin intersects $P$ in a pair of edges of $P$. There are $6$ different such pairs, and the normal vectors of all hyperplanes intersecting a fixed pair forms a maximal open chamber of the hyperplane arrangement 
    \[
        \chamrad(P) = (1,1)^\perp \cup (1,-1)^\perp \cup (0,2)^\perp,
    \]
    as depicted in \Cref{fig:chambers-radial}, left. This hyperplane arrangement only consists of three distinct hyperplanes, since $(-1,-1)^\perp = (1,1)^\perp$ and $(1,-1)^\perp = (-1,1)^\perp$. An arrangement with six distinct hyperplanes is obtained, e.g., for $P \!+\! \bt$ with $\bt = -(\tfrac{1}{3},\tfrac{1}{2})$, in \Cref{fig:chambers-radial}, right.
    \begin{figure}[ht]
        \centering
        \begin{tikzpicture}
    \draw[thick] (-1,-1) -- (1,-1) -- (1,1) -- (0,2) -- (-1,1) -- (-1,-1)  ;
    \filldraw (0,0) circle (1.5 pt);
\end{tikzpicture}
\hspace*{1em}
\begin{tikzpicture}
    \draw[thick] (-1,-1) -- (1,1) ;
    \draw[thick] (1,-1) -- (-1,1) ;
    \draw[thick] (-1,0) -- (1,0) ;
\end{tikzpicture} 
\hspace*{6em}
\begin{tikzpicture}
    \draw[thick] (-1,-1) -- (1,-1) -- (1,1) -- (0,2) -- (-1,1) -- (-1,-1)  ;
    \filldraw (1/3,1/2) circle (1.5 pt);
\end{tikzpicture}
\hspace*{1em}
\begin{tikzpicture}
    \draw[thick] (1, -8/9) -- (-1, 8/9) ;
    \draw[thick] (3/8, 1) -- (-3/8, -1) ;
    \draw[thick] (1, 2/9) -- (-1, -2/9) ;
    \draw[thick] (1, 4/9) -- (-1, -4/9) ;
    \draw[thick] (-3/4, 1) -- (3/4, -1) ;
\end{tikzpicture}
        \caption{Left: The polytope $P$ and the central hyperplane arrangement $\chamrad(P)$ from \Cref{ex:chambers-radial}. Right: $P+\bt$ and $\chamrad(P+\bt)$ for $\bt = -(\tfrac{1}{3},\tfrac{1}{2})$.}
        \label{fig:chambers-radial}
    \end{figure}
\end{example}

We are interested in integrating polynomials over the hyperplane sections of the polytope. This is a generalization of a volume computation, namely the integral of the constant function $1$. For this purpose, we will need the following lemma which provides a recipe to efficiently integrate powers of linear forms over simplices via rational function formulas.

\begin{lemma}[{\cite[Theorem 2.1]{lasserre01_multidimensionalversion}},{\cite[Remark 9]{baldoni11_howintegratepolynomial}}]\label{lem:integrals_linear_power_exp}
Let $\Delta = \conv(\bs_1,\dots,\bs_{n}) \subset \R^d$ be a simplex, let~$\bp \in \R^d$ and $D \in \Z_{\geq 0}$. Then, writing $|\bk|=\sum_{j=1}^{n} k_j$, we have 
\begin{equation*}
\int_\Delta \langle \bp, \bx \rangle^D \dx= (n-1)! \vol(\Delta) \frac{D!}{(D+n-1)!}
\sum_{\substack{ \bk \in \Z_{\geq 0}^{n}, \\ | \bk| = D}}\langle \bp, \bs_1\rangle^{k_1}\cdots \langle \bp, \bs_{n}\rangle^{k_{n}}.
\end{equation*}
\end{lemma}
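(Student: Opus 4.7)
The plan is to reduce the integral of a power of a linear form to a sum of Dirichlet-type integrals via barycentric coordinates. Since $\Delta = \conv(\bs_1,\ldots,\bs_n)$ is an $(n-1)$-simplex, every point $\bx \in \Delta$ has a unique representation $\bx = \sum_{i=1}^n \lambda_i \bs_i$ with $\lambda_i \geq 0$ and $\sum_i \lambda_i = 1$. Linearity of the inner product gives $\langle \bp, \bx\rangle = \sum_i \lambda_i \langle \bp, \bs_i\rangle$, and expanding the $D$-th power by the multinomial theorem yields
\[
\langle \bp, \bx\rangle^D = \sum_{|\bk|=D} \frac{D!}{k_1!\cdots k_n!}\, \lambda_1^{k_1}\cdots \lambda_n^{k_n}\, \langle \bp, \bs_1\rangle^{k_1}\cdots \langle \bp, \bs_n\rangle^{k_n}.
\]
Integrating termwise over $\Delta$, the whole problem reduces to evaluating the monomial integrals $I(\bk) := \int_\Delta \lambda_1^{k_1}\cdots \lambda_n^{k_n}\, \dd\bx$ for each exponent vector $\bk$ with $|\bk|=D$.

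Next I would establish the closed form $I(\bk) = (n-1)!\,\vol(\Delta)\,\frac{k_1!\cdots k_n!}{(D+n-1)!}$. The cleanest way is to first reduce to the standard simplex: an affine change of variables sending $\Delta$ to $\Delta_0 = \conv(\mathbf 0, \mathbf e_1,\ldots,\mathbf e_{n-1})$ and using $\lambda_n = 1 - \lambda_1 - \cdots - \lambda_{n-1}$ shows $I(\bk) = \frac{\vol(\Delta)}{\vol(\Delta_0)} \int_{\Delta_0}\lambda_1^{k_1}\cdots\lambda_{n-1}^{k_{n-1}}(1-\sum \lambda_j)^{k_n}\, \dd\lambda$, with $\vol(\Delta_0) = 1/(n-1)!$. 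The remaining Dirichlet integral is the classical identity
\[
\int_{\Delta_0} \lambda_1^{k_1}\cdots \lambda_{n-1}^{k_{n-1}} (1-\lambda_1-\cdots -\lambda_{n-1})^{k_n}\, \dd\lambda = \frac{k_1!\cdots k_n!}{(D+n-1)!},
\]
which I would prove by induction on $n$: the inner integral in $\lambda_{n-1}$ is a beta function $B(k_{n-1}+1,\, (D+n-1-\sum_{j<n-1}k_j - k_{n-1})) = \frac{k_{n-1}!\,(k_n+D-\sum_{j<n-1}k_j)!}{(D+n-1-\sum_{j<n-1}k_j)!}$, and substituting telescopes into the desired product of factorials.

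Finally, I would multiply the multinomial coefficient $D!/(k_1!\cdots k_n!)$ against $I(\bk)$; the factorials $k_i!$ cancel precisely, leaving the clean common prefactor $(n-1)!\,\vol(\Delta)\,\frac{D!}{(D+n-1)!}$ outside the sum $\sum_{|\bk|=D}\prod_i \langle \bp, \bs_i\rangle^{k_i}$, as required.

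The main obstacle is exclusively the Dirichlet integral step: once the normalization $(D+n-1)!/((n-1)! D!)$ is correctly extracted from the iterated beta-function computation, everything else is just linearity, the multinomial theorem, and bookkeeping. A subtlety worth flagging is the interpretation of $\vol(\Delta)$: since $\Delta$ is $(n-1)$-dimensional inside $\R^d$, the integral $\int_\Delta \cdots \dd\bx$ and the factor $\vol(\Delta)$ must both refer to $(n-1)$-dimensional Lebesgue measure on the affine hull of $\Delta$. This consistency ensures the ratio $I(\bk)/\vol(\Delta)$ is dimensionless and matches the purely combinatorial expression obtained from the standard simplex.
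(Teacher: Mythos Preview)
Your argument is correct and is precisely the classical derivation of this identity via barycentric coordinates, the multinomial expansion, and the Dirichlet integral. The paper itself does not prove this lemma at all: it is quoted as a known result from \cite{lasserre01_multidimensionalversion} and \cite{baldoni11_howintegratepolynomial}, so there is no ``paper's proof'' to compare against. What you have written is essentially the standard proof one finds behind those references.

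One cosmetic point: the displayed second argument of your beta function in the inductive step is garbled (it should simply be $k_n+1$, after the substitution $\lambda_{n-1}=ct$ with $c=1-\sum_{j<n-1}\lambda_j$, giving $c^{\,k_{n-1}+k_n+1}B(k_{n-1}+1,k_n+1)$); but the intended telescoping is clear and the conclusion is right. Your closing remark about interpreting $\vol(\Delta)$ and $\int_\Delta\cdots\dd\bx$ consistently as $(n-1)$-dimensional measure on the affine hull is exactly the point that makes the change-of-variables step legitimate.
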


Additionally, we will make use of the following result about the 
decomposition of polynomials into sums of powers of linear forms.
\Cref{lem:monomial_to_linear_power} shows one way to express any polynomial 
of degree $D$ as a sum of $D$th powers of linear forms. As a consequence, if we know 
how to integrate powers of linear forms, we know how to integrate any polynomial.

\begin{lemma}[{\cite[Equation 13]{baldoni11_howintegratepolynomial}}]\label{lem:monomial_to_linear_power}
Any monomial can be written as a sum of powers of linear forms of the same degree as follows:
\[
x_1^{\alpha_1}x_2^{\alpha_2}\cdots x_d^{\alpha_d}=
\frac{1}{|\balpha|!} \sum_{\substack{\bp \in \Z^d_{\geq 0} \\ \bp \leq \balpha}}(-1)^{|\balpha|-|\bp|}
  \binom{\alpha_1}{p_1}\cdots \binom{\alpha_d}{p_d}(p_1 x_1+\cdots+p_d x_d)^{|\balpha|},
\]
where $|\balpha| = \alpha_1 + \dots + \alpha_d$ and $\bp \leq \balpha$ means that $p_i \leq \alpha_i$ for all coordinates $i \in [d]$.
\end{lemma}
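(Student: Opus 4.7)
The plan is to expand the right-hand side using the multinomial theorem and then identify the coefficient of each monomial on both sides. First I would apply the multinomial theorem to rewrite each factor $(p_1 x_1 + \cdots + p_d x_d)^{|\balpha|}$ as
\[
(p_1 x_1 + \cdots + p_d x_d)^{|\balpha|} = \sum_{\substack{\mathbf{q} \in \Z_{\geq 0}^d \\ |\mathbf{q}| = |\balpha|}} \binom{|\balpha|}{q_1, \dots, q_d} \prod_{i=1}^d p_i^{q_i} x_i^{q_i}.
\]
Substituting this into the right-hand side, and using the factorization $(-1)^{|\balpha|-|\bp|} = \prod_i (-1)^{\alpha_i - p_i}$ together with $\binom{\alpha_1}{p_1}\cdots\binom{\alpha_d}{p_d}$ to separate the $\bp$-sum coordinatewise, I would swap the order of summation to get
\[
\frac{1}{|\balpha|!} \sum_{|\mathbf{q}|=|\balpha|} \binom{|\balpha|}{q_1,\dots,q_d} \left( \prod_{i=1}^d \sum_{p_i=0}^{\alpha_i} (-1)^{\alpha_i - p_i} \binom{\alpha_i}{p_i} p_i^{q_i} \right) \prod_{i=1}^d x_i^{q_i}.
\]

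The key step is then to invoke the classical finite-difference identity
\[
\sum_{p=0}^{\alpha} (-1)^{\alpha-p}\binom{\alpha}{p} p^{q} = \alpha! \, S(q,\alpha),
\]
where $S(q,\alpha)$ denotes the Stirling number of the second kind; equivalently, the left-hand side counts surjections from a $q$-set to an $\alpha$-set, and hence vanishes whenever $q < \alpha$ and equals $\alpha!$ when $q = \alpha$. Applying this coordinate by coordinate shows that the inner product over $i$ vanishes unless $q_i \geq \alpha_i$ for every $i$. Since $\sum_i q_i = |\balpha| = \sum_i \alpha_i$, this forces $\mathbf{q} = \balpha$, and for this term the inner product equals $\prod_i \alpha_i!$. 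The multinomial coefficient contributes $|\balpha|!/\prod_i \alpha_i!$, so the prefactor $1/|\balpha|!$ cancels everything and leaves exactly $x_1^{\alpha_1}\cdots x_d^{\alpha_d}$, as claimed.

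I expect the main obstacle to be bookkeeping rather than mathematical depth: the formula is a classical polarization identity, and the finite-difference identity used in the key step is standard and can be cited. A slicker alternative would be to observe that both sides are homogeneous polynomials of degree $|\balpha|$ and to identify them via the polarization of the symmetric multilinear form associated to the monomial $x_1^{\alpha_1}\cdots x_d^{\alpha_d}$; however, the direct expansion above is more self-contained and fits the computational flavor of the surrounding section, where the identity is invoked in tandem with \Cref{lem:integrals_linear_power_exp} to integrate arbitrary polynomials over simplices.
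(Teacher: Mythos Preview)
Your argument is correct: the multinomial expansion followed by the coordinatewise application of the finite-difference identity $\sum_{p=0}^{\alpha}(-1)^{\alpha-p}\binom{\alpha}{p}p^q = \alpha!\,S(q,\alpha)$ is a clean and complete proof, and the pigeonhole step forcing $\mathbf{q}=\balpha$ is handled properly.

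Note, however, that the paper does not actually prove this lemma; it simply quotes it from \cite{baldoni11_howintegratepolynomial} as a known identity and uses it as a black box. So there is no ``paper's proof'' to compare against. Your write-up therefore adds value by supplying a self-contained verification that the paper omits, and it is well matched in spirit to the way the identity is used downstream in \Cref{thm:integral_polynomial_over_P}.
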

We note that another formula for polynomial integration over simplices is described in \cite{Lasserre:Integration}. 
However, since the vertices of our simplices are parametrized by the normal vector of the central hyperplane, this formula is not suitable in our setting.

We now prove the main result of this section. 
The proof of this result is an adaption for integration of the proof of \cite[Theorem 2.6]{BBMS:IntersectionBodiesPolytopes}, which deals with volume computation.

\begin{theorem}\label{thm:integral_polynomial_over_P}
Let $P\subset \R^d$ be a full-dimensional polytope, let $f(\bx) = \sum_{\balpha} c_{\balpha} \bx^{\balpha}$ be a polynomial, and let $C \subset \R^d$ be a maximal open slicing chamber of the central hyperplane arrangement $\chamrad(P)$ from \Cref{lemma:subdivision_of_sphere}. Restricted to directions $\bu \in C \cap S^{d-1}$, the integral $\int_{P \cap \bu^\perp} f(\bx) \dx$ is a rational function in variables $u_1,\dots,u_d$.
\end{theorem}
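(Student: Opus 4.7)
My plan is to reduce the integral to a sum of integrals of powers of linear forms over $(d-1)$-dimensional simplices whose vertices are explicit rational functions of $\bu$, and then to apply \Cref{lem:integrals_linear_power_exp}. By \Cref{lemma:subdivision_of_sphere}, for every $\bu$ in the fixed chamber $C$ the hyperplane $\bu^\perp$ intersects the same set $E_C$ of edges of $P$, and these intersection points are exactly the vertices of $Q_\bu := P \cap \bu^\perp$. For an edge with endpoints $\bv, \bv'$, the intersection with $\bu^\perp$ is
\[
\bv(\bu) \;=\; \frac{\langle \bu, \bv\rangle\, \bv' \,-\, \langle \bu, \bv'\rangle\, \bv}{\langle \bu, \bv - \bv'\rangle},
\]
whose coordinates are rational in $\bu$ with denominator nonvanishing on $C$. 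Since the combinatorial type of $Q_\bu$ is constant on $C$, I would fix a triangulation $Q_\bu = \bigcup_{j=1}^N \Delta_j(\bu)$ whose simplices are specified by a combinatorial rule that depends only on $C$ (for instance, a placing triangulation with respect to any fixed ordering of $E_C$); each vertex of each $\Delta_j(\bu)$ is then a rational function of $\bu$.

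Next I would use \Cref{lem:monomial_to_linear_power} to rewrite $f$ as a finite $\Q$-linear combination of powers $\langle \bp_i, \bx\rangle^{D_i}$ of linear forms. By linearity of integration and the triangulation, it suffices to show that $\int_{\Delta_j(\bu)} \langle \bp, \bx\rangle^D \dx$ is rational in $\bu$ for each fixed $\bp$ and $D$. \Cref{lem:integrals_linear_power_exp} expresses this integral as a product of a combinatorial constant, the $(d-1)$-dimensional volume $\vol(\Delta_j(\bu))$, and a polynomial in the inner products $\langle \bp, \bs(\bu)\rangle$ with $\bs(\bu)$ ranging over the vertices of $\Delta_j(\bu)$. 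These inner products are rational in $\bu$ by the previous paragraph, so the entire integral is rational in $\bu$ provided that $\vol(\Delta_j(\bu))$ is.

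The volume factor is the main technical obstacle, since $\Delta_j(\bu)$ lives in the moving hyperplane $\bu^\perp$ and its $(d-1)$-dimensional volume is not obviously a rational function on all of $\R^d$. Writing $\bs_1(\bu),\dots,\bs_d(\bu)$ for the vertices of $\Delta_j(\bu)$, the standard formula
\[
\vol(\Delta_j(\bu)) \;=\; \frac{1}{(d-1)!\,\|\bu\|}\,\bigl|\det\bigl(\bs_2(\bu)-\bs_1(\bu),\dots,\bs_d(\bu)-\bs_1(\bu),\,\bu\bigr)\bigr|
\]
contains $\|\bu\|$; however, the theorem statement restricts to $\bu \in C \cap S^{d-1}$, which makes $\|\bu\|=1$. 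Moreover, the connectedness of $C$ and the nondegeneracy of $\Delta_j(\bu)$ throughout $C$ force the sign of the determinant to be constant, so the absolute value may be replaced by a fixed global $\pm 1$. Consequently $\vol(\Delta_j(\bu))$ agrees on $C \cap S^{d-1}$ with a rational function in $u_1,\dots,u_d$, and summing over the simplices of the triangulation and over the terms in the decomposition of $f$ yields the desired rational expression for $\int_{P \cap \bu^\perp} f(\bx)\dx$.
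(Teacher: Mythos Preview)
Your proposal is correct and follows essentially the same approach as the paper's own proof: parametrize the vertices of $P\cap\bu^\perp$ as rational functions of $\bu$, fix a triangulation depending only on the chamber, rewrite $f$ via \Cref{lem:monomial_to_linear_power}, and integrate each power of a linear form over each simplex with \Cref{lem:integrals_linear_power_exp}, handling the volume via the $d\times d$ determinant with $\bu$ as the last row and using connectedness of $C$ to fix the sign. You even make explicit the point that $\|\bu\|=1$ on $C\cap S^{d-1}$ is what removes the square root from the volume formula, which the paper leaves implicit.
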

\begin{proof}
    Let $Q(\bu) = P \cap \bu^\perp$ for some $\bu \in C\cap S^{d-1}$ and fix a triangulation $\mathcal{T}$ of $Q(\bu)$ without any additional vertices. By construction, the set of edges of $P$ which intersect $\bu^\perp$ are uniquely determined by $C$, and thus the triangulation of $P\cap \bu^\perp$ can be chosen for all $\bu \in C \cap S^{d-1}$.
    Let $\bv_1(\bu), \ldots, \bv_n(\bu)$ be the vertices of $Q(\bu)$ and let $\conv(\ba_i,\bb_i)$ be the corresponding edges of $P$ such that $\bv_i(\bu) \in \conv(\ba_i,\bb_i)$. 
    Given a simplex $\Delta\in \mathcal{T}$ with vertices $\bv_{j_1}(\bu),\dots,\bv_{j_d}(\bu)$ its volume can be computed as $\vol (\Delta) = \frac{1}{(d-1)!}|\det M_{\Delta}(\bu)|$ where 
    \[
    M_\Delta(\bu) = \begin{bmatrix}
    \bv_{j_2}(\bu) - \bv_{j_1}(\bu) \\
    \bv_{j_2}(\bu) - \bv_{j_1}(\bu) \\
    \vdots \\
    \bv_{j_{d}}(\bu) - \bv_{j_1}(\bu) \\
    \bu
    \end{bmatrix} ,
    \qquad
    \bv_{j}(\bu) = \frac{\langle \bb_{j}, \bu \rangle \ba_{j} - \langle \ba_{j}, \bu \rangle \bb_{j}}{\langle \bb_{j}-\ba_{j}, \bu \rangle},
    \]
    and for a fixed region $C$, the sign of the determinant of $M_\Delta(\bu)$ is constant for all $\bu \in C \cap S^{d-1}$.
    We now apply \Cref{lem:monomial_to_linear_power} to rewrite our polynomial $f$ as a combination of powers of linear forms, in order to then apply \Cref{lem:integrals_linear_power_exp}. For any $\bx$, we have that
    \[
    f(\bx) = \sum_{\balpha} \frac{c_{\balpha}}{|\balpha|!} \sum_{\substack{\bp \in \Z^d_{\geq 0} \\ \bp \leq \balpha}}(-1)^{|\balpha|-|\bp|}
    \binom{\alpha_1}{p_1}\cdots \binom{\alpha_d}{p_d}(p_1 x_1+\cdots+p_d x_d)^{|\balpha|}.
    \]
    Hence, the integral of $f$ over $Q(\bu)$ can be computed exactly:
    \small
    \begin{align}\label{eq:long-computation}
        \int_{Q(\bu)} & f(\bx) \dx = \sum_{\balpha} \frac{c_{\balpha}}{|\balpha|!} \sum_{\substack{p \in \Z^d_{\geq 0} \\ \bp \leq \balpha}}(-1)^{|\balpha|-|\bp|}
        \binom{\alpha_1}{p_1}\cdots \binom{\alpha_d}{p_d} \int_{Q(\bu)} \langle \bp, \bx \rangle^{|\balpha|} \dx\\ \nonumber
        &= \sum_{\balpha} \frac{c_{\balpha}}{|\balpha|!} \sum_{{\substack{\bp \in \Z^d_{\geq 0} \\ \bp \leq \balpha}}}(-1)^{|\balpha|-|\bp|}
        \binom{\alpha_1}{p_1}\cdots \binom{\alpha_d}{p_d} \sum_{\Delta \in \mathcal{T}} \int_{\Delta} \langle \bp, \bx \rangle^{|\balpha|} \dx\\ \nonumber
        &= \sum_{\balpha} \frac{c_{\balpha}}{|\balpha|!} \sum_{{\substack{\bp \in \Z^d_{\geq 0} \\ \bp \leq \balpha}}}(-1)^{|\balpha|-|\bp|}
        \binom{\alpha_1}{p_1}\cdots \binom{\alpha_d}{p_d} \sum_{\Delta \in \mathcal{T}} 
        \Bigg[ (d-1)! \vol(\Delta) \frac{|\balpha|!}{(|\balpha|+d-1)!} \\ \nonumber
        &  \qquad \qquad \qquad  \qquad \qquad \qquad \qquad \qquad \qquad \qquad
        \sum_{\substack{ \bk \in \Z_{\geq 0}^{d}, \\ | \bk| = |\balpha|}}\langle \bp, \bv_{j_1}(\bu)\rangle^{k_1}\cdots \langle \bp, \bv_{j_d}(\bu)\rangle^{k_{d}} \Bigg] \\ \nonumber
        &= \sum_{\Delta \in \mathcal{T}} \left| \det M_{\Delta}(\bu) \right| \sum_{\balpha} \frac{c_{\balpha}}{(|\balpha|+d-1)!} \sum_{{\substack{\bp \in \Z^d_{\geq 0} \\ \bp \leq \balpha}}}(-1)^{|\balpha|-|\bp|}
        \binom{\alpha_1}{p_1}\cdots \binom{\alpha_d}{p_d}  
        \\ \nonumber
        &  \qquad \qquad \qquad  \qquad \qquad \qquad \qquad \qquad \qquad \qquad
        \sum_{ \substack{ \bk \in \Z_{\geq 0}^{d}, \\ | \bk| = |\balpha|}}\langle \bp, \bv_{j_1}(\bu)\rangle^{k_1}\cdots \langle \bp, \bv_{j_d}(\bu)\rangle^{k_{d}},
        \nonumber
    \end{align}
    which is a rational function in variables $u_1,\dots,u_d$.
\end{proof}

\begin{remark}\label{rmk:closures}
For all hyperplane arrangements we encounter in this and the following sections we define chambers and regions as connected components of the complement of the arrangement, and thus the chambers and regions are open and full-dimensional by definition. All statements regarding the integration of a polynomial are stated solely for these open full-dimensional polyhedra. 
However, since the rational functions do not have poles at the boundary of the regions and chambers, we can extend these statements to the closures of these polyhedra, which yields rational functions on the entire induced polyhedral complex. Thus all statements also hold on these lower-dimensional faces: The integral is a rational function, and these functions are specializations of the rational functions which are defined on the maximal polyhedra containing such a face.
\end{remark}

\begin{example}\label{ex:integration-radial}
Continuing \Cref{ex:chambers-radial}, we illustrate the statement of \Cref{thm:integral_polynomial_over_P}. We compute the volume $\int_{P\cap \bu^\perp} 1 \dx$, the sum of the first moments $\int_{P\cap \bu^\perp} x_1 + x_2 \dx$ and the sum of the second moments $\int_{P\cap \bu^\perp} x_1^2 + x_1 x_2 + x_2 \dx$ over all central sections of the pentagon $P$ from \Cref{ex:chambers-radial}. As shown in \Cref{fig:integration-radial}, each of the integrals is a rational function in $u_1,u_2$ along each of the chambers of the central hyperplane arrangement $\chamrad(P)$. One can check that the functions in two adjacent chambers agree along the common face, as explained in \Cref{rmk:closures}. We note that the fact that the volume is a factor of the latter two integrals is an artefact of low dimension, where every section is a $1$-dimensional simplex, and so no further triangulation is needed.
\end{example}
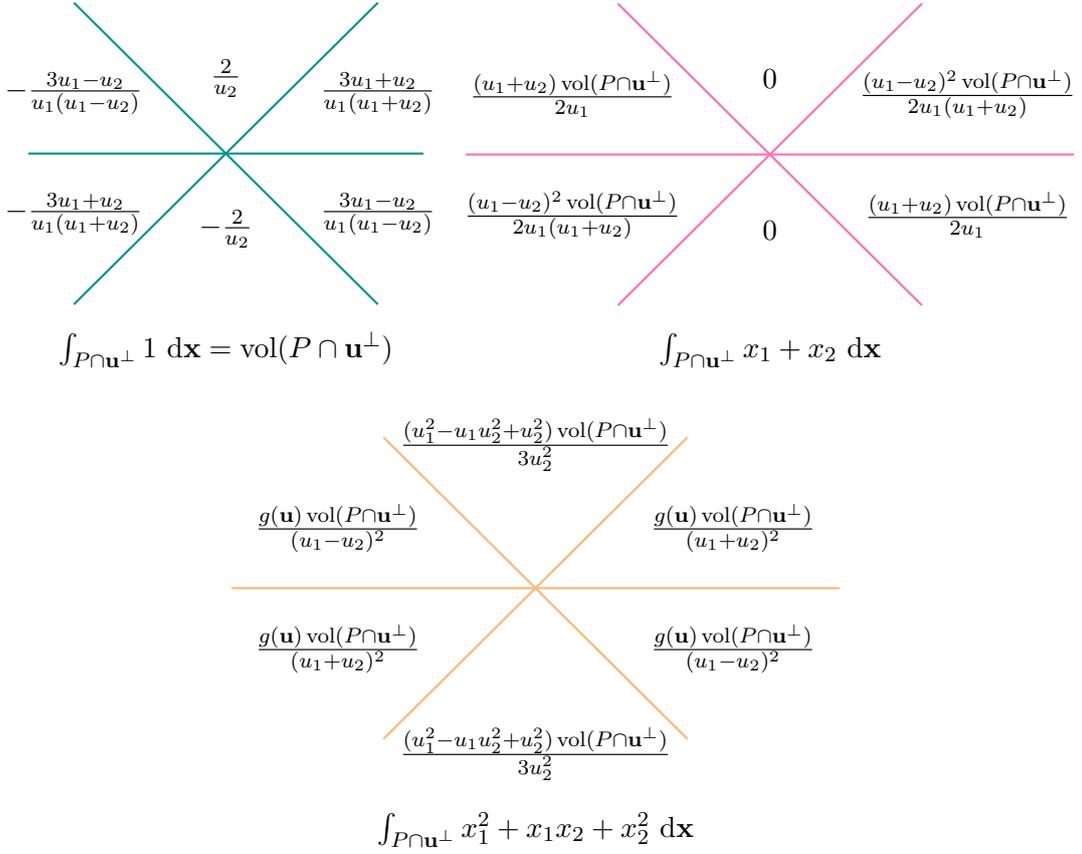
\begin{figure}[ht]
    \centering
    \begin{tikzpicture}[scale = 2]
    \draw[thick, cb-green-sea] (-1,-1) -- (1,1) ;
    \draw[thick, cb-green-sea] (1,-1) -- (-1,1) ;
    \draw[thick, cb-green-sea] (-1.3,0) -- (1.3,0) ;
    \node at (0,.5) {$\tfrac{2}{u_2}$};
    \node at (0,-.5) {$-\tfrac{2}{u_2}$};
    \node at (-1,.4) {$-\tfrac{3u_1 - u_2}{u_1(u_1 - u_2)}$};
    \node at (-1,-.4) {$-\tfrac{3u_1 + u_2}{u_1(u_1 + u_2)}$};
    \node at (1,-.4) {$\tfrac{3u_1 - u_2}{u_1(u_1 - u_2)}$};
    \node at (1,.4) {$\tfrac{3u_1 + u_2}{u_1(u_1 + u_2)}$};
    \node at (0,-1.3) {$\int_{P \cap \bu^\perp} 1 \dx = \vol(P\cap \bu^\perp)$};
\end{tikzpicture}
\begin{tikzpicture}[scale = 2]
    \draw[thick, cb-rose] (-1,-1) -- (1,1) ;
    \draw[thick, cb-rose] (1,-1) -- (-1,1) ;
    \draw[thick, cb-rose] (-2,0) -- (2,0) ;
    \node at (0,.5) {$0$};
    \node at (0,-.5) {$0$};
    \node at (-1.3,.4) {$\tfrac{(u_1 + u_2)\vol(P\cap \bu^\perp)}{2u_1} $};
    \node at (-1.3,-.4) {$\tfrac{(u_1 - u_2)^2 \vol(P\cap \bu^\perp)}{2u_1(u_1 + u_2)}$};
    \node at (1.3,-.4) {$\tfrac{(u_1 + u_2) \vol(P\cap \bu^\perp)}{2u_1}$};
    \node at (1.3,.4) {$\tfrac{(u_1 - u_2)^2  \vol(P\cap \bu^\perp) }{2u_1(u_1 + u_2)}$};
    \node at (0,-1.3) {$\int_{P \cap \bu^\perp} x_1 + x_2 \dx$};
\end{tikzpicture}\\
\vspace*{1em}
\begin{tikzpicture}[scale = 2]
    \draw[thick, cb-salmon-pink] (-1,-1) -- (1,1) ;
    \draw[thick, cb-salmon-pink] (1,-1) -- (-1,1) ;
    \draw[thick, cb-salmon-pink] (-2,0) -- (2,0) ;
    \node at (0,0.95) {$\tfrac{(u_1^2 - u_1 u_2^2 + u_2^2)\vol(P\cap \bu^\perp)}{3 u_2^2}$};
    \node at (0,-1.1) {$\tfrac{(u_1^2 - u_1 u_2^2 + u_2^2)\vol(P\cap \bu^\perp)}{3 u_2^2}$};
    \node at (-1.3,.4) {$\tfrac{g(\bu) \vol(P\cap \bu^\perp)}{(u_1 - u_2)^2} $};
    \node at (-1.3,-.4) {$\tfrac{g(\bu) \vol(P\cap \bu^\perp)}{(u_1 + u_2)^2} $};
    \node at (1.3,-.4) {$\tfrac{g(\bu) \vol(P\cap \bu^\perp)}{(u_1 - u_2)^2} $};
    \node at (1.3,.4) {$\tfrac{g(\bu) \vol(P\cap \bu^\perp)}{(u_1 + u_2)^2} $};
    \node at (0,-1.6) {$\int_{P \cap \bu^\perp} x_1^2 + x_1 x_2 + x_2^2 \dx$};
\end{tikzpicture}
    \caption{The integrals of the sum of moments over central sections of the pentagon, as described in \Cref{ex:integration-radial}, with
    $g(\bu) = \tfrac{(3u_1^2 + u_2^2)(u_1^2 -u_1u_2 + u_2^2)}{3 u_1^2}$.}
    \label{fig:integration-radial}
\end{figure}

\subsection{Translational slices}\label{sec:translational_slices}

In this section we analyze translational slices of a polytope $P \subset \R^d$, which are obtained by translates of a hyperplane with a fixed normal vector $\bu \in S^{d-1}$. Similarly to \Cref{sec:rotational_slices}, we show that for a fixed polynomial $f$ the integral over these affine sections is a univariate polynomial.
Concretely, fix a vector $\bu\in S^{d-1}$ and consider the family of affine hyperplanes
\[
H(\beta) = \{ \bx\in \R^d \mid \langle \bu, \bx \rangle = \beta \}
\]
orthogonal to $\bu$, parametrized by $\beta \in \R$. Parallel, or translational, slices arise naturally in the context of \emph{monotone path polytopes}, which are special instances of fiber polytopes \cite{BilStu:FiberPolytopes}. Also in this case, there is a hyperplane arrangement, consisting of parallel hyperplanes, which governs the combinatorial structure of the translational slices.

\begin{lemma}\label{lemma:hyparr_translation}
    Let $P \subset \R^d$ be a polytope and fix a direction $\bu \in S^{d-1}$. Consider the affine hyperplane arrangement, made of parallel hyperplanes
    \[
        \chamtra = \{ H(\langle \bu, \bv \rangle ) \mid  \bv \text{ is a vertex of } P \}.
    \]
    The maximal open chambers $C$ of $\chamtra(P)$ satisfy the following property: For all $H(\beta) \in C$, the hyperplanes $H(\beta)$ intersects a fixed set of edges of $P$. Moreover, the polytopes $Q(\beta) = P \cap H(\beta)$ are normally equivalent, i.e., they have the same normal fan.
\end{lemma}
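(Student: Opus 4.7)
The plan is to show two things for each maximal open chamber $C$ of $\chamtra$: (a) the set of edges of $P$ met by $H(\beta)$ is the same for every $\beta$ with $H(\beta)\in C$, and (b) after the natural identification of $H(\beta)$ with $\bu^\perp$ by translation along $\bu$, the slices $Q(\beta)$ have the same normal fan.

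First I would describe the chambers explicitly. The arrangement $\chamtra$ is the collection of parallel hyperplanes $\{H(\beta_\bv) : \bv \text{ vertex of } P\}$ with $\beta_\bv = \langle \bu, \bv\rangle$, so its chambers are the open slabs cut out between consecutive values $\beta_\bv$ (plus the two unbounded ones). For $\beta$ in a fixed chamber $C$, the sign of $\langle \bu, \bv\rangle - \beta$ is a constant nonzero quantity in $\beta$ for every vertex $\bv$. An edge $\conv(\ba,\bb)$ is crossed by $H(\beta)$ precisely when $\langle \bu, \ba\rangle - \beta$ and $\langle \bu, \bb\rangle - \beta$ have opposite signs, so the set of crossed edges is constant on $C$. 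This proves (a).

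Next I would tackle normal equivalence. A facet of $Q(\beta)$ arises as $F \cap H(\beta)$ for a facet $F$ of $P$ that contains vertices on both sides of $H(\beta)$; by (a) this condition depends only on $C$, so the same facets $F \subset P$ contribute a facet to $Q(\beta)$ for every $\beta$ with $H(\beta)\in C$. If $\bn_F$ is the outer normal of such an $F$, then the outer normal of $F\cap H(\beta)$ inside $H(\beta)$ is the orthogonal projection $\bn_F - \langle \bn_F, \bu\rangle\bu$ onto $\bu^\perp$, which depends only on $F$ and $\bu$, not on $\beta$. The degenerate case $\bn_F\parallel \bu$ would force $F \subset H(\beta_\bv)$ for some vertex $\bv \in F$, so it only occurs on the boundary of chambers. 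Translating $H(\beta)$ onto $\bu^\perp$ by $\bx \mapsto \bx - \beta\bu$, the two polytopes $Q(\beta_1)$ and $Q(\beta_2)$ for $\beta_1,\beta_2 \in C$ have the same facet normals in $\bu^\perp$, and the incidence structure of their facets is dictated by the common edge/facet incidences of $P$ among the chamber-invariant set of crossed edges. Hence their normal fans coincide, giving (b).

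The main obstacle I anticipate is the bookkeeping of which facets of $P$ actually give rise to facets (as opposed to lower-dimensional faces) of $Q(\beta)$, and the verification that the combinatorial incidences between facets of $Q(\beta)$ are also chamber-invariant. I would handle this by phrasing incidences in terms of shared crossed edges of $P$: two facets of $Q(\beta)$ meet in a ridge exactly when the corresponding facets of $P$ share a crossed edge, which is again determined by the chamber via step (a).
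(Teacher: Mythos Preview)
Your proposal is correct and follows essentially the same approach as the paper: both argue that the sign of $\langle \bu,\bv\rangle-\beta$ is constant on a chamber to pin down the crossed edges, and both obtain normal equivalence by observing that a facet of $Q(\beta)$ comes from a facet of $P$ whose outer normal projects onto $\bu^\perp$ independently of $\beta$. Your treatment is in fact a bit more careful than the paper's (you spell out the facet-incidence bookkeeping and the degenerate case $\bn_F\parallel\bu$), but the underlying argument is the same.
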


\begin{proof}
    Let $C$ be a fixed chamber of $\chamtra$ and $H(\beta) \in C$. The hyperplane $H(\beta)$ intersects an edge $\conv(\bv_1,\bv_2)$ in its interior if and only if $\langle \bu, \bv_1 \rangle < \beta < \langle \bu, \bv_2 \rangle$. Thus, the set of edges intersected by $H(\beta)$ is fixed in each chamber $C$. Consequently, the set of intersected faces of arbitrary dimension is fixed along the open chamber $C$. Thus, the combinatorial type of $Q(\beta)$ is fixed, and so is the combinatorial type of its normal fan. Note that any facet $F$ of $P\cap H(\beta)$ arises as intersection of a facet $G$ of $P$ with $H(\beta)$, and the normal vector of $F$ is a projection of the normal vector of $G$ onto $H(\beta)$. Since projections are invariant under affine translations of $H(\beta)$, all affine sections within $C$ are normally equivalent. 
\end{proof}

\begin{remark}
    We note that the hyperplane arrangement $\chamtra(P)$ induces a partition of $\R^1$:
    \[
        \{ \langle \bu, \bv \rangle \mid \bv \text{ is a vertex of } P \}.
    \]
    Indeed, this partition and $\chamtra(P)$ are equivalent, as $\beta$ can be uniquely determined from $H(\beta)$, when the normal vector $\bu$ is fixed, and vice versa. In the remaining of this article, we allow ourselves to write $\beta \in C$ instead of $H(\beta) \in C$ for a chamber $C$ of the parallel arrangement $\chamtra(P)$, as we have chosen the convention in \Cref{lemma:hyparr_translation} purely for esthetic reasons. As already pointed out in the rotational case, we will write $C\subset \chamtra(P)$ for the maximal chambers.
\end{remark}

\begin{remark}\label{rmk:orderings}
    For generic $\bu \in S^{d-1}$, the linear functional $\langle \bu, \cdot \rangle$ induces an ordering $\bv_1,\dots,\bv_n$ on the vertices of $P$ such that $\langle \bu, \bv_i \rangle < \langle \bu, \bv_{i+1} \rangle$ for all $i = 1,\dots,n-1$. By construction, a chamber of $\chamtra(P)$ consists precisely of those parallel hyperplanes which are orthogonal to $\bu$ and separate $\bv_i$ from $\bv_{i+1}$ for some $i \in [n-1]$.
\end{remark}

\begin{example}\label{ex:parralel-chambers}
   Recall the pentagon $P$ from \Cref{ex:chambers-radial} with vertices
    \[
        \bv_1 = ( -1,-1 ), \  \bv_2 = (1,-1), \ \bv_3 = (1,1), \ \bv_4 = (0,2), \ \bv_5 =  (-1,1).
    \]
    For a generic direction $\bu$ the hyperplane arrangement $\chamtra(P)$ induces six slicing chambers, four of which have nonempty intersection with $P$. \Cref{fig:parralel-chambers} shows the arrangement for a non-generic (left) and a generic (right) choice of $\bu$, namely for $\bu = \tfrac{1}{2} (-1,-1)$ and $\bu = \tfrac{1}{\sqrt5}(1,2)$.
    The parallel hyperplane arrangement for the pentagon $P$ and $\bu = \tfrac{1}{\sqrt5}(1,2)$ induces the ordering of the vertices $\bv_1, \bv_2, \bv_5, \bv_3, \bv_4$.
\end{example}
\begin{figure}[h]
    \centering
    \begin{tikzpicture}[scale = 1]
    \draw[thick, color=black!20] (-1,-1) -- (1,-1) -- (1,1) -- (0,2) -- (-1,1) -- (-1,-1)  ;
    \draw[thick] (-0.6, -1.4) -- (-1.5, -0.5) ;
    \draw[thick] (1.4, -1.4) -- (-1.5, 1.5) ;
    \draw[thick] (1.5, 0.5) -- (-0.2, 2.2) ;
    \draw[->,thick] (0.5,-.5) -- (0.2,-.8);
    \node at (0.0,-0.65) {$\bu$};
\end{tikzpicture}
\hspace*{5em}
\begin{tikzpicture}[scale = 1]
    \draw[thick, color=black!20] (-1,-1) -- (1,-1) -- (1,1) -- (0,2) -- (-1,1) -- (-1,-1)  ;
    \draw[thick] (-0.2, -1.4) -- (-1.5, -0.75) ;
    \draw[thick] (1.5, -1.25) -- (-1.5, 0.25) ;
    \draw[thick] (1.5, -0.25) -- (-1.5, 1.25) ;
    \draw[thick] (1.5, 0.75) -- (-1.4, 2.2) ;
    \draw[thick] (1.5, 1.25) -- (-0.4, 2.2) ;
    \draw[->,thick] (0.6,-0.8) -- (0.8,-0.4);
    \node at (0.99,-0.65) {$\bu$};
\end{tikzpicture}
    \caption{The parallel arrangement $\chamtra$ for normal directions $\bu = \tfrac{1}{2} (-1,-1)$ on the left and  $\bu = \tfrac{1}{\sqrt5}(1,2)$ on the right.}
    \label{fig:parralel-chambers}
\end{figure}

We now prove the analogue of \Cref{thm:integral_polynomial_over_P} for parallel affine sections.

\begin{theorem}\label{thm:parallel_sections}
    Let $P \subset \R^d$ be a full-polytope, let $f(\bx) = \sum_{\balpha} c_{\balpha} \bx^{\balpha} $ be a polynomial, fix a normal direction $\bu \in S^{d-1}$ and let $C \subset \R^d$ be a maximal open chamber of the hyperplane arrangement \ $\chamtra(P)$ from \Cref{lemma:hyparr_translation}. Restricted to values $\beta \in C$, the integral $\int_{P\cap H(\beta)} f(\bx) \dx$ is a polynomial in the variable $\beta$.
\end{theorem}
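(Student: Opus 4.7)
The plan is to adapt the argument from the proof of \Cref{thm:integral_polynomial_over_P}, exploiting that in the translational setting the parameter $\beta$ enters linearly in the vertices of the section, so that all the rational expressions simplify to polynomials.

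First, fix a chamber $C \subset \chamtra(P)$. By \Cref{lemma:hyparr_translation}, the set of edges of $P$ intersected by $H(\beta)$ is the same for every $\beta \in C$, and the sections $Q(\beta) = P \cap H(\beta)$ are all normally equivalent. I would therefore fix once and for all a triangulation $\mathcal{T}$ of $Q(\beta)$ (for some reference $\beta \in C$) without adding vertices, and transport it along $C$ using the combinatorial equivalence. For an edge $\conv(\ba_i,\bb_i)$ of $P$ crossed by $H(\beta)$, the corresponding vertex of $Q(\beta)$ is
\[
    \bv_i(\beta) \;=\; \ba_i + \frac{\beta - \langle \bu, \ba_i\rangle}{\langle \bu, \bb_i - \ba_i\rangle}\,(\bb_i - \ba_i),
\]
which is \emph{affine linear} in $\beta$, since the denominator $\langle \bu, \bb_i - \ba_i\rangle$ does not depend on $\beta$ once $\bu$ is fixed. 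This is the essential difference with the rotational case, where the analogous denominator depends on the variable.

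Next, I would copy the computation carried out in \eqref{eq:long-computation}. For a simplex $\Delta \in \mathcal{T}$ with vertices $\bv_{j_1}(\beta),\dots,\bv_{j_d}(\beta)$, its $(d-1)$-volume equals $\tfrac{1}{(d-1)!}|\det M_\Delta(\beta)|$, where $M_\Delta(\beta)$ has $d-1$ rows $\bv_{j_k}(\beta)-\bv_{j_1}(\beta)$ (each affine in $\beta$) together with the constant row $\bu$; hence $\det M_\Delta(\beta)$ is a polynomial in $\beta$ of degree at most $d-1$. Its sign is constant on $C$: if it changed, two vertices of $\Delta$ would coincide at some $\beta_0 \in C$, contradicting that the combinatorial type is fixed on $C$. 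Then \Cref{lem:monomial_to_linear_power} rewrites each monomial of $f$ as a combination of $|\balpha|$-th powers of linear forms $\langle \bp, \bx\rangle$, and \Cref{lem:integrals_linear_power_exp} yields
\[
    \int_\Delta \langle \bp, \bx\rangle^{|\balpha|} \dx = (d-1)!\,\vol(\Delta)\,\frac{|\balpha|!}{(|\balpha|+d-1)!} \sum_{\substack{\bk \in \Z_{\geq 0}^{d}\\|\bk|=|\balpha|}} \langle \bp, \bv_{j_1}(\beta)\rangle^{k_1}\cdots \langle \bp,\bv_{j_d}(\beta)\rangle^{k_d}.
\]
Since each $\langle \bp,\bv_{j_k}(\beta)\rangle$ is affine in $\beta$, every summand is a polynomial in $\beta$; multiplying by $|\det M_\Delta(\beta)|$ (with the sign chosen once per chamber) preserves polynomiality. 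Summing over $\Delta \in \mathcal{T}$ and over the monomial decomposition of $f$ gives that $\int_{Q(\beta)} f(\bx)\dx$ is a polynomial in $\beta$ on $C$.

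The only subtle point, and the one I would want to double-check, is the control of the absolute value $|\det M_\Delta(\beta)|$: the argument works precisely because the sign of each $\det M_\Delta(\beta)$ is constant along the open chamber $C$, so we may replace $|\det M_\Delta(\beta)|$ by $\pm\det M_\Delta(\beta)$ with a fixed sign and keep a genuine polynomial expression. Everything else is a direct translational analogue of the rotational computation, with polynomials replacing rational functions.
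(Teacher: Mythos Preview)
Your proof is correct and follows essentially the same route as the paper: parametrize the vertices $\bv_i(\beta)$ linearly in $\beta$, fix a triangulation valid across the chamber, and rerun the computation \eqref{eq:long-computation} to obtain a polynomial in $\beta$. The only imprecision is in your justification that $\det M_\Delta(\beta)$ has constant sign---its vanishing would mean the simplex becomes degenerate (affinely dependent vertices), not that two vertices coincide---but the conclusion stands since the fixed combinatorial type on $C$ guarantees the triangulation never degenerates.
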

\begin{proof}
    This proof has the same structure as the proof of \Cref{thm:integral_polynomial_over_P}. The main difference lies in the parametrization of the vertices of the sections of the polytope.
    
    Let $\beta \in C$ and $Q(\beta) = H(\beta) \cap P$. Let $\bv_1(\beta),\dots,\bv_n(\beta)$ denote the vertices of $Q(\beta)$ and let $\conv(\ba_i,\bb_i)$ be the edge of $P$ such that $\bv_i(\beta) = \conv(\ba_i,\bb_i)\cap H(\beta)$. From
    \begin{equation*}
        \bv_i(\beta) = \lambda \ba_i + (1-\lambda )\bb_i, \qquad
        \langle \bu, \bv_i(\beta) \rangle = \beta,
    \end{equation*}
    we obtain
    \begin{equation*}\label{eq:vertices_translation}
        \bv_i(\beta) = \frac{\beta}{\langle \bu, \bb_i-\ba_i\rangle} (\bb_i - \ba_i) + \frac{\langle \bu, \bb_i \rangle \ba_i - \langle \bu, \ba_i \rangle \bb_i}{\langle \bu, \bb_i - \ba_i\rangle},
    \end{equation*}
    which depends linearly on $\beta$. Let $\mathcal{T}$ be a triangulation of $Q(\beta)$ which uses only the vertices of $Q(\beta)$. By construction of $C$ this triangulation can be chosen equal for every $\beta \in C$. Let $
    \bv_{j_1}(\beta),\dots,\bv_{j_d}(\beta)$ denote the vertices of a simplex $\Delta \in \mathcal T$, and let 
    \[
    M_\Delta(\beta) = \begin{bmatrix}
    \bv_{j_2}(\beta) - \bv_{j_1}(\beta) \\
    \bv_{j_2}(\beta) - \bv_{j_1}(\beta) \\
    \vdots \\
    \bv_{j_{d}}(\beta) - \bv_{j_1}(\beta) \\
    \bu
    \end{bmatrix}.
    \]
    Repeating the computation of \eqref{eq:long-computation} in the proof of \Cref{thm:integral_polynomial_over_P}, $\int_{P \cap H(\beta)} f(\bx) \dx$ equals
    {\small
    \begin{equation*}
        \sum_{\Delta \in \mathcal{T}} \left| \det M_{\Delta}(\beta) \right| \sum_{\balpha} \frac{c_{\balpha}}{(|\balpha|+d-1)!} \!\!\sum_{{\substack{\bp \in \Z^d_{\geq 0} \\ \bp \leq \balpha}}}\!\!(-1)^{|\balpha|-|\bp|}
        \binom{\alpha_1}{p_1}\cdots \binom{\alpha_d}{p_d}
        \!\!\sum_{ \substack{ \bk \in \Z_{\geq 0}^{d}, \\ | \bk| = |\balpha|}}\!\!\langle \bp, \bv_{j_1}(\beta)\rangle^{k_1}\cdots \langle \bp, \bv_{j_d}(\beta)\rangle^{k_{d}} ,
    \end{equation*}}
    which is a polynomial in $\beta$, for $\beta \in C$. 
\end{proof}

\begin{example}\label{ex:chambers-translational}
    We continue \Cref{ex:chambers-radial,ex:parralel-chambers} by computing integrals of sums of moments over parallel sections of the pentagon $P$, with respect to the normal direction $\bu = \tfrac{1}{\sqrt5}(1,2)$.
    The polynomials describing the function $\int_{P\cap H(\beta)} f(\bx) \dx$ for $f(\bx) = 1$, $f(\bx) = x_1 + x_2$, and $f(\bx) = x_1^2 + x_1 x_2 + x_2^2$ respectively, for each chamber in the arrangement, are shown in \Cref{fig:chambers-translational}.
    \begin{figure}[ht]
        \centering
        \begin{tikzpicture}[scale = 1.3]
    \draw[thick, color=black!20] (-1,-1) -- (1,-1) -- (1,1) -- (0,2) -- (-1,1) -- (-1,-1)  ;
    \draw[thick, cb-green-sea] (-0.2, -1.4) -- (-1.5, -0.75) ;
    \draw[thick, cb-green-sea] (1.5, -1.25) -- (-1.5, 0.25) ;
    \draw[thick, cb-green-sea] (1.5, -0.25) -- (-1.5, 1.25) ;
    \draw[thick, cb-green-sea] (1.5, 0.75) -- (-1.4, 2.2) ;
    \draw[thick, cb-green-sea] (1.5, 1.25) -- (-0.4, 2.2) ;
    \node at (-0.4,-0.7) {$\tfrac{5\beta + 3\sqrt5}{2}$};
    \node at (0,0) {$\sqrt5$};
    \node at (0,1) {$\tfrac{-5\beta + 7\sqrt5}{6}$};
    \node at (1.1,1.9) {$\tfrac{-10\beta + 8\sqrt5}{3}$};
    \draw [thick,black] (0.5,1.9) to [out=180,in=40] (0,1.7);
    \node at (0,-1.7) {$\int_{P\cap H(\beta)} 1 \dx = \vol(P\cap H(\beta))$};
\end{tikzpicture}
\begin{tikzpicture}[scale = 1.3]
    \draw[thick, color=black!20] (-1,-1) -- (1,-1) -- (1,1) -- (0,2) -- (-1,1) -- (-1,-1)  ;
    \draw[thick, cb-rose] (-0.2, -1.4) -- (-1.5, -0.75) ;
    \draw[thick, cb-rose] (1.5, -1.25) -- (-1.5, 0.25) ;
    \draw[thick, cb-rose] (1.5, -0.25) -- (-1.5, 1.25) ;
    \draw[thick, cb-rose] (1.5, 0.75) -- (-1.4, 2.2) ;
    \draw[thick, cb-rose] (1.5, 1.25) -- (-0.4, 2.2) ;
    \node at (-0.4,-0.7) {\rotatebox{-27}{$\tfrac{(3\sqrt 5\beta + 1)\vol(P\cap H(\beta))}{4}$}};
    \node at (0,0) {\rotatebox{-27}{$\tfrac{5\beta}{2}$}};
    \node at (0,1) {\rotatebox{-27}{$\quad \tfrac{(7\sqrt5\beta - 1)\vol(P\cap H(\beta))}{12}$}};
    \node at (1.1,1.9) {\rotatebox{-27}{$\tfrac{(\sqrt5\beta + 2)\vol(P\cap H(\beta))}{3}$
    }};
    \draw [thick,black] (0.5,1.9) to [out=180,in=40] (0,1.7);
    \node at (0,-1.7) {$\int_{P\cap H(\beta)} x_1 + x_2 \dx$};
\end{tikzpicture}
\begin{tikzpicture}[scale = 1.3]
    \draw[thick, color=black!20] (-1,-1) -- (1,-1) -- (1,1) -- (0,2) -- (-1,1) -- (-1,-1)  ;
    \draw[thick, cb-salmon-pink] (-0.2, -1.4) -- (-1.5, -0.75) ;
    \draw[thick, cb-salmon-pink] (1.5, -1.25) -- (-1.5, 0.25) ;
    \draw[thick, cb-salmon-pink] (1.5, -0.25) -- (-1.5, 1.25) ;
    \draw[thick, cb-salmon-pink] (1.5, 0.75) -- (-1.4, 2.2) ;
    \draw[thick, cb-salmon-pink] (1.5, 1.25) -- (-0.4, 2.2) ;
    \node at (-0.4,-0.7) {\rotatebox{-27}{$\tfrac{(10\beta^2 + 3\sqrt5 \beta + 3)\vol(P\cap H(\beta))}{4}$}};
    \node at (0,0) {\rotatebox{-27}{$\tfrac{\sqrt5(5\beta^2 +1)}{4}$}};
    \node at (0,1) {\rotatebox{-27}{$\quad \tfrac{(50\beta^2 - 5\sqrt5\beta + 13)\vol(P\cap H(\beta))}{36}$}};
    \node at (1.1,1.9) {\rotatebox{-27}{$\tfrac{2(10\beta^2 - 7\sqrt5 \beta + 14)\vol(P\cap H(\beta))}{9}$
    }};
    \draw [thick,black] (0.5,1.9) to [out=180,in=40] (0,1.7);
    \node at (0,-1.7) {$\int_{P\cap H(\beta)} x_1^2 +x_1x_2 + x_2^2 \dx$};
\end{tikzpicture}
        \caption{The integrals over the sum of moments over affine sections of the pentagon in direction $\tfrac{1}{\sqrt5}(1,2)$, as described in \Cref{ex:chambers-translational}.}
        \label{fig:chambers-translational}
    \end{figure}
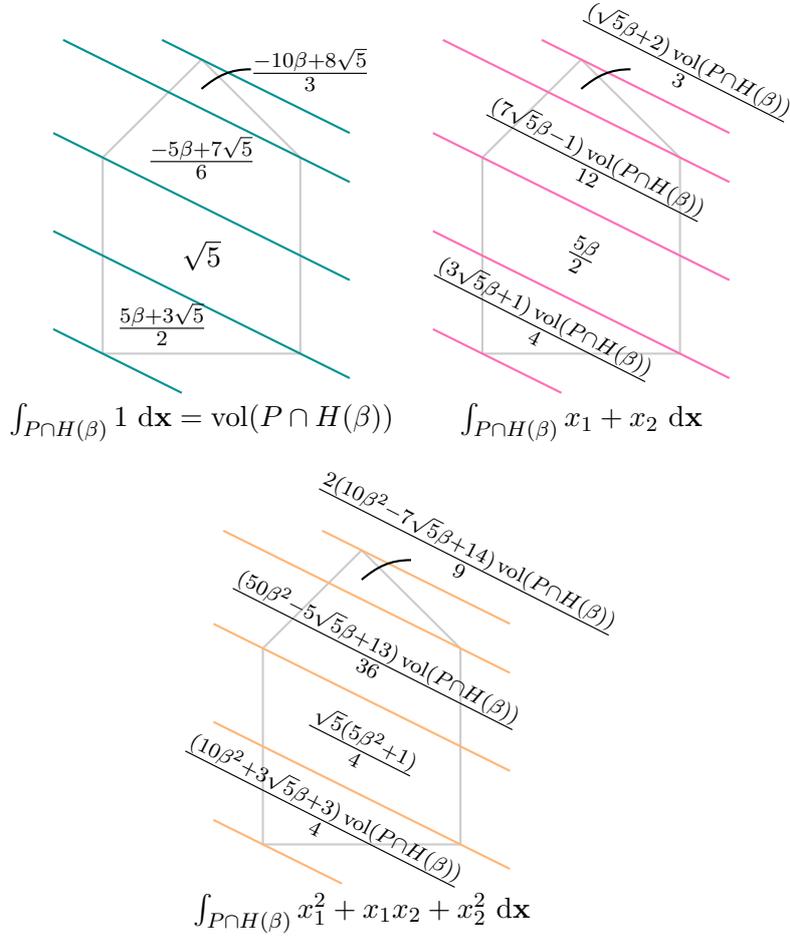
\end{example}

The above discussion has a discrete version, where instead of volumes or integrals we count lattice points in the section. There are several discrete variants of well-known continuous inequalities in convex geometry and Brunn-Minkowski theory \cite{Freyer+Henk2022}. While we are unable to do optimization over discrete sections of polytopes, we obtain the following partial result. We give give a sketch of the proof in \Cref{rmk:ehrhart}.

\begin{theorem} \label{thm:ehrhart}
    The Ehrhart function $Ehr(P \cap H(\beta))$ that counts lattice points of dilations of polytopes is a piecewise rational function in $\beta$. In fixed dimension and for a rational polytope $P$, these formulas can be computed in polynomial time.
\end{theorem}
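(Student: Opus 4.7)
The plan is to view the family $\{P \cap H(\beta) : \beta \in C\}$, for a chamber $C \subset \chamtra(P)$, as a parametric rational polytope and then invoke the machinery of parametric Ehrhart counting. By \Cref{lemma:hyparr_translation} the sections have constant combinatorial type along $C$, and the vertex parametrization in the proof of \Cref{thm:parallel_sections} shows that each vertex $\bv_i(\beta)$ is an affine linear function of $\beta$ with rational coefficients determined by the edge data of $P$ and the direction $\bu$. Projecting the facet inequalities of $P$ onto $H(\beta)$ (equivalently, fixing a rational affine basis on $H(\beta)$) yields an $H$-representation $A(\beta)\bx \le \bb(\beta)$ in which $A$ and $\bb$ depend affinely on the single parameter $\beta$; this is precisely the input format of a parametric rational polytope in the sense of Clauss--Loechner.

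Having established this, I would apply the parametric Ehrhart theorem of Clauss--Loechner together with its refinement by Verdoolaege--Seghir--Beyls--Loechner--Bruynooghe: the number of lattice points of a parametric rational polytope whose vertices depend affinely on the parameters is, as a function of the parameters, a piecewise quasi-polynomial on a finite cell decomposition of parameter space (the \emph{validity domains}), and on each such cell the count is expressed by rational functions of the parameters. Applied to our setting, this gives that $\mathrm{Ehr}(P \cap H(\beta))$ is piecewise rational in $\beta$, the pieces forming a finite refinement of the chamber decomposition from \Cref{lemma:hyparr_translation}. For the complexity claim, invoke the parametric version of Barvinok's algorithm (Barvinok, Barvinok--Woods, Verdoolaege et al.): in fixed dimension it outputs the short rational generating function, and hence the piecewise formula for $\mathrm{Ehr}(P\cap H(\beta))$, in polynomial time in the bit length of the rational data defining $P$, $\bu$, and $\beta$.

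The principal technical obstacle is the reconciliation of the ambient lattice $\Z^d$ with the intrinsic $(d-1)$-dimensional lattice on $H(\beta)$ needed to invoke Ehrhart theory. One has to decide whether dilation is performed in the ambient space (so that $t(P\cap H(\beta)) = tP \cap H(t\beta)$ automatically rescales $\beta$) or intrinsically after fixing a rational basis of $H(\beta)$; for rational $\bu$ and $\beta$ both variants admit a uniform treatment by passing to the primitive integer affine lattice on the affine span of the section, but in either setting one must verify that the validity-domain decomposition produced by the parametric counting algorithm is only a refinement of $\chamtra(P)$ and not a genuinely different combinatorial structure. Once this bookkeeping is in place, the statement reduces cleanly to the cited parametric counting and polynomial-time lattice-point-counting results.
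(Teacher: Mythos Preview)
Your approach is essentially correct but heavier than the paper's. The paper's argument (in \Cref{rmk:ehrhart}) is a two-line observation: Barvinok--Pommersheim's short rational generating function for $g_{P\cap H(\beta)}(\bx)=\sum_{\balpha\in (P\cap H(\beta))\cap\Z^d}\bx^{\balpha}$ depends only on the vertices and the tangent cones (equivalently, the normal fan) of the polytope; since \Cref{lemma:hyparr_translation} already shows that the sections $P\cap H(\beta)$ are \emph{normally equivalent} across a chamber of $\chamtra(P)$, the cone data $\bv_{ij}$ in the formula are constant on each chamber and only the numerator exponents $\bu_i$ vary (rationally) with $\beta$. The polynomial-time claim is then the standard fixed-dimension Barvinok bound. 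By contrast, you invoke the full parametric Ehrhart machinery of Clauss--Loechner and Verdoolaege et al., which produces its own validity-domain decomposition that you must then reconcile with $\chamtra(P)$. That reconciliation step, and the lattice bookkeeping you flag, are both short-circuited by the paper's use of normal equivalence: once the normal fan is fixed, there is nothing further to refine. Your route is more general (it would survive without the normal-equivalence lemma), but here it re-derives structure the paper already has in hand.
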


\begin{remark}\label{rmk:ehrhart}
\Cref{thm:ehrhart} is a direct consequence of the theory of rational functions encoding the lattice points of polyhedra \cite{barvipom}. Each lattice point is thought of as the exponent vector of a 
monomial, turning the set of all lattice points in $P$ into a Laurent polynomial $g_P(\bx)=\sum_{\balpha \in P\cap\Z^d} \bx^{\balpha}.$ 
This monomial sum can be written as a sum of rational 
functions
\[
g_P(\bx) = \sum_{i\in I} {E_i \frac{\bx^{\bu_i}} {\prod\limits_{j=1}^d
(1-\bx^{\bv_{ij}})}},
\]
where $I$ is an indexing set, $E_i\in\{1,-1\}$, and $\bu_i, \bv_{ij} \in\Z^d$ for all $i$ and $j$. The formula above coincides for polytopes with the same normal fan, which, by \Cref{lem:sweep-arrangement}, is the case within the chambers of $\chamtra(P)$. Moreover, assuming the dimension $d$ is fixed, the size of the sum is polynomial in the input size.
\end{remark}

\begin{remark}
    In the setting of rotational slices, computing the number of lattice points in the section $P\cap H$ is a much more difficult problem than the translational case. This lies in the fact that the polytopes $P\cap \bu_1^\perp$ and $P\cap \bu_2^\perp$ are \emph{not} normally equivalent, even for $\bu_i$ in the same chamber.
\end{remark}

\subsection{Projections and polarity}\label{section:projections}

The dual version of intersections is given by projections. It is therefore natural to wonder if any of the results of the previous sections apply in this context. They actually do apply, and involve hyperplane arrangements that we have already encountered. If central, rotating sections are connected to the construction of intersection bodies, and parallel, translating sections to monotone path polytopes, here we should keep in mind the concept of \emph{projection bodies} \cite[Chapter 4]{Gardnerbook}. This is another construction coming from convex geometry and it encodes in its support function the volume of all $(d-1)$-dimensional projections 
of a given convex body in $\R^d$. In our setting, we denote by $P^\circ$ the polar of $P$ and we identify $\R^d$ with its dual space, via the standard scalar product.

\begin{lemma}\label{lemma:hyparr_projection}
    Let $P \subset \R^d$ be a polytope containing the origin in its interior and let $P^\circ$ be the polar of $P$. Consider the affine hyperplane arrangement $\chamrad(P^\circ)$. The maximal open chambers $C$ of $\chamrad(P^\circ)$ satisfy the following property: For all $\bu \in C$, the projection $\pi_\bu(P)$ has as vertices the projections of a fixed set of vertices of $P$. In particular, the polytopes $\pi_\bu(P)$ are combinatorially equivalent.
\end{lemma}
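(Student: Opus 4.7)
The plan is to use polar duality to reinterpret $\chamrad(P^\circ)$ in terms of the facet normals of $P$, and then show that the combinatorial type of the projection $\pi_\bu(P)$ is determined by the sign vector $(\sgn\langle \mathbf{w}, \bu\rangle)_{\mathbf{w}}$ as $\mathbf{w}$ ranges over the vertices of $P^\circ$. Since this sign vector is constant and entirely nonzero on every maximal open chamber of $\chamrad(P^\circ)$ by construction, the lemma will follow.

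First I would note that, because $P$ contains the origin in its interior, the vertices of $P^\circ$ are precisely (positive multiples of) the outer facet normals of $P$. Thus the hyperplanes $\mathbf{w}^\perp$ comprising $\chamrad(P^\circ)$ are exactly the hyperplanes orthogonal to facet normals of $P$, and a maximal open chamber $C$ is characterized by the sign vector $\sigma(\bu)=(\sgn\langle \mathbf{w},\bu\rangle)_\mathbf{w}$ being constant and entirely nonzero.

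The key geometric input is that for any $\mathbf{c}\in\bu^\perp$ one has $\langle \mathbf{c},\mathbf{q}\rangle=\langle \mathbf{c},\pi_\bu(\mathbf{q})\rangle$ for every $\mathbf{q}\in\R^d$. Therefore the face of $\pi_\bu(P)$ maximizing $\mathbf{c}$ is the $\pi_\bu$-image of the face of $P$ maximizing $\mathbf{c}$, and the normal fan of $\pi_\bu(P)$ viewed in $\bu^\perp$ is obtained by intersecting each cone of the normal fan $\mathcal{N}(P)$ of $P$ with $\bu^\perp$. Since every cone of $\mathcal{N}(P)$ is conically generated by a subset of the facet normals, the face lattice of this sliced fan, and hence the combinatorial type of $\pi_\bu(P)$, is determined entirely by $\sigma(\bu)$. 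For the vertex statement specifically, I would use that $\pi_\bu(\bv)$ is a vertex of $\pi_\bu(P)$ iff $N_P(\bv)\cap\bu^\perp$ is $(d-1)$-dimensional, iff $N_P(\bv)$ meets both open halfspaces bounded by $\bu^\perp$, iff the facet normals of the facets containing $\bv$ have inner products with $\bu$ of both signs---once again a condition on $\sigma(\bu)$ alone.

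The main technical obstacle is to argue cleanly that the entire face lattice of the sliced normal fan, not merely its ray set, is determined by the sign vector. I would establish this by induction on the dimension of the cones of $\mathcal{N}(P)$, invoking the elementary fact that the intersection of a polyhedral cone with a hyperplane through the origin is again a polyhedral cone whose combinatorial structure can be read off from the signs of its generators relative to that hyperplane. A pleasant consequence is a direct analogy with \Cref{lemma:subdivision_of_sphere}: the arrangement $\chamrad(P^\circ)$ plays the role for projections of $P$ that $\chamrad(P)$ plays for central sections of $P$, reflecting the duality between intersections and projections.
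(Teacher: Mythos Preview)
Your proposal is correct, but it takes a different route from the paper. The paper's proof is essentially four lines: it invokes the polarity identity $\pi_\bu(P)=(P^\circ\cap\bu^\perp)^\circ$, observes that by \Cref{lemma:subdivision_of_sphere} the combinatorial type of $P^\circ\cap\bu^\perp$ is constant on each chamber $C$ of $\chamrad(P^\circ)$, and concludes since polarity preserves combinatorial type. The vertex statement then falls out because vertices of $\pi_\bu(P)$ correspond to facets of $P^\circ\cap\bu^\perp$, which in turn come from facets of $P^\circ$ (i.e., vertices of $P$) meeting $\bu^\perp$.

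Your argument via normal fans is the ``unpackaged'' version of the same idea: since the normal cone $N_P(F)$ coincides with the cone over the polar face $F^\diamond\subset P^\circ$, slicing $\mathcal N(P)$ by $\bu^\perp$ is combinatorially the same as slicing $P^\circ$ by $\bu^\perp$, and your sign vector $\sigma(\bu)$ is exactly the datum that specifies the chamber of $\chamrad(P^\circ)$. What the paper's approach buys is brevity and the reuse of \Cref{lemma:subdivision_of_sphere}, so your inductive argument that the full face lattice of the sliced fan is determined by $\sigma(\bu)$ becomes unnecessary. What your approach buys is an explicit intrinsic criterion (the facet normals at $\bv$ take both signs against $\bu$) for when $\pi_\bu(\bv)$ is a vertex, and it avoids the detour through the polar; this is arguably more informative, at the cost of more work.
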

\begin{proof}
    Let $C$ be a maximal open chamber of $\chamrad(P^\circ)$, and let $\bu \in C$. By polarity of projections and intersections, we have that 
    \[
    \pi_\bu(P) = \left( P^\circ \cap \bu^\perp\right)^\circ.
    \]
    Since the combinatorial type of $P^\circ \cap \bu^\perp$ does not change when $\bu \in C$, the combinatorial type of $\pi_\bu(P)$ does not change as well.
    Moreover, the vertices of $\pi_\bu(P)$ are the projection of those vertices of $P$, whose corresponding facet $F$ of $P^\circ$ defines a facet $F \cap \bu^\perp$
    of $P^\circ \cap \bu^\perp$.
\end{proof}

Applying the same strategy as in the proofs of \Cref{thm:integral_polynomial_over_P,thm:parallel_sections}, we obtain an analogous result about the integral of a polynomial for projections of polytopes, which constitutes the first step towards the proof of \Cref{thm:intro_algorithm} \ref{mainthm:projection}.

\begin{theorem}\label{thm:integral_polynomial_projection}
Let $P\subset \R^d$ be a full-dimensional polytope, let $f(\bx) = \sum_{\balpha} c_{\balpha} x^{\balpha}$ be a polynomial, and let $C \subset \R^d$ be a maximal open chamber of the hyperplane arrangement $\chamrad(P^\circ)$ in \Cref{lemma:hyparr_projection}. Restricted to directions $\bu \in C \cap S^{d-1}$, the integral $\int_{\pi_{\bu}(P)} f(\bx) \dx$ is a polynomial in variables $u_1,\dots,u_d$.
\end{theorem}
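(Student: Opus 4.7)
The plan is to mirror the proof of \Cref{thm:integral_polynomial_over_P}, exploiting the crucial observation that the vertex parametrization of a projection is \emph{polynomial} in $\bu$, rather than rational as in the intersection case.

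Fix a chamber $C$ of $\chamrad(P^\circ)$ and a direction $\bu \in C \cap S^{d-1}$. By \Cref{lemma:hyparr_projection}, the vertices of $\pi_\bu(P)$ are exactly $\pi_\bu(\bv_1),\dots,\pi_\bu(\bv_n)$, where $\bv_1,\dots,\bv_n$ is a subset of vertices of $P$ that does not depend on $\bu$ within $C$. Since $\|\bu\| = 1$, the orthogonal projection has the explicit form
\[
\pi_\bu(\bv_i) \;=\; \bv_i - \langle \bu, \bv_i \rangle \bu,
\]
so each coordinate of $\pi_\bu(\bv_i)$ is a polynomial of degree $2$ in $u_1,\dots,u_d$. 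Using the combinatorial equivalence of $\pi_\bu(P)$ across $C$, I would fix a triangulation $\mathcal T$ of $\pi_\bu(P)$ that uses only these vertices; this triangulation is valid throughout $C$. For a $(d-1)$-simplex $\Delta \in \mathcal T$ with vertex set $\pi_\bu(\bv_{j_1}),\dots,\pi_\bu(\bv_{j_d})$, its volume is $\tfrac{1}{(d-1)!}|\det M_\Delta(\bu)|$, where $M_\Delta(\bu)$ is the $d \times d$ matrix whose first $d-1$ rows are the differences $\pi_\bu(\bv_{j_k}) - \pi_\bu(\bv_{j_1})$ and whose last row is $\bu$. Because $\bu$ is a unit normal to the affine span of $\Delta$, this augmentation produces the correct $(d-1)$-volume without rescaling; since the combinatorics are fixed on $C$, the sign of the determinant is constant on $C$ and the absolute value can be dropped.

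Next I would repeat the derivation in \eqref{eq:long-computation}: use \Cref{lem:monomial_to_linear_power} to write $f$ as a sum of powers of linear forms, and apply \Cref{lem:integrals_linear_power_exp} to each simplex $\Delta \in \mathcal T$. The only $\bu$-dependent quantities appearing are $\det M_\Delta(\bu)$ and the dot products
\[
\langle \bp, \pi_\bu(\bv_j)\rangle \;=\; \langle \bp, \bv_j\rangle - \langle \bu, \bv_j\rangle \langle \bp, \bu\rangle,
\]
both of which are polynomials in $u_1, \dots, u_d$. Summing across $\Delta$, $\balpha$, and $\bp$ therefore yields a polynomial in $u_1, \dots, u_d$.

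The subtlety I anticipate is justifying that the answer is genuinely polynomial, not merely rational with denominators involving $\|\bu\|^2$. This hinges on the restriction to $S^{d-1}$: only for unit $\bu$ does the projection admit the polynomial formula $\bv - \langle \bu, \bv\rangle \bu$ (off the sphere this would carry a denominator $\|\bu\|^2$), and only for unit $\bu$ does the augmented matrix $M_\Delta(\bu)$ recover the correct $(d-1)$-volume without a $\|\bu\|^{-1}$ factor. On the sphere, the relation $\|\bu\|^2 = 1$ collapses all such potential denominators to $1$. As in \Cref{rmk:closures}, the formula then extends continuously to the closure of $C$, since no genuine poles appear along the boundary.
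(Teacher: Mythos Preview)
Your proposal is correct and follows essentially the same route as the paper: fix the vertex set via \Cref{lemma:hyparr_projection}, parametrize $\pi_\bu(\bv_j)=\bv_j-\langle\bu,\bv_j\rangle\bu$ polynomially, choose a chamber-wide triangulation, and rerun the computation \eqref{eq:long-computation} with the augmented matrix $M_\Delta(\bu)$. Your added remarks on the constant sign of $\det M_\Delta(\bu)$ and on why the restriction to $S^{d-1}$ eliminates any $\|\bu\|$ denominators are helpful clarifications that the paper leaves implicit.
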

\begin{proof}
    Also in this case, the proof is a straightforward consequence of the proof of \Cref{thm:integral_polynomial_over_P}, after determining the parametrization of the vertices of the projection.
    
    Let $Q(\bu) = \pi_\bu(P)$ for some $\bu \in C$. By construction, the vertices of $P$ whose projections are vertices of $Q(\bu)$ are uniquely determined by $C$, and thus the triangulation $\mathcal{T}$ of $\pi_\bu(P)$ can be chosen for all $\bu \in C \cap S^{d-1}$.
    Let $\pi_\bu(\bv_1), \ldots, \pi_\bu(\bv_n)$ be the vertices of $Q(\bu)$ and let
    \[
    M_\Delta(\bu) = \begin{bmatrix}
    \pi_\bu(\bv_{j_2}) - \pi_\bu(\bv_{j_1}) \\
    \pi_\bu(\bv_{j_3}) - \pi_\bu(\bv_{j_1}) \\
    \vdots \\
    \pi_\bu(\bv_{j_d}) - \pi_\bu(\bv_{j_1}) \\
    \bu
    \end{bmatrix} ,
    \qquad
    \pi_\bu(\bv_j) = \bv_j - \langle \bu, \bv_j \rangle \bu,
    \]
    where $\pi_\bu(\bv_{j_1}), \ldots, \pi_\bu(\bv_{j_d})$ are the vertices of $\Delta\in \mathcal{T}$.
    Repeating the computation of \eqref{eq:long-computation}, the integral of $f$ over $Q(\bu)$ can be computed exactly as
    {\small
    \begin{equation*}
        \sum_{\Delta \in \mathcal{T}} \!\! \left| \det M_{\Delta}(\bu) \right| \sum_{\balpha} \frac{c_{\balpha}}{(|\balpha|+d-1)!} \!\sum_{{\substack{\bp \in \Z^d_{\geq 0} \\ \bp \leq \balpha}}}\!\!(-1)^{|\balpha|-|\bp|}
        \binom{\alpha_1}{p_1}\cdots \binom{\alpha_d}{p_d}
        \!\sum_{ \substack{ \bk \in \Z_{\geq 0}^{d}, \\ | \bk| = |\balpha|}}\!\!\langle \bp, \pi_\bu(\bv_{j_1})\rangle^{k_1}\cdots \langle \bp, \pi_\bu(\bv_{j_d})\rangle^{k_{d}}
    \end{equation*}}
    which is a polynomial in $u_1,\dots,u_d$, for $\bu\in C \cap S^{d-1}$.
\end{proof}
\begin{remark}
    If $D$ is the degree of $f$, then
    the degree of the polynomial $\int_{Q(\bu)} f(\bx) \dx$ is at most $2d(D + 1)-1$.
   When $f$ is a constant then our function is the support function of a zonotope, the projection body of $P$ \cite[Section 10.9]{Schneider:Bible}, and must therefore be linear in $\bu$. 
\end{remark}

\begin{example}\label{ex:chambers-projection}
    We continue \Cref{ex:chambers-radial,ex:chambers-translational} and compute the volume of all projections of the pentagon $P$.
    The polynomials describing $\vol(\pi_\bu(P))$, for $\bu\in S^{1}$, in each chamber of the arrangement $\chamrad(P^\circ)$ are shown in \Cref{fig:house_projection}.
\end{example}

There is also another possible approach, based on \cite[pp. 260-261]{lawrence91} and \cite[Theorem 1]{Filliman92}, which is however more complicated. It involves again polarity, but of the hyperplane sections of $P$. This alternative approach is also based on the fact that  $\pi_\bu(P) = \left( P^\circ \cap \bu^\perp\right)^\circ$,
and there are formulas to compute the volume of the polar of a polytope from the polytope itself. Lawrence formula is stated only for simple polytopes, but by Brion's theorem and signed cone decompositions methods \cite{Barvinok:CourseConvexity} we can obtain a more general formula which holds for any polytope. The second formula holds for non-codegenerate polytopes, so in order to apply it one should verify this property for all projections $\pi_\bu(P)$.
Because of these subtle conditions of genericity, we prefer to use here the above approach of triangulating directly the projection, which already implies a good bound on the complexity of associated optimization problems, as we will discuss at the end of \Cref{sec:polytime_complexity} when proving the complexity part of \Cref{thm:intro_algorithm} \ref{mainthm:projection}.

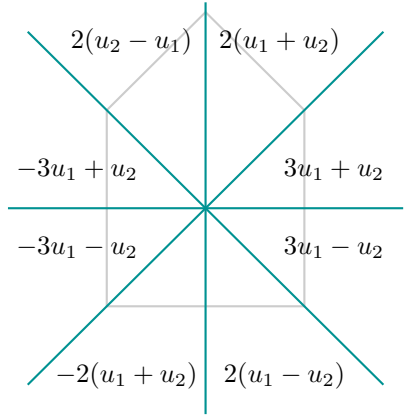
\begin{figure}[h]
    \centering
    \begin{tikzpicture}[scale = 1.3]
    \draw[thick, color=black!20] (-1,-1) -- (1,-1) -- (1,1) -- (0,2) -- (-1,1) -- (-1,-1)  ;
    \draw[thick, cb-green-sea] (0, 2.1) -- (0, -2.1) ;
    \draw[thick, cb-green-sea] (-1.8, 1.8) -- (1.8, -1.8) ;
    \draw[thick, cb-green-sea] (-2, 0) -- (2, 0) ;
    \draw[thick, cb-green-sea] (-1.8, -1.8) -- (1.8, 1.8) ;
    \node at (0.75,1.7) {\small $2(u_1+u_2)$};
    \node at (-0.8,-1.7) {\small $-2(u_1+u_2)$};
    \node at (1.3,0.4) {\small $3 u_1+u_2$};
    \node at (-1.3,-0.4) {\small $-3 u_1-u_2$};
    \node at (-0.75,1.7) {\small $2(u_2-u_1)$};
    \node at (0.8,-1.7) {\small $2(u_1-u_2)$};
    \node at (-1.3,0.4) {\small $-3 u_1+u_2$};
    \node at (1.3,-0.4) {\small $3 u_1-u_2$};
\end{tikzpicture}
    \caption{The volume of all the projections $\pi_\bu(P)$, for $\bu\in S^1$, of the pentagon $P$, as described in \Cref{ex:chambers-projection}.}
    \label{fig:house_projection}
\end{figure}

\section{Merging the two types of slices}\label{sec:mergingslices}

In \Cref{sec:rotational_slices,sec:translational_slices} we parametrically computed the integral of a polynomial $f$ over central and parallel hyperplane sections of a polytope $P$. In this section, we generalize the results to arbitrary affine sections, based on the following observation. There are two natural ways to consider all hyperplane sections. One can first choose a point in $\R^d$ as a center and then examine all (rotational) hyperplanes through that point; or one can first fix a direction and examine all the affine (parallel) hyperplanes orthogonal to it. In \Cref{sec:translating-the-rotation}, we take the first point of view, generalizing the approach of central sections from \Cref{sec:rotational_slices}. This yields an affine hyperplane arrangement $\regrad(P)$ in the space of translation vectors of $P$. On the other hand, in the spirit of the second point of view on hyperplane sections, generalizing the approach of parallel sections from \Cref{sec:translational_slices} yields a central hyperplane arrangement $\regtra(P)$ in the space of normal vectors of 
affine hyperplanes, as we discuss in \Cref{sec:rotating-the-translation}.

Another standard procedure to turn central hyperplane sections into affine ones uses homogenization. Namely, embed the $d$-dimensional polytope $P$ in $\R^{d+1}$ inside the hyperplane $x_{d+1}=1$. Then all $(d-1)$-dimensional slices of $P$ can be obtained by hyperplanes through the origin in $\R^{d+1}$. 
However, since in this setting the polytope $P$ is not full dimensional, the parametric computation of the volume of the section would involve finding a parametric orthonormal basis. This is not suitable for computations, where we prefer to stick to rational data. For these reasons, we do not discuss this approach in more detail.

\subsection{Translating the rotation}\label{sec:translating-the-rotation}

In \Cref{sec:rotational_slices} we fixed a polytope $P$ and considered hyperplanes through the origin with normal vectors $\bu \in S^{d-1}$, yielding a central hyperplane arrangement $\chamrad(P)$, in which each maximal open chamber consists of normal vectors $\bu$ such that the central hyperplane $\bu^\perp$ intersects a fixed set of edges of $P$. In this section we extend this construction, allowing to vary the position of the origin by considering translations $P+\bt$.

\begin{lemma}[{\cite[Lemma 3.2, Proposition 3.4]{BraMer:IntersectionBodies}}]\label{th:cocircuit-arrangement}
    Let $P \subset \R^d$ be a full-dimensional polytope and let $R$ be a maximal region of the affine hyperplane arrangement
    \[
        \regrad(P) = \{ \operatorname{aff}(-\bv_1, \ldots, -\bv_d) \,|\, \bv_1, \ldots, \bv_d \text{ are affinely independent vertices of } P\},
    \]
    called the cocircuit arrangement.
    Then, the following holds:
    \begin{enumerate}[label=\textup{(}\roman*\textup{)}]
        \item For all $\bt \in R$, the central hyperplane arrangements $\chamrad(P+\bt)$ define the same realizable oriented matroid $\chi$. Moreover, the hyperplanes defining a chamber $C(\bt)$ of $\chamrad(P+\bt)$ are parametrized linearly by $t_1,\dots,t_d$.
        
        \item Let $\bt,\bt' \in R$ and let $C(\bt) \subset \chamrad(P+\bt), C(\bt') \subset \chamrad(P+\bt')$ be maximal chambers such that the topes (cocircuits) of $\chi$ corresponding to $C(\bt), C(\bt')$ agree. Then, 
        \[
            \{e \text{ edge of P} \mid (e+\bt)\cap \bu^\perp \neq \emptyset  \} = \{e \text{ edge of P} \mid (e+\bt')\cap (\bu')^\perp \neq \emptyset  \}
        \]
        for any $\bu \in C(\bt)$, $\bu' \in C(\bt')$. In words, $\bu^\perp$ and $(\bu')^\perp$ intersect the same set of edges (of $P+\bt$ and $P+\bt'$, respectively).
    \end{enumerate}
\end{lemma}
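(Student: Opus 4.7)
The plan is to parametrize the central arrangement $\chamrad(P+\bt)$ by $\bt$ and show that the chirotope of the vector configuration $\{\bv+\bt : \bv \text{ vertex of } P\}$ is locally constant as $\bt$ varies in $\R^d$, with walls exactly the cocircuit hyperplanes. Both (i) and (ii) then follow from the standard dictionary between central hyperplane arrangements and the topes of the oriented matroid of their normal vectors.

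First I would unravel what causes the arrangement to change combinatorially. The hyperplane $(\bv+\bt)^\perp$ degenerates when $\bv+\bt = 0$, and more generally $d$ such hyperplanes fail to be in general position precisely when $\bv_1+\bt,\ldots,\bv_d+\bt$ are linearly dependent, i.e., when $\det[\bv_1+\bt,\ldots,\bv_d+\bt] = 0$. By multilinearity of the determinant and the vanishing of any term having $\bt$ in two or more columns, this determinant is an affine linear function of $\bt$. When $\bv_1,\ldots,\bv_d$ are affinely independent, its zero set is exactly the cocircuit hyperplane $\operatorname{aff}(-\bv_1,\ldots,-\bv_d)$: the determinant vanishes iff $\bv_1+\bt,\ldots,\bv_d+\bt$ lie in a common linear hyperplane iff $0 \in \operatorname{aff}(\bv_1+\bt,\ldots,\bv_d+\bt)$ iff $-\bt \in \operatorname{aff}(\bv_1,\ldots,\bv_d)$. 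For affinely dependent $d$-tuples, the shifted vectors always fit in an affine subspace of dimension at most $d-2$, whose linear span is at most $(d-1)$-dimensional, so the determinant vanishes identically and contributes no new hyperplane.

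Now fix a region $R$ of $\regrad(P)$. Each determinant $\det[\bv_{i_1}+\bt,\ldots,\bv_{i_d}+\bt]$ has constant sign on $R$, since it is continuous and vanishes only on cocircuit hyperplanes (or identically). Collecting these signs over all $d$-subsets of vertices yields the chirotope of a realizable oriented matroid $\chi$ on $\{\bv+\bt : \bv \text{ vertex of } P\}$, and $\chi$ is independent of $\bt \in R$. The normals $\bv+\bt$ defining $\chamrad(P+\bt)$ are visibly linear in $\bt$, which together gives (i). For (ii), I would invoke the bijection between maximal chambers of $\chamrad(P+\bt)$ and topes of $\chi$: $\bu \in C(\bt)$ iff $\sgn\langle \bu, \bv+\bt\rangle$ matches the prescribed sign at $\bv$ for every vertex $\bv$. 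For an edge $e=\conv(\ba,\bb)$ the intersection $(e+\bt)\cap \bu^\perp$ is nonempty iff $\langle \bu, \ba+\bt\rangle$ and $\langle \bu, \bb+\bt\rangle$ have opposite signs, a condition purely on the tope. Hence if $C(\bt)$ and $C(\bt')$ carry the same tope, they select the same set of edges of $P$.

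The main obstacle I anticipate is the boundary bookkeeping: carefully handling affinely dependent $d$-tuples of vertices where the determinant vanishes identically, verifying that ``determinant zero'' coincides with ``$\bt$ on the cocircuit hyperplane'' as an equivalence rather than only an inclusion, and invoking the oriented-matroid/tope dictionary in a way that makes ``same tope'' a well-defined comparison across different $\bt,\bt' \in R$ rather than just two instances of the same abstract combinatorial object.
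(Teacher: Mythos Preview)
The paper does not prove this lemma; it is quoted from \cite[Lemma 3.2, Proposition 3.4]{BraMer:IntersectionBodies} and used as a black box. Your argument is correct and is essentially the natural one: the key computation that $\det[\bv_1+\bt,\ldots,\bv_d+\bt]$ is affine-linear in $\bt$ with zero set $\operatorname{aff}(-\bv_1,\ldots,-\bv_d)$ (for affinely independent $\bv_i$) is exactly what identifies the cocircuit arrangement as the locus where the chirotope changes, and the tope interpretation of edge-intersection is immediate. Your anticipated ``boundary bookkeeping'' is not a real obstacle---the affinely dependent $d$-tuples contribute identically-zero chirotope values, which are part of the oriented matroid data and remain constant trivially, and the comparison of topes across $\bt,\bt'\in R$ is well-posed precisely because the ground set is the vertex set of $P$, independent of $\bt$.
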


We refer to $\regrad(P)$ as the \emph{cocircuit arrangement} of $P$, and a \emph{region} $R$ is an open connected component of $\R^d \setminus \regrad(P)$. As before, in order to simplify the notation, we will write $R\subset~\!\!\regrad(P)$ for a region of the arrangement. 
Notice that we are dealing at the same time with two distinct hyperplane 
arrangements: $\regrad(P)$ and $\chamrad(P)$. The names of the arrangements 
correspond to the names we use for their complement: the connected components of $\R^d \setminus\chamrad(P)$, as defined in \Cref{sec:rotational_slices}, are called \emph{chambers}, whereas the connected components of $\R^d \setminus\regrad(P)$ are called \emph{regions}. 
Putting the two arrangements together, we can parametrize all hyperplanes in $\R^d$. 
A point $\bt$ in a region of $\regrad(P)$ fixes a translation of $P$ or, analogously, 
it fixes the position of the origin with respect to the polytope. Then, 
the chambers of $\chamrad(P+\bt)$ parametrize the hyperplanes through the chosen origin. 
When we change $\bt$, we capture new hyperplane sections.

\begin{example}\label{ex:regions-radial}
    We continue the \Cref{ex:chambers-radial,ex:integration-radial}.
    The cocircuit arrangement $\regrad(P)$ consists of $\binom{5}{2} = 10$ hyperplanes, subdividing $\R^2$ into $11$ bounded and $26$ unbounded regions, as shown in \Cref{fig:regions-radial-cocircuit}.
    Let 
    \[
    R = \{\bt \in \R^2 \mid 
    \langle \sma -3 \\ 1 \strix, \bt \rangle  > -2, \ \langle \sma-1 \\ -1\strix ,\bt \rangle > 0, \ \langle \sma 3 \\ 1 \strix, \bt \rangle  > -2, \
     \langle \sma 1 \\  -1 \strix, \bt \rangle > 0, \  \langle \sma 0 \\ 1 \strix, \bt \rangle  > -1 \}
    \]
    be the pentagonal shaded region of $\regrad(P)$. Note that $R$ contains the translation 
    vector $\bt = -(\frac{1}{3},\frac{1}{2})$ from \Cref{ex:chambers-radial}.
    For all $\bt \in R$ the central arrangement $\chamrad(P+\bt)$ defines the same oriented matroid. In particular, all central hyperplane arrangements have the same combinatorial structure, and are parametric in $t_1,t_2$, as shown in \Cref{fig:regions-radial-central}. Varying $\bt$ induces a rotation of the hyperplanes defined by $\langle \bv_i + \bt , \bx \rangle = 0$, where $\bv_i$ are the vertices of $P$. 
    Note that the (strict) inequalities defining $R$ guarantee that under these rotations no two hyperplanes in $\chamrad(P+\bt)$ collapse.
    \begin{figure}
        \centering
        \begin{subfigure}[t]{0.45\textwidth}
            \centering
            \begin{tikzpicture}
    \fill[black!20] (1/3, -1) -- (1/2, -1/2) -- (0, 0) -- (-1/2, -1/2) -- (-1/3, -1) ;
    \draw[thick] (1, 3/2) -- (1, -5/2) ;
    \draw[thick] (7/6, 3/2) -- (-1/6, -5/2) ;
    \draw[thick] (2, 1) -- (-2, 1) ;
    \draw[thick] (3/2, 3/2) -- (-2, -2) ;
    \draw[thick] (2, 0) -- (-1/2, -5/2) ;
    \draw[thick] (2, -2) -- (-3/2, 3/2) ;
    \draw[thick] (2, -1) -- (-2, -1) ;
    \draw[thick] (-7/6, 3/2) -- (1/6, -5/2) ;
    \draw[thick] (1/2, -5/2) -- (-2, 0) ;
    \draw[thick] (-1, 3/2) -- (-1, -5/2) ;
\end{tikzpicture}
            \caption{The cocircuit arrangement $\regrad(P)$. \\ The region $R$ is shaded in gray.}
            \label{fig:regions-radial-cocircuit}
        \end{subfigure}
        \begin{subfigure}[t]{0.45\textwidth}
            \centering
            \begin{tikzpicture}[scale = 1.5]
    \draw[thick] (1, -8/9) -- (-1, 8/9);
    \node[anchor=west] at (1,-8/9) {$\langle \bv_1 + \bt, \bx \rangle = 0$};
    \draw[thick] (3/8, 1) -- (-3/8, -1) ;
    \node[anchor=south] at (3/8,1) {$\langle \bv_5 + \bt, \bx \rangle = 0$};
    \draw[thick] (1, 2/9) -- (-1, -2/9) ;
    \node[anchor=west] at (1,2/9) {$\langle \bv_4 + \bt, \bx \rangle = 0$};
    \draw[thick] (1, 4/9) -- (-1, -4/9) ;
    \node[anchor=west] at (1,5/9) {$\langle \bv_2 + \bt, \bx \rangle = 0$};
    \draw[thick] (-3/4, 1) -- (3/4, -1) ;
    \node[anchor=north] at (3/4,-1) {$\langle \bv_3 + \bt, \bx \rangle = 0$};
    \node at (-0.6,0.15) {$C(\bt)$};
\end{tikzpicture}
            \caption{The (parametric) central \\ arrangement $\chamrad(P+ \bt)$ for $\bt \in R$.}
            \label{fig:regions-radial-central}
        \end{subfigure}
        \caption{The hyperplane arrangements from \Cref{ex:regions-radial}.}
    \end{figure}
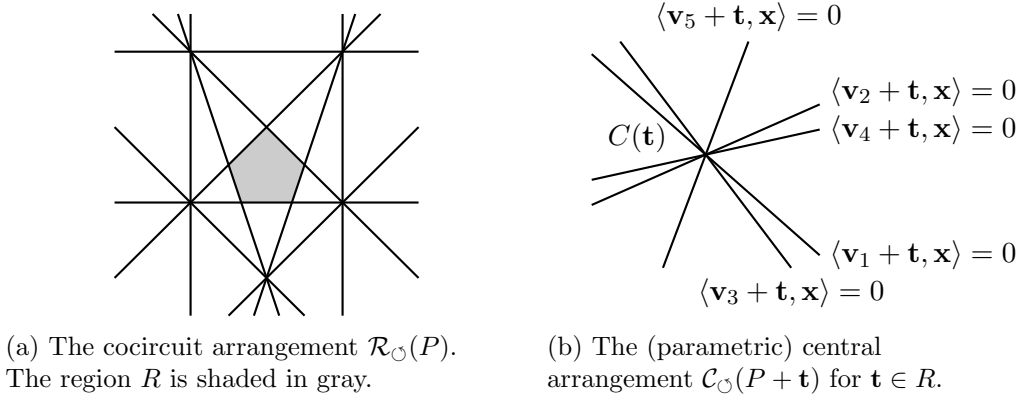
\end{example}

We have used hyperplane arrangements to identify the different slicing chambers. We now want to count them, using classical properties of arrangements of hyperplanes in $d$-dimensional Euclidean space. For the theory of enumeration of faces and cells in a hyperplane arrangement we refer the reader to \cite{Stanley:IntroHyperplaneArr,Zaslavsky}.
As we will see in \Cref{sec:complexity}, this allows us to answer any purely combinatorial question regarding the slices of a polytope, in polynomial time.

\begin{prop}\label{prop:countingchamberscocircuit_intersectionbody}
    Let $P\subset \R^d$ be a polytope with $n$ vertices. In the cocircuit arrangement $\regrad(P)$ there are at most $\binom{n}{d}$ affine hyperplanes. There are at most $O(n^{d^2})$ $d$-dimensional polyhedral regions and the total number of chambers (including lower-dimensional cells)
    in the associated cocircuit arrangement is bounded by $O(n^{d^2} 2^d)$. Thus, the number of slicing 
    chambers (counting also cells of lower dimension for both regions and chambers) is $O(n^{d^2+d} 2^{2d})$.
\end{prop}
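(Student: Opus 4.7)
The plan is to estimate each quantity in turn, combining a direct count of the defining hyperplanes with classical cell-counting results (Zaslavsky) for affine and central hyperplane arrangements.

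First, the bound on the number of hyperplanes is immediate from the definition: every hyperplane of $\regrad(P)$ is the affine span of some $d$-subset of (negated) vertices of $P$, and there are $\binom{n}{d}$ such subsets. Second, for the number of full-dimensional regions I would invoke Zaslavsky's theorem, which says that an arrangement of $m$ hyperplanes in $\R^d$ has at most $\sum_{k=0}^d \binom{m}{k} = O(m^d)$ regions; substituting $m = \binom{n}{d} = O(n^d)$ gives $O(n^{d^2})$.

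For the total number of faces of $\regrad(P)$ (of all dimensions), I would use the standard bound of $O(m^d\,2^d)$: each face is determined by a flat of the arrangement (at most $\sum_{k=0}^{d}\binom{m}{d-k} = O(m^d)$ of them) together with a sign pattern of length at most $d$ on the hyperplanes meeting that flat transversally, accounting for the factor $2^d$. Substituting $m = O(n^d)$ produces the bound $O(n^{d^2}\,2^d)$.

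Finally, for the slicing chambers the key input is \Cref{th:cocircuit-arrangement}\,\textup{(i)}, which guarantees that the combinatorial type of $\chamrad(P+\bt)$ is constant as $\bt$ varies over a region of $\regrad(P)$. Consequently the set of slicing chambers (together with their lower-dimensional faces) admits a product-type decomposition indexed by pairs (face of $\regrad(P)$, face of $\chamrad(P+\bt)$). Since $\chamrad(P+\bt)$ is a central arrangement of at most $n$ hyperplanes in $\R^d$, the same face-counting argument bounds its total number of faces by $O(n^d\,2^d)$. Multiplying by the $O(n^{d^2}\,2^d)$ faces of $\regrad(P)$ yields $O(n^{d^2+d}\,2^{2d})$. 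The only delicate point is the validity of this product decomposition, which is exactly what \Cref{th:cocircuit-arrangement} secures; the remainder is bookkeeping with classical arrangement identities.
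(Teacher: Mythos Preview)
Your proposal is correct and follows essentially the same route as the paper: count the defining hyperplanes directly, apply Zaslavsky's theorem for the top-dimensional regions, pass to all faces via a $2^d$ factor, and then multiply by the face count of the central arrangement $\chamrad(P+\bt)$ using \Cref{th:cocircuit-arrangement} to justify the product decomposition. The only minor difference is in how the $2^d$ factor is obtained: the paper invokes the inequality $f_k \leq \binom{d}{k} f_d$ from \cite{KFukudaetal-hyperpcount1991} and sums over $k$, whereas your ``flat plus sign pattern of length at most $d$'' argument is a bit loose as stated (the number of hyperplanes transverse to a given flat need not be at most $d$), so you may want to cite that inequality instead.
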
   

\begin{proof}
We denote by $f_k$ the number of polyhedral faces of dimension $k$. By the classic Zaslavsky's theorem (see \cite{Zaslavsky} or \cite[Proposition 2.4]{Stanley:IntroHyperplaneArr}), the number of top-dimensional polyhedral regions $f_d(\regrad(P))$ of the arrangement is 
\[
f_d(\regrad(P)) \leq \sum_{i=1}^d \binom{\binom{n}{d}}{i} \sim O(n^{d^2}).
\]
Note that one can write precise formulas, when the associated matroid is known (see \cite{Stanley:IntroHyperplaneArr}), but the generic case provides an upper bound.
Moreover, it is well-known that one can bound the number of $k$-faces in the 
arrangement by the inequality (see \cite{KFukudaetal-hyperpcount1991}):
$$f_k(\regrad(P)) \leq \binom{d}{k} f_d(\regrad(P)), \quad 0 \leq k \leq d,
$$
which implies that the desired bound on all polyhedral regions (top dimensional or not) is
$O(n^{d^2}2^d)$. For each such region we have a corresponding hyperplane arrangement $\chamrad(P+\bt)$ to consider, which has $n$ hyperplanes and thus at most 
$\sum_{i=1}^d \binom{n}{i}$
full-dimensional slicing chambers. Therefore, for each choice of region, we get a number of slicing chambers of order $O(n^{d})$ or, counting also its lower-dimensional faces, of order $O(n^d2^d)$. This number multiplied by the upper bound for the number of cells of $\regrad(P)$, gives a bound of order $O(n^{d^2+d} 2^{2d})$.
\end{proof}

These pairs consisting of cocircuit arrangement and the corresponding central arrangement of vertices (after a choice of center) detect all the changes in the combinatorial structure of the hyperplane sections of $P$. In fixed dimension $d$, the number of regions and of faces of these regions is polynomial in the number of vertices of $P$, and on each of these cells the hyperplane section $P\cap H$ has a given combinatorial type.

We can extend \Cref{thm:integral_polynomial_over_P} to take into account translations when integrating a polynomial over hyperplane sections of $P$. This is an extension of \cite[Theorem 3.5]{BraMer:IntersectionBodies}, which contains an analogous statement for the volume.

\begin{theorem}\label{thm:piecewise_rat_translate_rotation}
Let $P\subset \R^d$ be a full-dimensional polytope and let $f(\bx) = \sum_{\balpha} c_{\balpha} x^{\balpha}$ be a polynomial. Let $R \subset \R^d$ be a region of the cocircuit arrangement $\regrad(P)$ and let $C(\bt)\subset \R^d$ be a chamber of the central arrangement $\chamrad(P+\bt)$, for $\bt \in R$. Restricted to $\bt \in R$ and $\bu\in C(\bt)\cap S^{d-1}$, the integral $\int_{(P+\bt) \cap \bu^\perp} f(\bx) \dx$ is a rational function in variables $t_1,\dots,t_d$, $u_1,\dots,u_d$.
\end{theorem}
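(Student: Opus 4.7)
The plan is to recycle the computation carried out in the proof of \Cref{thm:integral_polynomial_over_P} and simply keep track of how each ingredient depends on the translation vector $\bt$ in addition to the normal direction $\bu$. The key new input is \Cref{th:cocircuit-arrangement}, which guarantees that, as long as $\bt$ stays in a region $R$ of $\regrad(P)$ and $\bu$ stays in the corresponding chamber $C(\bt)$ of $\chamrad(P+\bt)$, the central hyperplane $\bu^\perp$ cuts a \emph{fixed} set of edges of $P+\bt$. This combinatorial invariance is precisely what allowed us to use a single triangulation in the rotational-only setting.

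First I would set up the parametrization of the vertices of the section. For each edge $\conv(\ba_i,\bb_i)$ of $P$ that is intersected by $\bu^\perp$ after translation by $\bt$, the corresponding vertex of $(P+\bt)\cap\bu^\perp$ is
\begin{equation*}
\bv_i(\bu,\bt) \;=\; \frac{\langle \bb_i+\bt,\bu\rangle(\ba_i+\bt) \;-\; \langle \ba_i+\bt,\bu\rangle(\bb_i+\bt)}{\langle \bb_i-\ba_i,\bu\rangle},
\end{equation*}
which is a rational function in $(\bt,\bu)$ whose denominator does not vanish on $R \times (C(\bt)\cap S^{d-1})$ (the edge is strictly crossed, so $\langle \bb_i-\ba_i,\bu\rangle\neq 0$). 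By \Cref{th:cocircuit-arrangement} the index set of such edges is constant, so a triangulation $\mathcal T$ of $(P+\bt)\cap\bu^\perp$ using only these vertices can be chosen once and for all over the cell $R\times C(\bt)$. The determinants $\det M_\Delta(\bu,\bt)$ built from the $\bv_i(\bu,\bt)$ and $\bu$ are then rational in $(\bt,\bu)$, and by the oriented-matroid invariance in \Cref{th:cocircuit-arrangement}(i) their signs are constant on the cell, so $|\det M_\Delta|$ is itself a rational function there.

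Next I would replay the computation in \eqref{eq:long-computation}: decompose the monomials of $f$ into powers of linear forms via \Cref{lem:monomial_to_linear_power}, split the integral over $\mathcal T$, and evaluate each $\int_\Delta \langle \bp,\bx\rangle^{|\balpha|}\dx$ by \Cref{lem:integrals_linear_power_exp}. The latter produces a factor $(d-1)!\vol(\Delta)=|\det M_\Delta(\bu,\bt)|$ times a polynomial expression in the inner products $\langle \bp,\bv_{j_\ell}(\bu,\bt)\rangle$. Since each $\bv_{j_\ell}(\bu,\bt)$ is rational in $(\bt,\bu)$, all these inner products are rational in $(\bt,\bu)$, and hence so is the total integrand. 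Summing over $\Delta\in\mathcal T$, over $\bp\le\balpha$, and over $\balpha$ preserves rationality, giving
\begin{equation*}
\int_{(P+\bt)\cap\bu^\perp} f(\bx)\dx \;\in\; \R(t_1,\dots,t_d,u_1,\dots,u_d),
\end{equation*}
as claimed.

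I expect no substantial obstacle beyond bookkeeping; the only genuinely new step is the joint-parametric statement about triangulations and determinant signs, and this is exactly what \Cref{th:cocircuit-arrangement} buys us. Thus the hard work was already done in proving that lemma (and \Cref{thm:integral_polynomial_over_P}); here it suffices to weave $\bt$ into the variables while observing that none of the denominators encountered (edge ``crossing weights'' $\langle \bb_i-\ba_i,\bu\rangle$ and determinant expressions) vanish on the open cell $R\times C(\bt)$.
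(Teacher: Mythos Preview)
Your proposal is correct and follows essentially the same route as the paper: invoke \Cref{th:cocircuit-arrangement} for combinatorial invariance, parametrize the vertices $\bv_j(\bt,\bu)$ of $(P+\bt)\cap\bu^\perp$ rationally, fix a triangulation, and rerun the computation \eqref{eq:long-computation} from \Cref{thm:integral_polynomial_over_P}. If anything, you are slightly more explicit than the paper about why the sign of $\det M_\Delta$ is constant and why the denominators do not vanish on the open cell.
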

\begin{proof}
    By \Cref{th:cocircuit-arrangement}, the chambers $C(\bt)$ of $\chamrad(P+\bt)$ are linearly dependent on $\bt$, when $\bt \in R$. Let $\bu\in C(\bt)$. As in the proof of \Cref{thm:integral_polynomial_over_P}, we triangulate $(P+\bt) \cap \bu^\perp$, obtaining
    \[
    M_\Delta(\bt,\bu) = \begin{bmatrix}
    \bv_{j_2}(\bt,\bu) - \bv_{j_1}(\bt,\bu) \\
    \bv_{j_3}(\bt,\bu) - \bv_{j_1}(\bt,\bu) \\
    \vdots \\
    \bv_{j_{d}}(\bt,\bu) - \bv_{j_1}(\bt,\bu) \\
    \bu
    \end{bmatrix},\]
    \begin{align*}
    \bv_{j}(\bt,\bu) &= \frac{\langle \bb_{j} + \bt, \bu \rangle (\ba_{j} + \bt) - \langle \ba_{j} + \bt, \bu \rangle (\bb_{j} + \bt)}{\langle \bb_{j}-\ba_{j}, \bu \rangle}\\
    &= \frac{\langle \bb_{j} +\bt , \bu \rangle \ba_{j} - \langle \ba_{j} + \bt , \bu \rangle \bb_{j}}{\langle \bb_{j}-\ba_{j}, \bu \rangle}+\bt, 
    \end{align*}
    where $\bv_{j}(\bt,\bu)$ denotes a vertex of a simplex $\Delta$ in the triangulation $\mathcal T$, which lies on the interior of the edge $\conv(\ba_j + \bt, \bb_j + \bt)$ of $P + \bt$.
    Repeating the computation of \eqref{eq:long-computation} in the proof of \Cref{thm:integral_polynomial_over_P} yields
    {\small
    \begin{align*}
        \int_{(P+\bt) \cap \bu^\perp} f(\bx) \dx  
        = \sum_{\Delta \in \mathcal{T}} \left| \det M_{\Delta}(\bt,\bu) \right| 
        \sum_{\balpha} &\frac{c_{\balpha}}{(|\balpha|+d-1)!} \sum_{{\substack{\bp \in \Z^d_{\geq 0} \\ \bp \leq \balpha}}}(-1)^{|\balpha|-|\bp|}
        \binom{\alpha_1}{p_1}\cdots \binom{\alpha_d}{p_d} 
        \cdot
         \\
        & \qquad\quad \cdot \sum_{ \substack{ \bk \in \Z_{\geq 0}^{d}, \\ | \bk| = |\balpha|}}\langle \ p, \ \bv_{j_1}(\bt,\bu) \ \rangle^{k_1}\cdots \langle \ p, \ \bv_{j_d}(\bt,\bu) \ \rangle^{k_{d}},
    \end{align*}}
    which is a rational function in $t_1,\dots,t_d$, $u_1,\dots,u_d$.
\end{proof}
This result implies that $\int_{(P+\bt) \cap \bu^\perp} f(\bx) \dx$ is a piecewise rational function. The domains over which it is rational are pairs of polyhedra in the following sense. Each region $R$ is a polyhedron in $\R^d$, and each chamber $C(\bt)$ is a polyhedron in $\R^d$ parametrized by $\bt$. In particular, the domain of rationality is a semialgebraic subset of $\R^{2d}$ with the following shape:
\[
\{ (\bt,\bu)\in \R^{2d} \,|\, \bt \in R, \bu \in C(\bt)\}\cap \left(\R^d \times S^{d-1}\right), 
\]
for some region $R\subset \regrad(P)$ and chamber $C(\bt)\subset \chamrad(P+\bt)$, where $\bt \in R$. Notice that the condition $\bu \in C(\bt)$ is quadratic in the variables $t_1,\dots,t_d$, $u_1,\dots,u_d$, see \Cref{ex:regions-radial}. We can bound the degree of the polynomials appearing in the integration formula, independently of the chamber and region they come from, as follows.

\begin{prop}\label{prop:deg_rational_pieces}
    In each of slicing chambers the rational function $\int_{(P+\bt) \cap \bu^\perp} f(\bx) \dx$ respects the following degree bounds:
    \begin{equation*}
        \begin{aligned}
        \deg\,  \operatorname{numerator} \left( \int_{(P+\bt) \cap \bu^\perp} f(\bx) \dx \right) &\leq \Big(f_1(P) - (d-1)\Big) ( D+d-1 ) +d(D+1), \\
        \deg\,  \operatorname{denominator} \left( \int_{(P+\bt) \cap \bu^\perp} f(\bx) \dx \right) &\leq \Big(f_1(P) - (d-1)\Big) ( D+d-1 ).
        \end{aligned}
    \end{equation*}
    Here $f_k(P)$ denotes the number of $k$-dimensional faces of $P$ and $D = \deg f$.
\end{prop}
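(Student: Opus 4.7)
The plan is to directly track the degree contributions in the explicit rational-function formula obtained at the end of the proof of \Cref{thm:piecewise_rat_translate_rotation}. All denominators of the integral originate from the linear forms $B_e(\bu) = \langle \bb_e - \ba_e, \bu\rangle$ of degree $1$, indexed by edges $e = \conv(\ba_e, \bb_e)$ of $P$ intersected by the section hyperplane, since the corresponding section vertex takes the form $\bv_e(\bt,\bu) = A_e(\bt,\bu)/B_e(\bu) + \bt$ with $A_e$ of degree $2$, and this is the sole source of denominators in the formula. Let $E$ denote the set of intersected edges; then $d \le |E| \le f_1(P)$, since the section is a $(d-1)$-dimensional polytope and therefore has at least $d$ vertices.

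Next, I would bound, for a fixed simplex $\Delta$ of the triangulation $\mathcal T$ with vertices $\bv_{j_1},\dots,\bv_{j_d}$ and a fixed multi-index $\balpha$ with $|\balpha|\leq D$, the exponent of each $B_{j_\ell}$ appearing in the corresponding summand. From the denominator $B_{j_1}^{d-1}\prod_{\ell\ge 2}B_{j_\ell}$ of $\det M_\Delta$ made explicit in the proof of \Cref{thm:piecewise_rat_translate_rotation}, and from the denominator $\prod_\ell B_{j_\ell}^{k_\ell}$ of $\prod_\ell\langle \bp,\bv_{j_\ell}\rangle^{k_\ell}$ with $|\bk|=|\balpha|$, each $B_{j_\ell}$ receives exponent at most $|\balpha|+d-1\le D+d-1$ in a single summand. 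The denominator of each summand is thus a product of powers of $B_e$'s for $e\in V(\Delta)\subseteq E$ with each power $\le D+d-1$.

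To combine all summands, I would choose the pyramidal triangulation of the section with apex at a distinguished vertex $\bv_{j_0}$ of a base $(d-1)$-simplex $\Delta_0$ of the section; taking common denominator over all simplices of $\mathcal T$ and all $|\balpha|\le D$ gives a denominator dividing $\prod_{e\in E}B_e^{D+d-1}$. The $(d-1)$ savings in the final bound comes from the observation that the $d$ base vertices of $\Delta_0$ share their $B_e$-factors through the pyramidal structure with $|\det M_\Delta|$ for every other simplex, so that $d-1$ of these base factors can be absorbed into numerator contributions from the expansion of the determinant (the $\bu$-row of $M_\Delta$ yielding a cancellation against one base $B_e$ per remaining simplex). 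The effective independent denominator count then drops to $|E|-(d-1)\le f_1(P)-(d-1)$, and multiplying by the exponent $D+d-1$ gives the stated bound $(f_1(P)-(d-1))(D+d-1)$ on the denominator. The numerator bound follows by bookkeeping: after bringing each summand to this common denominator, the numerator degree equals the per-summand numerator degree plus the degree of the $B_e$-factors introduced by cross-multiplication, and the $d$ base vertices of $\Delta_0$ contribute exactly $d(D+1)$ extra degree to the numerator, yielding the bound $(f_1(P)-(d-1))(D+d-1)+d(D+1)$.

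The main obstacle will be justifying the $(d-1)$ reduction in the denominator count; this is a careful combinatorial argument that exploits the pyramidal triangulation structure of the $(d-1)$-dimensional section and requires showing explicitly how $d-1$ of the $B_e$-factors from the base simplex $\Delta_0$ cancel between numerator and denominator when summands are pulled back to the common denominator. Everything else is straightforward degree bookkeeping of the formula already derived.
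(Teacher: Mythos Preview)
Your degree bookkeeping for a single summand is essentially correct: each $B_{j_\ell}$ appears with exponent at most $D+d-1$, and after taking a common denominator over all simplices the denominator divides $\prod_{e\in E}B_e^{D+d-1}$, giving degree $|E|(D+d-1)$. The problem is your mechanism for the $(d-1)$ savings. You claim that $d-1$ of the $B_e$-factors from a base simplex $\Delta_0$ cancel against numerator contributions, so that the effective denominator count drops to $|E|-(d-1)$. This is false already in dimension $d=2$ with $D=0$: there $|E|=2$, your bound would give denominator degree $(2-1)(0+1)=1$, but \Cref{ex:radial-integral-parametric} exhibits the reduced fraction
\[
\vol\big((P+\bt)\cap \bu^\perp\big)=-\frac{t_1u_1+t_2u_2+3u_1-u_2}{u_1(u_1-u_2)},
\]
whose denominator has degree $2$. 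No cancellation of the kind you describe occurs; the vague appeal to ``the $\bu$-row of $M_\Delta$ yielding a cancellation against one base $B_e$'' has no content, since that row carries no $B_e$-factors at all.

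The paper obtains the $(d-1)$ in a completely different way. It keeps the honest denominator degree $|E|(D+d-1)$ with $|E|=f_0\big((P+\bt)\cap\bu^\perp\big)$, observes that every summand in the simplex decomposition has the same gap $d(D+1)$ between numerator and denominator degree (so the sum does too), and then invokes the combinatorial bound $f_0\big((P+\bt)\cap\bu^\perp\big)\le f_1(P)-(d-1)$ from \cite[Theorem~5.6]{BBMS:IntersectionBodiesPolytopes}. In other words, the $(d-1)$ comes from a bound on the number of vertices a hyperplane section can have, not from any algebraic cancellation in the integration formula. Your argument needs this external combinatorial input; the cancellation route does not work.
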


\begin{proof}
    We want to study the degree of numerator and denominator of the piecewise rational function $\int_{(P+\bt) \cap \bu^\perp} f(\bx) \dx$. We fix a region $R\subset \regrad(P)$ and a chamber $C(\bt)\subset\chamrad(P+\bt)$, for $\bt \in R$. 
    By the proof above, the integral of $f$ over the hyperplane section $(P+\bt) \cap \bu^\perp$ is the sum over all simplices $\Delta \in \mathcal{T}$ of $\left| \det M_{\Delta}(\bt,\bu) \right|$ multiplied by
    \begin{equation}\label{eq:interior_sum_degree}
    \small
        \sum_{\balpha} \frac{c_{\balpha}}{(|\balpha|+d-1)!}
    \sum_{{\substack{\bp \in \Z^d_{\geq 0} \\ \bp \leq \balpha}}}(-1)^{|\balpha|-|\bp|}
        \binom{\alpha_1}{p_1}\cdots \binom{\alpha_d}{p_d}
        \sum_{ \substack{ \bk \in \Z_{\geq 0}^{d}, \\ | \bk| = |\balpha|}}\langle \ \bp, \ \bv_{j_1}(\bt,\bu) \ \rangle^{k_1}\cdots \langle \ \bp, \ \bv_{j_d}(\bt,\bu) \ \rangle^{k_{d}},
    \end{equation}
    where $\bv_{j_1}(\bt,\bu),\ldots,\bv_{j_d}(\bt,\bu)$ are the vertices of the simplex $\Delta$. Since the denominator of $\bv_{j_i}(\bt,\bu)$ is the same in every coordinate of the vector, we can pull it out of the scalar product with $\bp$; moreover, denoting $D = \deg f$, we have that $k_i\leq D$ for all $i=1,\dots,d$. Therefore, \eqref{eq:interior_sum_degree} becomes
    \begin{equation*}
        \frac{\varphi_\Delta(\bt,\bu)}{\prod_i \langle \bu, \bb_{j_i}-\ba_{j_i}\rangle^{D}}
    \end{equation*}
    where $\bv_{j_i}$ belongs to the edge of $P$ with extrema $\ba_{j_i},\bb_{j_i}$, and $\varphi_\Delta$ is a polynomial.

    It is not hard to see that if we sum a bunch of rational functions such that the degree of their numerator minus the degree of their denominator is constant for all summands, then also the difference of the degrees of numerator and denominator of the sum of these functions is going to be that same number. Using this elementary fact, and noticing that for every summand in \eqref{eq:interior_sum_degree} the degree of the numerator is twice the degree of the denominator, we can deduce that $\deg\varphi_\Delta(\bt,\bu) = 2 \deg \left(\prod_i \langle \bu, \bb_{j_i}-\ba_{j_i}\rangle^{D}\right) =  2 \cdot d\cdot D$.

    Let us now describe the rational function $\left| \det M_{\Delta}(\bt,\bu) \right|$. All but the last row of the matrix have entries that are quotients of a cubic and a quadratic polynomial, in $t_1,\ldots,t_d,u_1,\ldots,u_d$. The denominator of each entry of the $(i-1)$th row is $ \langle \bu, \bb_{j_i}-\ba_{j_i}\rangle \langle \bu, \bb_{j_1}-\ba_{j_1}\rangle$ for all $i=2,\ldots,d$. 
    The determinant is thus the quotient of a polynomial of degree $3(d-1)+1$ and another polynomial of degree $2(d-1)$.
    Then,
    \[
    \left| \det M_{\Delta}(\bt,\bu) \right| \frac{\varphi_\Delta(\bt,\bu)}{\prod_i \langle \bu, \bb_{j_i}-\ba_{j_i}\rangle^{D}} = \frac{\phi_\Delta(\bt,\bu)}{\langle \bu, \bb_{j_1}-\ba_{j_1}\rangle^{D+d-1}\prod_{i\neq 1} \langle \bu, \bb_{j_i}-\ba_{j_i}\rangle^{D+1}},
    \]
    for some other polynomial $\phi_\Delta$ of degree $2 d D + 3(d-1)+1$. Notice that the difference between the degrees of numerator and denominator is $d(D+1)$, independently of the simplex $\Delta$.

    We are now ready to sum over the simplices in the triangulation. Since the summands have similar denominators, with some possible redundancy, we obtain the following expression for $\int_{(P+\bt)\cap \bu^\perp} f(\bx) \dx$:
    \begin{equation}\label{eq:sum_degree}
        \sum_{\Delta\in\mathcal{T}} \frac{\phi_\Delta(\bt,\bu)}{\langle \bu, \bb_{j_1}-\ba_{j_1}\rangle^{D+d-1}\prod_{i\neq 1} \langle \bu, \bb_j-\ba_j\rangle^{D+1}} = \frac{\psi(\bt,\bu)}{\prod_{j} \langle \bu, \bb_j-\ba_j\rangle^{D+d-1}},
    \end{equation}
    where the product now runs over all vertices of $(P+\bt)\cap \bu^\perp$. Since for all the summands in the left-hand side of \eqref{eq:sum_degree} we know that the difference between the degrees of numerator and denominator is $d$, we deduce that $\deg \psi (\bt,\bu) = (f_0((P+\bt)\cap \bu^\perp))(D+d-1) + d(D+1)$. Therefore, using \cite[Theorem 5.6]{BBMS:IntersectionBodiesPolytopes} to bound $f_0((P+\bt)\cap \bu^\perp)$, we obtain the degree bounds claimed in the statement, valid in every chamber of every region.
\end{proof}

\begin{example}\label{ex:radial-integral-parametric}
    We continue \Cref{ex:regions-radial}. Let $R$ be the pentagonal shaded region of the cocircuit arrangement $\regrad(P)$ in \Cref{fig:regions-radial-cocircuit} and let 
    \[
    C(\bt) = \{ \bx \in \R^2 \mid \langle \bv_1 + \bt, \bx \rangle > 0, \langle \bv_4 + \bt, \bx \rangle > 0  \}.
    \]
    The integral over the constant function $1$ for $\bt \in R$ and $\bu \in C(\bt)$ is
    \[
        \vol((P+\bt)\cap \bu^\perp) = \int_{(P+\bt)\cap \bu^\perp} 1 \dx = -\frac{t_1 u_1 + t_2 u_2 + 3 u_1 - u_2}{u_1(u_1 - u_2)}.
    \]
    Note that for $\bt = (0,0)$ this specializes to the polynomial in the corresponding chamber in \Cref{fig:integration-radial} from \Cref{ex:integration-radial}, as explained in \Cref{rmk:closures}.
    The bounds from \Cref{prop:deg_rational_pieces} in this case are $6$ for the degree of the numerator and $4$ for the degree of the denominator, both of which are not tight.
\end{example}

\subsection{Rotating the translation}\label{sec:rotating-the-translation}

In this section we extend the theory developed in \Cref{sec:translational_slices}, where we fixed a polytope $P$ and a unit direction $\bu \in S^{d-1}$, and studied the sections of $P$ by parallel affine hyperplanes $H(\beta)$ with normal vector $\bu$. Recall from \Cref{rmk:orderings} that $\bu$ induces an ordering on the vertices of $P$. We obtained a parallel hyperplane arrangement $\chamtra(P)$ where, for each chamber, the hyperplanes $H(\beta)$ intersect $P$ in a fixed set of edges, and separate two vertices which are consecutive in the induced ordering. 
Now, we allow ourselves to first vary the direction $\bu \in S^{d-1}$, and then construct $\chamtra(P)$, depending on $\bu$. 

\begin{lemma}\label{lem:sweep-arrangement}
    Let $P\subset \R^d$ be a full-dimensional polytope and let $R$ be a region of the central hyperplane arrangement
    \[
        \regtra(P) = \{ (\bv_i - \bv_j)^\perp \mid \bv_i, \bv_j \text{ are distinct vertices of $P$}\}.
    \]
    Then the following holds:
    For all $\bu \in R$ the linear functional $\langle \bu, \cdot \rangle$ induces the same ordering $\bv_1,\dots,\bv_n$ of the vertices of $P$.
\end{lemma}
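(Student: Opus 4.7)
The plan is to show that as $\bu$ varies within a single region $R$ of $\regtra(P)$, no two vertices $\bv_i, \bv_j$ can swap positions in the ordering induced by $\langle \bu, \cdot \rangle$. The key observation is that a swap between $\bv_i$ and $\bv_j$ corresponds precisely to crossing the hyperplane $(\bv_i - \bv_j)^\perp$, and $R$ by definition avoids all such hyperplanes.

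First I would observe that for every pair of distinct vertices $\bv_i, \bv_j$ the hyperplane $(\bv_i - \bv_j)^\perp$ is an element of the arrangement $\regtra(P)$. Since $R$ is a connected component of $\R^d \setminus \regtra(P)$, the sign $\sgn \langle \bu, \bv_i - \bv_j \rangle$ is well-defined (nonzero) for every $\bu \in R$. Next, I would argue that this sign is in fact \emph{constant} on $R$: if $\bu, \bu' \in R$ gave opposite signs, then by connectedness there would be a continuous path in $R$ from $\bu$ to $\bu'$, and by the intermediate value theorem applied to the continuous function $\bu \mapsto \langle \bu, \bv_i - \bv_j \rangle$ along this path, some point of the path would satisfy $\langle \bu^*, \bv_i - \bv_j \rangle = 0$, i.e., would lie on $(\bv_i - \bv_j)^\perp \subset \regtra(P)$, contradicting the fact that the path stays in $R$.

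Finally, since the strict inequalities $\langle \bu, \bv_i \rangle < \langle \bu, \bv_j \rangle$ (for all pairs $i \neq j$) hold for every $\bu \in R$ with the same signs, the induced total ordering of the vertices $\bv_1, \dots, \bv_n$ is the same for every $\bu \in R$. This gives the claim. I do not expect any real obstacle; the only subtle point is to note that within a region $R$ no two values $\langle \bu, \bv_i \rangle$, $\langle \bu, \bv_j \rangle$ can coincide (so the ordering is genuinely a total order with no ties), which is exactly the reason $\regtra(P)$ is defined using all pairwise difference hyperplanes.
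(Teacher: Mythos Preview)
Your proof is correct and is exactly the standard argument one would give. Note, however, that the paper does not actually supply a proof of this lemma: it is stated without proof and attributed, via the terminology \emph{sweep arrangement} / \emph{lineup arrangement}, to the literature (Padrol--Philippe). The authors evidently regard it as an immediate consequence of the definition, which is precisely what your argument spells out: on a connected component of the complement of $\regtra(P)$ every pairwise sign $\sgn\langle \bu, \bv_i-\bv_j\rangle$ is nonzero and constant, so the induced strict total order on the vertices does not change. There is nothing to add or correct.
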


The central hyperplane arrangement $\regtra(P)$ is called \emph{sweep arrangement} or 
\emph{lineup arrangement} \cite{PadrolPhilippe2021}. The above lemma implies the following: Let $\bu,\bu' \in R, i \in [n-1]$ 
and let $C(\bu) \subset \chamtra(P)$, $C(\bu') \subset \chamtraprime(P)$ such that all hyperplanes in $C(\bu)$ and $C(\bu')$ separate $\bv_i$ from $\bv_{i+1}$. Then, any two hyperplanes $H(\bu,\beta) \in C(\bu)$, $H(\bu',\beta') \in C(\bu')$ intersect $P$ in the same set of edges. However, the polytopes $P \cap H(\bu,\beta)$ and $H(\bu',\beta')$ are not normally equivalent.

\begin{example}\label{ex:sweep-arrangement}
    We continue \Cref{ex:parralel-chambers,ex:chambers-translational}.
    The sweep arrangement of $P$ is shown in \Cref{fig:sweep-arrangement}. 
    It consists of $6$ distinct hyperplanes, subdividing $\R^2$ into $12$ regions. 
    Each of these regions corresponds to a possible ordering of the vertices of $P$ 
    induced by a linear functional. The shaded region
    \[
        R = \{\bx \in \R^2 \mid 
        \langle \bv_1 - \bv_2, \bx \rangle < 0, \ 
        \langle \bv_5 - \bv_3, \bx \rangle < 0, \
        \langle \bv_2 - \bv_5, \bx \rangle < 0, \
        \langle \bv_3 - \bv_4, \bx \rangle < 0 \}
    \]
    corresponds to the ordering $\bv_1, \bv_2, \bv_5, \bv_3, \bv_4$ from \Cref{ex:parralel-chambers}.
    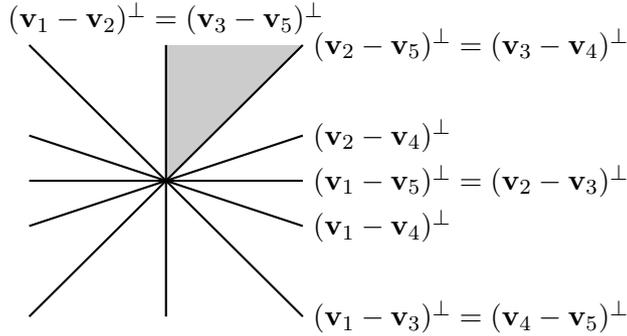
\begin{figure}[ht]
        \centering
        \begin{tikzpicture}[scale = 1.8]
\fill[black!20] (0,0) -- (1,1) -- (0,1);
\draw[thick] (1, 0) -- (-1, 0) ;
\node[anchor=west] at (1,0) {$(\bv_1 - \bv_5)^\perp = (\bv_2 - \bv_3)^\perp$};
\draw[thick] (1, -1/3) -- (-1, 1/3) ;
\node[anchor=west] at (1, -1/3) {$(\bv_1 - \bv_4)^\perp$};
\draw[thick] (0, 1) -- (0, -1) ;
\node[anchor=south] at (0, 1) {$(\bv_1 - \bv_2)^\perp = (\bv_3 - \bv_5)^\perp$};
\draw[thick] (-1, 1) -- (1, -1) ;
\node[anchor=west] at (1, -1) {$(\bv_1 - \bv_3)^\perp = (\bv_4 - \bv_5)^\perp$};
\draw[thick] (-1, -1) -- (1, 1) ;
\node[anchor=west] at (1, 1) {$(\bv_2 - \bv_5)^\perp = (\bv_3 - \bv_4)^\perp$};
\draw[thick] (1, 1/3) -- (-1, -1/3) ;
\node[anchor=west] at (1, 1/3) {$(\bv_2 - \bv_4)^\perp$};
\end{tikzpicture}
        \caption{The sweep arrangement of $P$ from \Cref{ex:sweep-arrangement}. The region $R$ is shaded in gray.}
        \label{fig:sweep-arrangement}
    \end{figure}
\end{example}

Each region $R \subset \regtra(P)$ corresponds to the same ordering of the vertices of $P$, i.e., the parallel arrangement $\chamtra(P)$ has the same combinatorial structure for all $\bu \in R\cap S^{d-1}$. Also in this case we have a finite number of regions, each of which defines a finite number of chambers. The following is the analogue of \Cref{prop:countingchamberscocircuit_intersectionbody} for the count of total parallel slicing chambers, including also lower-dimensional cells of the arrangements. This will provide also the proof of \Cref{thm:intro-combinatorialtypes}.

\begin{prop}\label{prop:countingchamberssweep_monotone}
    Assuming that $P\subset \R^d$ has $n$ vertices, the sweep arrangement $\chamtra(P)$ has at most $\binom{n}{2}$ affine hyperplanes. There are at most $O(n^{2d})$ $d$-dimensional polyhedral regions in $\regtra(P)$ and the total number of regions (including lower-dimensional cells) in the sweep arrangement is bounded by $O(n^{2d} 2^d)$. For each region in $\regtra (P)$ we have an arrangement of parallel hyperplanes \ $\chamtra (P)$, thus the final number of slicing chambers 
    (even of lower dimension) is $O(n^{2d+1} 2^{d})$.
\end{prop}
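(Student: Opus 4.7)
The plan is to mirror the counting argument from \Cref{prop:countingchamberscocircuit_intersectionbody}, applied now to the pair $(\regtra(P), \chamtra(P))$. The first claim is immediate: the sweep arrangement $\regtra(P)$ is, by definition, the collection $\{(\bv_i - \bv_j)^\perp \mid i \neq j\}$, and since there are $\binom{n}{2}$ unordered pairs of distinct vertices, at most $\binom{n}{2}$ distinct central hyperplanes appear.

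Next, I would apply Zaslavsky's theorem (or, equivalently, the standard bound for face counts of hyperplane arrangements, as used in the proof of \Cref{prop:countingchamberscocircuit_intersectionbody}) to an arrangement of at most $m = \binom{n}{2}$ hyperplanes in $\R^d$. This yields
\[
f_d(\regtra(P)) \leq \sum_{i=0}^{d} \binom{m}{i} = O(m^d) = O(n^{2d}),
\]
for the number of top-dimensional regions, the worst case being achieved generically. Then, reusing the bound
\[
f_k(\regtra(P)) \leq \binom{d}{k} f_d(\regtra(P)), \quad 0 \leq k \leq d,
\]
from \cite{KFukudaetal-hyperpcount1991}, summing over $k$ gives $\sum_{k=0}^{d} f_k(\regtra(P)) \leq 2^d \cdot O(n^{2d}) = O(n^{2d} 2^d)$ polyhedral cells in total.

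It remains to count, for each region $R\subset \regtra(P)$, the cells of the associated parallel arrangement $\chamtra(P)$. By \Cref{lemma:hyparr_translation} and \Cref{rmk:orderings}, once $\bu \in R$ fixes an ordering of the vertices, $\chamtra(P)$ is an arrangement of at most $n$ parallel hyperplanes, equivalent to a partition of $\R$ by the values $\langle \bu, \bv_i\rangle$. Hence it has at most $n+1$ top-dimensional chambers and at most $2n+1 = O(n)$ cells including lower-dimensional faces. Multiplying this by the bound $O(n^{2d} 2^d)$ on the number of cells of $\regtra(P)$ gives the final count $O(n^{2d+1} 2^d)$ slicing chambers.

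The argument is essentially a bookkeeping exercise; no serious obstacle is expected. The only subtle point is verifying that the two bounds compose correctly, namely that the parametric description of $\chamtra(P)$ across a fixed region $R$ does not alter its combinatorial size, which is guaranteed by \Cref{lem:sweep-arrangement}. Finally, since each slicing chamber has, by \Cref{lemma:hyparr_translation} together with \Cref{th:cocircuit-arrangement}\ref{item:comb-info}, a single combinatorial type of hyperplane section attached to it, this same count of $O(n^{2d+1} 2^d)$ bounds the number of combinatorial types of affine hyperplane sections of $P$, proving \Cref{thm:intro-combinatorialtypes}.
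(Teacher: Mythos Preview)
Your proposal is correct and follows essentially the same argument as the paper: count hyperplanes by pairs of vertices, apply Zaslavsky's bound and the face-count inequality from \cite{KFukudaetal-hyperpcount1991} to get $O(n^{2d}2^d)$ cells in $\regtra(P)$, then multiply by the $O(n)$ cells of each parallel arrangement. The only addition is your last paragraph deriving \Cref{thm:intro-combinatorialtypes}, which the paper does in a separate short proof immediately afterward; note that the reference there should be to \Cref{lemma:hyparr_translation} (or \Cref{thm:intro_structure}\ref{item:comb-info}) rather than \Cref{th:cocircuit-arrangement}, which concerns the rotational decomposition.
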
    

\begin{proof} Each hyperplane of the sweep arrangement of a polytope $P$ is identified by a pair of vertices: their difference is the normal of the hyperplane. As in \Cref{prop:countingchamberscocircuit_intersectionbody}, by Zaslavsky's theorem (see \cite{Zaslavsky} or \cite[Proposition 2.4]{Stanley:IntroHyperplaneArr}) one can prove that the number of 
top-dimensional polyhedral regions $f_d(\regtra(P))$ of the sweep arrangement 
is 
\[
f_d(\regtra(P)) \leq \sum_{i=1}^d \binom{\binom{n}{2}}{i} \sim O(n^{2d}).
\]
Again, we can bound the number of $k$-faces in the arrangement by the inequality (see \cite{KFukudaetal-hyperpcount1991}):
\[
f_k(\regtra(P)) \leq \binom{d}{k} f_d(\regtra(P)), \quad 0 \leq k \leq d,
\]
which implies the desired bound on all polyhedral regions in the sweep arrangement 
(of all dimensions) of $O(n^{2d}2^d)$. 

For each such region of the sweep arrangement we have a corresponding parallel hyperplane arrangement $\chamtra$.
For each choice of translation direction $\bu$, the number of chambers is of the order of $O(n)$.
Indeed, the chambers of this arrangement are much easier, as they are defined, in the worst case, by $n$ parallel hyperplanes, one for each vertex of $P$; each chamber is bounded 
by exactly two parallel hyperplanes. Thus, the total number of $d$-dimensional chambers is $O(n^{2d}n)$ and the final total count of slicing chambers is bounded by $O(n^{2d+1}2^d)$ when we consider all the lower-dimensional regions and the corresponding chambers.
\end{proof}

\begin{proof}[Proof of \Cref{thm:intro-combinatorialtypes}]
    Given a polytope $P$, both \Cref{prop:countingchamberscocircuit_intersectionbody} and \Cref{prop:countingchamberssweep_monotone} provide an upper bound for the number of combinatorial types of hyperplane sections of $P$. Among the two bounds, the smallest is $O(n^{2d+1}2^d)$, and the claim follows.
\end{proof}

Recall our notation $H(\bu,\beta) = \{\bx \in \R^d \mid \langle \bu, \bx \rangle = \beta \}$.
Once more, we can extend \Cref{thm:parallel_sections} by first choosing the direction $\bu$, and then considering all parallel sections with normal vector $\bu$. This is a second way to parametrize all  affine hyperplane sections, and the following result is the analogue of \Cref{thm:piecewise_rat_translate_rotation}.

\begin{theorem}\label{thm:piecewise_rat_rotate_translation}
Let $P\subset \R^d$ be a full-dimensional polytope and let $f(\bx) = \sum_{\balpha} c_{\balpha} \bx^{\balpha}$ be a polynomial. Let $R \subset \R^d$ be a region of the sweep arrangement $\regtra(P)$ and let $C(\bu)\subset \R$ be a chamber of the parallel arrangement $\chamtra(P)$, for $\bu\in R$. Restricted to $\bu \in R \cap S^{d-1}$ and $\beta \in C(\bu)$, the integral $\int_{P \cap H(\bu,\beta)} f(\bx) \dx$ is a rational function in variables $u_1,\dots,u_d,\beta$.
\end{theorem}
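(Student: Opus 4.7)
The plan is to mirror the proof of \Cref{thm:piecewise_rat_translate_rotation} almost line by line, with the roles of translation and rotation swapped: here the direction $\bu$ plays the role of the variable geometry, and $\beta$ plays the role of the extra one-dimensional parameter. The foundational combinatorial fact we need is already supplied by \Cref{lem:sweep-arrangement}: for all $\bu \in R$ the ordering $\bv_1,\dots,\bv_n$ of the vertices of $P$ induced by $\langle \bu,\cdot\rangle$ is the same, so the parallel arrangement $\chamtra(P)$ has the same combinatorial structure for every $\bu \in R$, and picking $\beta \in C(\bu)$ selects the same consecutive pair $\bv_i,\bv_{i+1}$ of the ordering. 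Consequently the set of edges of $P$ cut by $H(\bu,\beta)$ is constant on the slicing chamber, so a triangulation $\mathcal T$ of $P\cap H(\bu,\beta)$ using only its vertices can be chosen uniformly on $R\cap S^{d-1}\times C(\bu)$.

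Next I would write down the parametric vertices. For each cut edge $\conv(\ba_j,\bb_j)$ of $P$, solving $\langle \bu, \lambda \ba_j + (1-\lambda)\bb_j\rangle = \beta$ for $\lambda$ yields
\[
\bv_j(\bu,\beta) \;=\; \frac{\beta}{\langle \bu,\bb_j-\ba_j\rangle}(\bb_j-\ba_j) \;+\; \frac{\langle \bu,\bb_j\rangle\ba_j - \langle \bu,\ba_j\rangle\bb_j}{\langle \bu,\bb_j-\ba_j\rangle},
\]
which is rational in $u_1,\dots,u_d,\beta$ (indeed affine in $\beta$), with denominator $\langle \bu,\bb_j-\ba_j\rangle$ that is strictly signed and nonzero on the slicing chamber (since the edge is cut by $H(\bu,\beta)$). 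For a simplex $\Delta\in\mathcal T$ with vertices $\bv_{j_1}(\bu,\beta),\dots,\bv_{j_d}(\bu,\beta)$, set
\[
M_\Delta(\bu,\beta) \;=\; \begin{bmatrix}
\bv_{j_2}(\bu,\beta)-\bv_{j_1}(\bu,\beta)\\
\bv_{j_3}(\bu,\beta)-\bv_{j_1}(\bu,\beta)\\
\vdots\\
\bv_{j_d}(\bu,\beta)-\bv_{j_1}(\bu,\beta)\\
\bu
\end{bmatrix},
\]
so that $\vol(\Delta) = \tfrac{1}{(d-1)!}|\det M_\Delta(\bu,\beta)|$. Because the combinatorial type of $\Delta$ is constant and its vertices depend rationally on $(\bu,\beta)$, the sign of $\det M_\Delta$ is constant on the chamber, and $\vol(\Delta)$ itself becomes a single rational function rather than an absolute value.

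The final step is the integration. Apply \Cref{lem:monomial_to_linear_power} to expand $f(\bx)=\sum_{\balpha}c_{\balpha}\bx^{\balpha}$ as a combination of powers of linear forms $\langle \bp,\bx\rangle^{|\balpha|}$, and then apply \Cref{lem:integrals_linear_power_exp} on each $\Delta\in\mathcal T$ with the parametric vertices $\bv_{j_i}(\bu,\beta)$. This reproduces, verbatim up to replacing $\bu$-parametrized vertices of the central case by $(\bu,\beta)$-parametrized vertices here, the computation in \eqref{eq:long-computation}, yielding
\[
\int_{P\cap H(\bu,\beta)} f(\bx)\,\dd\bx \;=\; \sum_{\Delta\in\mathcal T} \det M_\Delta(\bu,\beta)\, \Phi_\Delta\!\left(\langle \bp,\bv_{j_1}(\bu,\beta)\rangle,\dots,\langle \bp,\bv_{j_d}(\bu,\beta)\rangle\right),
\]
where $\Phi_\Delta$ is a polynomial expression in its arguments (gathered from the coefficients arising in \Cref{lem:monomial_to_linear_power,lem:integrals_linear_power_exp}). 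Each such summand is a rational function of $(\bu,\beta)$, and so is their finite sum.

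There is no real obstacle beyond bookkeeping: all the ingredients --- fixed combinatorics on the chamber, rational parametrization of vertices, constant sign of the simplex determinants, and the Lasserre--Baldoni integration formula --- have already been established in the excerpt. The only point requiring a small note is that while the dependence of $\bv_j(\bu,\beta)$ on $\beta$ is linear, its dependence on $\bu$ involves dividing by $\langle \bu,\bb_j-\ba_j\rangle$, which is precisely what makes the final expression rational rather than polynomial in the $u_i$ (in contrast to the fixed-direction \Cref{thm:parallel_sections}, where this factor was a nonzero constant and the integral was polynomial in $\beta$ alone).
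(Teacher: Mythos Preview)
Your proposal is correct and follows essentially the same approach as the paper: invoke \Cref{lem:sweep-arrangement} to fix the combinatorics and the triangulation on the slicing chamber, parametrize the vertices $\bv_j(\bu,\beta)$ by the same formula, form the matrix $M_\Delta(\bu,\beta)$, and repeat the computation of \eqref{eq:long-computation} via \Cref{lem:monomial_to_linear_power,lem:integrals_linear_power_exp}. The paper's proof is terser but structurally identical.
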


\begin{proof}
    By \Cref{lem:sweep-arrangement} the chambers $C(\bu)$ of $\chamtra(P)$ are linearly dependent on $\bu$ when restricting to $\bu \in R\cap S^{d-1}$. Let $\beta \in C(\bu)$. As in the proof of \Cref{thm:piecewise_rat_translate_rotation}, we triangulate $P \cap H(\bu,\beta)$, obtaining the matrix 
    \[
    M_\Delta(\bu,\beta) = \begin{bmatrix}
    \bv_{j_2}(\bu,\beta) - \bv_{j_1}(\bu,\beta) \\
    \bv_{j_2}(\bu,\beta) - \bv_{j_1}(\bu,\beta) \\
    \vdots \\
    \bv_{j_{d}}(\bu,\beta) - \bv_{j_1}(\bu,\beta) \\
    \bu
    \end{bmatrix},\]
    \[
    \bv_{j}(\bu,\beta) = \frac{\beta}{\langle \bu, \bb_i-\ba_i\rangle} (\bb_i - \ba_i) + \frac{\langle \bu, \bb_i \rangle \ba_i - \langle \bu, \ba_i \rangle \bb_i}{\langle \bu, \bb_i - \ba_i\rangle},
    \]
    where $\bv_{j}(\bu,\beta)$ denotes the vertex of a simplex $\Delta$ in the triangulation $\mathcal T$, which lies on the interior of the edge $\conv(\ba_j + \bt, \bb_j + \bt)$ of $P$.
    Repeating the computation of \eqref{eq:long-computation} in the proof of \Cref{thm:integral_polynomial_over_P} yields
    {\small
    \begin{align*}
        \int_{P \cap H(\bu,\beta)} f(\bx) \dx  
        = \sum_{\Delta \in \mathcal{T}} \left| \det M_{\Delta}(\bu,\beta) \right| \sum_{\balpha} & \frac{c_{\balpha}}{(|\balpha|+d-1)!} \sum_{{\substack{ \bp \in \Z^d_{\geq 0} \\ \bp \leq \balpha}}}(-1)^{|\balpha|-|\bp|}
        \binom{\alpha_1}{p_1}\cdots \binom{\alpha_d}{p_d}  \cdot \\
        & \qquad\quad
        \cdot \sum_{ \substack{ \bk \in \Z_{\geq 0}^{d}, \\ | \bk| = |\balpha|}}\langle \ \bp, \ \bv_{j_1}(\bu,\beta) \ \rangle^{k_1}\cdots \langle \ \bp, \ \bv_{j_d}(\bu,\beta) \ \rangle^{k_{d}},
    \end{align*}}
    which is a rational function in $u_1,\dots,u_d,\beta$.
\end{proof}

This result implies that $\int_{P \cap H(\bu,\beta)} f(\bx) \dx$ is a piecewise rational function. The domains over which it is rational are now polyhedra, in contrast to \Cref{sec:translating-the-rotation}, up to restriction to a sphere, and they live in $\R^{d+1}$:
\[
\{ (\bu,\beta)\in \R^{d+1} \,|\, \bu \in R, \beta \in C(\bu)\}\cap \left( S^{d-1}\times \R \right) ,
\]
for some region $R\subset \regtra(P)$ and chamber $C(\bu)\subset \chamtra(P)$, where $\bu \in R\cap S^{d-1}$. In this translational setting, the chamber $C(\bu)$ has shape $\langle \bv_i,\bu \rangle \leq \beta \leq \langle \bv_{i+1},\bu \rangle$, and therefore it defines a polyhedron in $\R^{d+1}$. There is one such polyhedron, and hence one such rational function, for each chamber of every region. The following is the analogue of \Cref{prop:deg_rational_pieces} in the translational setting. 

\begin{prop}\label{prop:deg_pieces_translation}
    In each of the slicing chambers the rational function $\int_{P \cap H(\bu,\beta)} f(\bx) \dx$ 
    respects the following degree bounds:    \begin{equation}\label{eq:int_deg_bound_translation}
        \begin{aligned}
        \deg\,  \operatorname{numerator} \left( \int_{P\cap H(\bu,\beta)} f(\bx) \dx \right) &\leq \Big(f_1(P) - (d-1)\Big) ( D+d-1 ) +1, \\
        \deg\,  \operatorname{denominator} \left( \int_{P\cap H(\bu,\beta)} f(\bx) \dx \right) &\leq \Big(f_1(P) - (d-1)\Big) ( D+d-1 ).
        \end{aligned}
    \end{equation}
    Here $f_i(P)$ denotes the number of $i$-dimensional faces of $P$ and $D = \deg f$.
\end{prop}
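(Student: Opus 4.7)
The proof follows the same strategy as the proof of \Cref{prop:deg_rational_pieces}, with the main difference being the parametrization of the vertices of the section, which is now given by $\bv_j(\bu,\beta)$ instead of $\bv_j(\bt,\bu)$. Fix a region $R \subset \regtra(P)$ and a chamber $C(\bu) \subset \chamtra(P)$ for $\bu \in R$, together with a triangulation $\mathcal T$ of $P \cap H(\bu,\beta)$. By \Cref{thm:piecewise_rat_rotate_translation} the integral is a sum over $\Delta \in \mathcal T$ of $|\det M_\Delta(\bu,\beta)|$ times an expression of the form
\begin{equation*}
\sum_{\balpha} \frac{c_{\balpha}}{(|\balpha|+d-1)!} \sum_{{\substack{ \bp \in \Z^d_{\geq 0} \\ \bp \leq \balpha}}} (-1)^{|\balpha|-|\bp|} \binom{\alpha_1}{p_1}\cdots \binom{\alpha_d}{p_d} \sum_{ \substack{ \bk \in \Z_{\geq 0}^{d}, \\ | \bk| = |\balpha|}} \langle \bp, \bv_{j_1}(\bu,\beta) \rangle^{k_1}\cdots \langle \bp, \bv_{j_d}(\bu,\beta) \rangle^{k_d},
\end{equation*}
which will be analyzed degree-wise exactly as in \Cref{prop:deg_rational_pieces}.

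The first step is to observe that, in contrast with the rotational setting, the parametrization of the vertex
\[
\bv_j(\bu,\beta) = \frac{\beta(\bb_j - \ba_j) + \langle \bu, \bb_j \rangle \ba_j - \langle \bu, \ba_j \rangle \bb_j}{\langle \bu, \bb_j - \ba_j\rangle}
\]
has each coordinate a quotient of a polynomial of degree $1$ by $\langle \bu, \bb_j - \ba_j\rangle$, which is also of degree $1$. Factoring out the denominator of $\bv_{j_i}(\bu,\beta)$ from the inner product with $\bp$, and combining the summands over the common denominator $\prod_i \langle \bu, \bb_{j_i} - \ba_{j_i}\rangle^{D}$ (with $D = \deg f$), the inner triple sum becomes $\varphi_\Delta(\bu,\beta) / \prod_i \langle \bu, \bb_{j_i} - \ba_{j_i}\rangle^{D}$ for a polynomial $\varphi_\Delta$. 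Since each summand has numerator degree equal to denominator degree, the polynomial $\varphi_\Delta$ has degree $dD$.

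Next, one analyzes the determinant. Each entry of rows $1,\dots,d-1$ of $M_\Delta(\bu,\beta)$ is a difference of rational functions with common denominator $\langle \bu, \bb_{j_1} - \ba_{j_1}\rangle\langle \bu, \bb_{j_i} - \ba_{j_i}\rangle$ and numerator of degree $2$. The last row $\bu$ has degree $1$. Clearing denominators in the cofactor expansion, $|\det M_\Delta(\bu,\beta)|$ is a quotient of a polynomial of degree at most $2(d-1)+1 = 2d-1$ and the polynomial $\langle \bu, \bb_{j_1} - \ba_{j_1}\rangle^{d-1} \prod_{i\neq 1} \langle \bu, \bb_{j_i} - \ba_{j_i}\rangle$ of degree $2(d-1)$. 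Multiplying by the inner sum, the contribution of a single simplex $\Delta$ has the form
\[
\frac{\phi_\Delta(\bu,\beta)}{\langle \bu, \bb_{j_1} - \ba_{j_1}\rangle^{D+d-1} \prod_{i \neq 1}\langle \bu, \bb_{j_i} - \ba_{j_i}\rangle^{D+1}},
\]
and the difference between the degrees of numerator and denominator of each such summand equals $1$, in contrast with $d(D+1)$ in the rotational case.

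Finally, summing over $\Delta \in \mathcal T$ and using that the resulting denominators share the same shape, one obtains a single rational function with denominator $\prod_j \langle \bu, \bb_j - \ba_j\rangle^{D+d-1}$, where the product runs over all vertices of $P \cap H(\bu,\beta)$. Its degree is $f_0(P\cap H(\bu,\beta))\cdot (D+d-1)$, and, as already used in the proof of \Cref{prop:deg_rational_pieces}, one can bound $f_0(P\cap H(\bu,\beta)) \leq f_1(P) - (d-1)$ via \cite[Theorem 5.6]{BBMS:IntersectionBodiesPolytopes}. Since the difference between numerator and denominator degrees is preserved under summation of fractions with matching difference, we conclude the bounds in \eqref{eq:int_deg_bound_translation}. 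The main (and only genuinely different) bookkeeping step compared to the rotational case is tracking the degrees of the new vertex parametrization through the determinant and the inner sum; no new geometric ingredient is needed.
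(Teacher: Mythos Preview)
Your proof is correct and follows exactly the same approach as the paper's own proof: both reduce to the computation in \Cref{prop:deg_rational_pieces}, observing that in the translational setting the entries of $M_\Delta(\bu,\beta)$ (apart from the last row) are quotients of two quadratic polynomials, so the determinant has numerator degree $2(d-1)+1$ and denominator degree $2(d-1)$, making the numerator--denominator degree gap equal to $1$ instead of $d(D+1)$. Your version is in fact more detailed than the paper's, which simply points back to \Cref{prop:deg_rational_pieces} and records the new degree count for the determinant.
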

\begin{proof}
    We can repeat exactly the same computations in the proof of \Cref{prop:deg_rational_pieces}. The only difference is that in this framework lies in the rational function $\left| \det M_{\Delta}(\bu,\beta) \right|$. All but the last row of the matrix have entries that are quotients of two quadratic polynomials. The denominator of each entry of the $i$th row is $ \langle \bu, \bb_{j_i}-\ba_{j_i}\rangle \langle \bu, \bb_{j_1}-\ba_{j_1}\rangle$ for all $i=2,\ldots,d$. 
    The determinant is thus the quotient of a polynomial of degree $2(d-1)+1$ and another polynomial of degree $2(d-1)$. Therefore, in the end, the degrees of numerator and denominator of $\int_{P \cap H(\bu,\beta)} f(\bx) \dx$ differ only by $1$.
\end{proof}

\begin{example}\label{ex:sweep-integral}
We continue \Cref{ex:parralel-chambers,ex:chambers-translational,ex:sweep-arrangement}. Let $R$ denote the region of the sweep arrangement of $P$ defined in \Cref{ex:sweep-arrangement}, which is depicted in \Cref{fig:sweep-arrangement} as shaded cone. 
\Cref{fig:sweep-integral} shows the volume of $P \cap H(\bu,\beta)$, i.e., 
the integral of the constant function $1$, parametrically for directions 
$\bu \in R\cap S^{d-1}$ and $\beta \in \chamtra(P)$. Note that, in accordance with \Cref{rmk:closures}, the rational functions that are displayed specialize 
to the ones in \Cref{fig:chambers-translational} when evaluated at $\bu = \tfrac{1}{\sqrt5} (1,2)$.
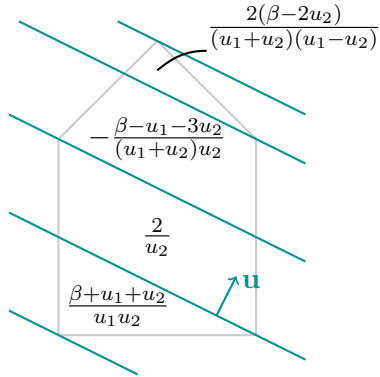
\begin{figure}[ht]
    \centering
    \begin{tikzpicture}[scale = 1.3]
    \draw[thick, color=black!20] (-1,-1) -- (1,-1) -- (1,1) -- (0,2) -- (-1,1) -- (-1,-1)  ;
    \draw[thick, cb-green-sea] (-0.2, -1.4) -- (-1.5, -0.75) ;
    \draw[thick, cb-green-sea] (1.5, -1.25) -- (-1.5, 0.25) ;
    \draw[thick, cb-green-sea] (1.5, -0.25) -- (-1.5, 1.25) ;
    \draw[thick, cb-green-sea] (1.5, 0.75) -- (-1.4, 2.2) ;
    \draw[thick, cb-green-sea] (1.5, 1.25) -- (-0.4, 2.2) ;
    \node at (-0.4,-0.7) {$\tfrac{\beta + u_1 + u_2}{u_1 u_2}$};
    \node at (0,0) {$\tfrac{2}{u_2}$};
    \node at (0,1) {$-\tfrac{\beta - u_1 - 3u_2}{(u_1 + u_2)u_2}$};
    \node[anchor=south west] at (0.4,1.8) {$\tfrac{2(\beta - 2 u_2)}{(u_1 + u_2)(u_1 - u_2)}$};
    \draw [thick,black] (0.5,1.9) to [out=180,in=40] (0,1.7);
    \draw[->,thick, cb-green-sea] (0.6,-0.8) -- (0.8,-0.4);
    \node[cb-green-sea] at (0.95,-0.45) {$\bu$};
\end{tikzpicture}
    \caption{The function $\int_{P\cap H(\bu,\beta)} 1 \dx = \vol(P\cap H(\bu,\beta))$ for $\bu \in R\cap S^{d-1}$, as defined in \Cref{ex:sweep-integral}.}
    \label{fig:sweep-integral}
\end{figure}
\end{example}

To conclude, we note that this section presented two different cell decompositions, inspired by the rotational and the translational approach respectively. Each of these decompositions is given by the choice of a pair of hyperplane arrangements,
namely the cocircuit arrangement and the central arrangement, or the sweep arrangement and the parallel arrangement. The structure was summarized in \Cref{table:arrangements-overview}, presented in the introduction.

\section{Computational complexity of finding optimal slices}\label{sec:complexity}

The aim of this section is to use the results of \Cref{thm:intro_structure}, explained in more details in \Cref{sec:mergingslices}, in order to obtain \Cref{thm:intro_algorithm}. 
We make use of the structure of the pairs of hyperplane arrangements from the previous sections for optimization purposes. The proofs of \Cref{thm:piecewise_rat_translate_rotation,thm:piecewise_rat_rotate_translation} imply the existence of algorithms to find a slice of a polytope where the integral of a polynomial attains the largest value. In particular, we can find the slice with the largest volume. In the same spirit, we can also find the slice of $P$ with maximal (or minimal) combinatorial properties. 

Related optimization problems involving halfspaces and projection can also be solved with minor adjustments of our algorithm. 
Furthermore, we can combine the optimization criteria in \Cref{thm:intro_algorithm} to find, e.g., a slice of maximal volume of a fixed combinatorial type.
All of these achievements are related to both problems in combinatorial optimization and convex geometry. We will prove that 
some of them are in general hard from the point of view of complexity theory, but our algorithms have polynomial complexity in fixed dimension.
In the following sections, we always assume that the input of our algorithms are rational. Therefore, from now on, a polytope $P$ is going to be rational, unless otherwise stated, and a polynomial $f$ is going to have rational coefficients.

\subsection{Polynomial-time complexity in fixed dimension}\label{sec:polytime_complexity}
Probably the most important feature of our hyperplane arrangements  and the associated chamber decomposition is that within each chamber, an affine hyperplane intersects the polytope $P$ in a fixed set of edges. Thus, they capture all the combinatorial types of hyperplane sections of a polytope. This allows us to tackle optimization questions regarding combinatorial aspects of the slices of a polytope, proving the first part of \Cref{thm:intro_algorithm} \labelcref{mainthm:sect_k_faces}.

\begin{prop}\label{prop:polytime_kfaces}
        We have an algorithm that receives as input a polytope $P\subset \R^d$, and outputs the maximizer and the maximum of $f_k(P\cap H)$ over all hyperplanes $H$, where $f_k(K)$ is the number of $k$-dimensional 
        faces of $K$. The algorithm runs in polynomial time for $d$ fixed. 
\end{prop}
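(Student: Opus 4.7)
The plan is to exhaustively enumerate one hyperplane in every slicing chamber produced by the arrangements of \Cref{thm:intro_structure}, evaluate $f_k(P\cap H)$ on each representative, and return the maximum value together with the corresponding hyperplane.

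I would use the sweep/parallel decomposition of \Cref{lemma:hyparr_translation} and \Cref{lem:sweep-arrangement} rather than the cocircuit/central one, since \Cref{prop:countingchamberssweep_monotone} bounds its total cell count by $O(n^{2d+1}2^d)$, which is tighter than the corresponding bound of \Cref{prop:countingchamberscocircuit_intersectionbody}. First, I would construct the sweep arrangement $\regtra(P)$ explicitly from the vertex list of $P$. Second, I would enumerate every cell (of every dimension) of $\regtra(P)$ and extract a rational interior point $\bu$ from each cell; for fixed $d$ this can be done in polynomial time via standard hyperplane arrangement algorithms, for instance by enumerating the sign vectors realized by the associated oriented matroid and then using linear programming to produce a rational interior point of each cell. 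Third, for each such $\bu$ I would form the parallel arrangement $\chamtra(P)$---that is, the ordered sequence of values $\langle \bu, \bv_i\rangle$ for $\bv_i$ a vertex of $P$---and pick a rational representative $\beta$ in every one of its cells (the $O(n)$ open intervals as well as the endpoints), yielding the affine hyperplane $H(\bu,\beta)$. Fourth, for each such $H$ I would compute $P\cap H$ as an explicit $\mathcal H$-polytope, read off its face lattice, and evaluate $f_k$; in fixed dimension these polytope operations are polynomial in $n$ and the bit-length of the input.

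Correctness is immediate from part \ref{item:comb-info} of \Cref{thm:intro_structure}: the combinatorial type of $P\cap H$, and in particular $f_k(P\cap H)$, is constant on each slicing chamber, so the optimum over all hyperplanes is attained at one of our finitely many representatives. The total number of pairs (region of $\regtra(P)$, chamber of $\chamtra(P)$) is $O(n^{2d+1}2^d)$ by \Cref{prop:countingchamberssweep_monotone}; combined with the polynomial cost of constructing $P\cap H$ and its face lattice for each representative in fixed dimension, this gives an algorithm that runs in polynomial time in the input size of $P$.

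The main obstacle is the careful handling of degenerate hyperplanes. The maximum of $f_k$ over all affine hyperplanes can well be attained only on a lower-dimensional cell of one of the arrangements---typical examples are hyperplanes passing through several vertices of $P$, which can realize combinatorial types with more $k$-faces than any generic section---so one cannot restrict attention to top-dimensional chambers. Producing rational interior points of these lower-dimensional cells, and computing the face lattice of $P\cap H$ correctly when $H$ passes through vertices of $P$, requires standard but nontrivial bookkeeping inherited from the hyperplane arrangement enumeration and the polytope-intersection routines used as subroutines; however, all of this can be arranged within the same polynomial bound.
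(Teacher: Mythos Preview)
Your proposal is correct and follows essentially the same approach as the paper: enumerate the slicing chambers of the sweep/parallel decomposition, note that $f_k(P\cap H)$ is constant on each by \Cref{thm:intro_structure}\ref{item:comb-info}, and invoke the polynomial bound of \Cref{prop:countingchamberssweep_monotone}. Your write-up is in fact more detailed than the paper's---you spell out the enumeration of rational representatives and, importantly, the need to include lower-dimensional cells where the optimum may be attained---whereas the paper's proof is a terse paragraph relying on the surrounding machinery.
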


\begin{proof}
    The proof is elementary and it is follows directly from our construction of the hyperplane arrangements. We will prove it using the sweep arrangement and parallel slices, but an analogous proof can be obtained in the rotational setting. Fix a region $R\subset\regtra(P)$ and a chamber $C(\bu) \subset \chamtra(P)$ for some $\bu\in R\cap S^{d-1}$. Then, for every $\bu$ in the relative interior of $R\cap S^{d-1}$ and for every $\beta$ in the interior of $C(\bu)$, the hyperplane section $P \cap H(\bu,\beta)$ has the same combinatorial type. By \Cref{prop:countingchamberssweep_monotone}, we have polynomially many cases to check for fixed dimension $d$.
\end{proof}
A straightforward generalization of the proof of \Cref{prop:polytime_kfaces} provides a proof of the weighted version of the optimization problem stated in \Cref{thm:intro_algorithm} \ref{mainthm:weighted_k_faces}. Indeed, it is enough to do the weighted count in each chamber of each region, and compare them. The second part of \Cref{thm:intro_algorithm} \labelcref{mainthm:sect_k_faces} also follows from \Cref{prop:polytime_kfaces}, since the hyperplanes in a given slicing chamber separate the same sets of $k$-dimensional faces of $P$.

Based on our hyperplane arrangement and our previous results, we can also solve some metric optimization problems.
For each chamber of each region, the function describing the integral of a polynomial (or, in particular, the volume) over a section of $P$ is piecewise rational by \Cref{thm:piecewise_rat_translate_rotation,thm:piecewise_rat_rotate_translation}. More precisely, let $R$ be a region of $\mathcal{R}(P)$ for either $\circlearrowleft$ or $\text{\footnotesize\rotatebox{-15}{$\uparrow$}}$. Let $C$ be a slicing chamber of $\mathcal{C}$ for points in the given region $R$. Then, there exist two polynomials $p_C,q_C$ such that 
$\int_{P \cap H} f(\bx) \dx =\tfrac{p_C}{q_C}$ for a given polynomial $f$ and hyperplanes $H$ from the slicing chamber.
We can ask for the hyperplane section which maximizes $\int_{P \cap H} f(\bx) \dx$ by maximizing the single rational functions in their respective chambers, and then taking the maximum over this finite list of values.
From the point of view of complexity, the two types of hyperplane arrangements behave in similar but different ways:

\begin{minipage}{.475\textwidth}
    \begin{align}\label{eq:max_problems_radial}
    \begin{split}
        \max\; & \frac{p_C(\bt,\bu)}{q_C(\bt,\bu)} \\
        \text{s.t. } & \bt \in R \subset \regrad(P), \\
         & \bu \in C(\bt) \subset \chamrad(P+\bt), \\
         & \bu \in S^{d-1},
    \end{split}
    \end{align}
\end{minipage}
\begin{minipage}{.475\textwidth}
    \begin{align}\label{eq:max_problems_translational}
    \begin{split}
        \max\; & \frac{p_C(\bu,\beta)}{q_C(\bu)} \\
        \text{s.t. } & \bu \in R \subset \regtra(P), \\
         & \bu \in S^{d-1}, \\
        & \beta \in C(\bu) \subset \chamtra(P).
    \end{split}
    \end{align}
\end{minipage}
~\\

Out of any of these optimization problems, we get (at least) one solution for every chamber of every region. Comparing all the results leads to a maximum over all hyperplane sections. 

\begin{example}
    Continuing the showcase of the pentagon, we give an example for the maximization of the integral of the constant polynomial $f = 1$, both for the radial and the translational case within a fixed region. In other words, we seek to find the slice of maximum volume in this example.
    We begin with the radial case.
    The pentagonal region $R_\circlearrowleft \subset \regrad(P)$ from \Cref{ex:regions-radial} is
    \begin{align*}
    R_\circlearrowleft = \{\bt \in \R^2 \mid 
    & -3t_1 + t_2 \geq -2, \ -t_1 -t_2 \geq 0, \  3t_1 + t_2 \geq -2 
     t_1 - t_2 \geq 0, \  t_2 \geq -1 \},
    \end{align*}
    and is shown in \Cref{fig:regions-radial-cocircuit}
    (recall from \Cref{rmk:closures} that we are allowed to take closures of regions and chambers).
    We have seen in \Cref{ex:radial-integral-parametric} that, when restricting to 
    $\bt \in R_\circlearrowleft$, one of the slicing chambers in the central arrangement is
    \begin{align*}
    C_\circlearrowleft(\bt) 
    & = \{ \bx \in \R^2 \mid (t_1 - 1)x_1 + (t_2 - 1)x_2 \geq 0, \ t_1x_1 + (t_2 + 2)x_2 \geq 0  \},
    \end{align*}
    and that, restricted to this region and chamber, the volume of $P \cap H(\bt,\bu)$ is given by
    \[
      \frac{p(\bt,\bu)}{q(\bt,\bu)} =   \frac{-(t_1 u_1 + t_2 u_2 + 3 u_1 - u_2)}{(u_1 - u_2) u_1},
    \]
    where $\bu\in C_\circlearrowleft(\bt) \cap S^{d-1}$.
    Maximizing this function subject to $\bt \in R, \bu \in C_\circlearrowleft(\bt)\cap S^{d-1}$ yields $\sqrt{10}$, and a maximizer is given by
    $\bt = (\tfrac{5}{12}, -\tfrac{3}{4}) \in R$ and $\bu = \frac{1}{\sqrt{10}}(-3,1) \in C_\circlearrowleft(\bt)\cap S^{d-1}$. Indeed, for this choice of parameters, $(P + \bt) \cap \bu^\perp$ yields the line segment $\conv((-\frac{7}{12},-\frac{7}{4}),(\frac{5}{12},\frac{5}{4}))$ and the Euclidean volume of this line segment inside its affine span is $\sqrt{10}$.
    
    For the translational case, we have seen in \Cref{ex:sweep-arrangement} that 
    one of the regions in the sweep arrangement $\regtra(P)$ is  
    \begin{align*}
    R_\trarrow = \{\bx \in \R^2 \mid 
         x_1 \geq 0, x_1 - x_2 \leq 0
    \}.
    \end{align*}
    By \Cref{ex:sweep-integral}, when restricting to the chamber
   $C_\trarrow(\bu) = \{\beta \in \R \mid 
         \langle \bu, \bv_5 \rangle \leq \beta \leq \langle \bu, \bv_3 \rangle\}$, for $\bu \in R_\trarrow\cap S^{d-1}$,
    the volume is given by 
    \[
        \frac{p_C(\bu,\beta)}{q_C(\bu)} = \frac{-(\beta - u_1 - 3u_2)}{(u_1 + u_2)u_2}.
    \]
    The maximum of this function subject to $\bu \in R_\trarrow \cap S^{d-1}$, $\beta \in C_\trarrow(\bu)$ is $2\sqrt{2}$ and obtained at $\bu = \tfrac{1}{\sqrt{2}}(1,1) \in R_\trarrow\cap S^{d-1}$ and $\beta = \langle \bu, \bv_5 \rangle = 0$, which is smaller than the maximum we have found in the rotational case. Indeed, one can check that the maximum value $\sqrt{10}$ is (among other regions) attained in the region 
    \[
    R'_\trarrow = \{\bx \in \R^2 \mid 
         x_1 - 3x_2 \leq 0, x_1 - x_2 \geq 0
    \},
    \]
    which contains the unit vector $\bu = \tfrac{1}{\sqrt{10}}(3,-1) \in R'_\trarrow$.
\end{example}

How do the two optimization problems \eqref{eq:max_problems_radial} 
and \eqref{eq:max_problems_translational} compare?
Both problems must be solved for a finite (in fact polynomial) number 
of regions, each with a finite number of chambers. For the rotational version, 
namely \eqref{eq:max_problems_radial}, \Cref{prop:countingchamberscocircuit_intersectionbody} gives an upper bound on the total number of slicing chambers, whereas \Cref{prop:countingchamberssweep_monotone} bounds the total number for the translational version \eqref{eq:max_problems_translational}. 
Both of these bounds are polynomial in the number of vertices of $P$ for fixed dimension $d$. 
However, these two problems are very different from a complexity point of view. The complexity analysis for the maximization problem is based on \cite[Algorithm 14.9]{BPR:algoinRAG}, which we summarize in the following lemma.

\begin{lemma}[{\cite[Algorithm 14.9]{BPR:algoinRAG}}]\label{lemma:algo_Basu}
    Let $D$ be an ordered domain contained in a real closed field (such as $D = \Z$ or $D = \Q$). Let $\mathcal{P}\subset D[x_1,\ldots,x_k]$ be a finite set and let $S$ be a semialgebraic set defined by polynomial inequalities involving only the polynomials in $\mathcal{P}$. Consider $F\in D[x_1,\ldots,x_k]$. Denote by $s$ 
    the number of elements of $\mathcal{P}$, by $\delta$ an upper bound on the degree of the elements of $\mathcal{P}$ and $F$. Then, the complexity of computing an 
    infimum of $F$ over $S$, and a minimizer in case such point exists, is $s^{2k+1} \delta^{O(k)}$.
\end{lemma}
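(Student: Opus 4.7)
The plan is to follow the critical point method of real algebraic geometry in the style of Basu--Pollack--Roy. The approach proceeds in three stages: (i) reduce the optimization problem to the computation of a finite set of candidate values of $F$, one per semialgebraically connected component of each realizable sign condition on $\mathcal{P}$; (ii) use sample-point algorithms to produce these candidates; (iii) compare the resulting real algebraic numbers and select the infimum.

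In more detail, I would first enrich $\mathcal{P}$ by introducing an auxiliary variable $y$ and the equation $F = y$, working in $k+1$ variables. The infimum of $F$ on $S$ then equals the infimum of $y$ over the corresponding semialgebraic subset, and a minimizer (if one exists) projects down to $\R^k$. To handle unboundedness and possible non-attainment in a uniform way, I would pass to the real closed field $D\langle \varepsilon \rangle$ with an infinitesimal $\varepsilon$ and consider the compactified problem on $S \cap \{\|\bx\|^2 \le 1/\varepsilon^2\}$; this trick, standard in BPR's framework, reduces everything to a bounded situation while ensuring that the limit value and the information of whether the infimum is attained can be recovered by specializing $\varepsilon \to 0$.

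Next, I would invoke the BPR sample-point algorithm, which in time $s^{k+1}\delta^{O(k)}$ produces a set of size $s^{k}\delta^{O(k)}$ of representatives meeting every connected component of every realizable sign condition on $\mathcal{P}$. Each sample point is encoded as a Thom-encoded real algebraic number defined by univariate polynomials of degree at most $\delta^{O(k)}$. Evaluating $F$ at each such point and performing pairwise comparisons via subresultant or Sturm--Habicht sign-determination algorithms contributes an additional factor of $s^k$, giving the overall bound $s^{2k+1}\delta^{O(k)}$. One then outputs the smallest value, together with the sample point at which it is achieved (or a report that the infimum is only approached in the limit).

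The main obstacle is the careful bookkeeping of the infinitesimal deformation when reporting both the infimum value and an actual minimizer. One has to distinguish whether the infimum is attained in $D$ or only in $D\langle \varepsilon \rangle$, and in the latter case correctly declare that the infimum is an unattained limit. This is handled via real Puiseux series and the specialization $\varepsilon \to 0$ as described in Chapter 14 of \cite{BPR:algoinRAG}; all remaining steps are direct applications of the sample-point and sign-determination machinery.
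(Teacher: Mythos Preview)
The paper does not prove this lemma; it is simply quoted from \cite[Algorithm 14.9]{BPR:algoinRAG} and used as a black box in the subsequent complexity analysis. So there is no ``paper's own proof'' to compare against.

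Your sketch is a reasonable outline of how Basu--Pollack--Roy establish the result, and the ingredients you name (auxiliary variable $y$ with $F=y$, infinitesimal compactification, sample points for realizable sign conditions, Thom encodings, sign-determination for comparisons) are indeed the ones used in Chapter~14 of \cite{BPR:algoinRAG}. One point deserves more care: producing a sample point in every connected component of every realizable sign condition on $\mathcal{P}$ alone is not enough to locate the infimum of $F$ on $S$, since the infimum need not be attained and, even when it is, a minimizer need not lie in the finite sample set for $\mathcal{P}$. The actual argument requires adding $F-y$ (and, in BPR's treatment, further perturbed critical-point polynomials) to the system so that the boundary of the image of $S$ under $F$ is captured by the sample points in the enlarged family; your sketch gestures at this with the auxiliary variable but then reverts to sampling only the original $\mathcal{P}$. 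If you want a self-contained proof rather than a citation, you should make explicit which polynomials are added before sampling and why the resulting sample set is guaranteed to witness the infimum value.
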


We now apply \Cref{lemma:algo_Basu} to our concrete situation of an optimization problem coming from our slicing chambers. Recall that we assume the polytope $P$ and the polynomial $f$ to be defined over $\Q$.

\begin{prop} 
\label{prop:one cellopt}
    Let $P$ be a polytope in fixed dimension $d$. For each region $R$ and each chamber $C$ in both the rotational or the translational setting, there is a polynomial time algorithm that computes the maximum and the maximizer of the rational function $\int_{P\cap H} f(\bx)\dx$. 
\end{prop}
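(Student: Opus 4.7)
The plan is to reduce the problem to a single application of \Cref{lemma:algo_Basu}. Fix a region $R$ and a chamber $C$ in either setting, and gather the decision variables into $\mathbf{z}\in\R^k$, where $k=2d$ in the rotational case ($\mathbf{z}=(\bt,\bu)$) and $k=d+1$ in the translational case ($\mathbf{z}=(\bu,\beta)$). By \Cref{thm:piecewise_rat_translate_rotation} and \Cref{thm:piecewise_rat_rotate_translation} the objective on the chamber has the form $p(\mathbf{z})/q(\mathbf{z})$ with $p,q\in\Q[\mathbf{z}]$; the feasible semialgebraic set $S\subset\R^k$ is carved out by (a) the linear inequalities defining $R$, (b) the inequalities defining $C$ (linear in $\bu$ in the translational case, and of degree at most two in $(\bt,\bu)$ in the rotational case, because of the bilinear form $\langle \bv_i+\bt,\bu\rangle$), and (c) the quadratic equation $\|\bu\|^2=1$.

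My first step is to convert the rational maximization into a polynomial one. The denominator $q(\mathbf{z})$ factors as a product of the linear forms $\langle \bu,\bb_j-\ba_j\rangle$ that appear in the parametrization of the vertices $\bv_j(\bt,\bu)$ or $\bv_j(\bu,\beta)$; because the slicing chamber is constructed precisely so that these forms have constant sign (the same sign control that makes $\sgn(\det M_\Delta)$ constant in the proofs of \Cref{thm:integral_polynomial_over_P,thm:parallel_sections}), we may fix $\varepsilon=\sgn(q)\in\{\pm 1\}$ on $S$. Introducing a slack variable $y$, the maximization becomes
\begin{equation*}
\sup_{\mathbf{z}\in S}\frac{p(\mathbf{z})}{q(\mathbf{z})} \;=\; \sup\bigl\{\,y\in\R \;\bigm|\; \exists\,\mathbf{z}\in S,\ \varepsilon\bigl(p(\mathbf{z})-y\,q(\mathbf{z})\bigr)\geq 0\,\bigr\},
\end{equation*}
which is the extremization of the linear objective $F(\mathbf{z},y)=y$ over the semialgebraic set $S'\subset\R^{k+1}$ obtained from $S$ by appending the inequality $\varepsilon(p-yq)\geq 0$.

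Next I would apply \Cref{lemma:algo_Basu} to $(F,S')$. The ambient dimension is $k+1\leq 2d+1$, which is fixed. The number of defining polynomials $s$ is $O(n)$ (facets of $R$ and $C$, the sphere equation, and the single new inequality). The degree bound $\delta$ is $O(1)$ for the region/chamber inequalities and the sphere, while \Cref{prop:deg_rational_pieces,prop:deg_pieces_translation} bound $\deg p$ and $\deg q$ polynomially in $f_1(P)$ and $D=\deg f$. With the ambient dimension fixed, the resulting complexity $s^{2(k+1)+1}\delta^{O(k+1)}$ is polynomial in the bit-length of the input, and the algorithm returns both the optimum and a maximizer.

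The main obstacle I anticipate is the supremum-versus-maximum subtlety: our regions and chambers are by convention open (cf.\ the definitions in \Cref{sec:rotational_slices,sec:translational_slices}), whereas \Cref{lemma:algo_Basu} returns an extremizer only when it exists. This is resolved by \Cref{rmk:closures}: the rational function $p/q$ has no poles on the boundary strata of the slicing structure, and on each lower-dimensional face it specializes to a rational function of the same family, so we may replace $S$ by its closure without altering the supremum. The optimizer produced by the algorithm then lies on some (possibly lower-dimensional) slicing cell, which can be identified in polynomial time with a final pass through the region and chamber inequalities.
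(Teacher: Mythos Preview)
Your proposal is correct and follows essentially the same route as the paper: introduce an auxiliary variable to linearize the rational objective, then invoke \Cref{lemma:algo_Basu} on the resulting semialgebraic set, with the degree bounds from \Cref{prop:deg_rational_pieces,prop:deg_pieces_translation} and the polyhedral description of $R$ and $C$ controlling $s$ and $\delta$. One small correction: the number of defining polynomials is not $O(n)$ but $O(n^2)$ in the translational case (the sweep arrangement has up to $\binom{n}{2}$ hyperplanes bounding $R$) and $O(n^d)$ in the rotational case (the cocircuit arrangement has up to $\binom{n}{d}$); this does not affect your polynomial-time conclusion since $d$ is fixed.
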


\begin{proof}
    We prove the result in the translational setting. A similar estimate can be obtained in the rotational setting, as explained below in \Cref{rmk:complexity_rotational}.
    Fix a region $R \subset \regtra(P)$ and a slicing chamber $C \subset \chamtra(P)$ for $\bu \in R$. Then $R$ and $C$ are respectively of the form
    \[
    A \bu \geq \mathbf{0}, \qquad \qquad
    \ell_1(\bu) \leq \beta \leq \ell_2(\bu),
    \]
    where $A$ is a matrix and $\ell_i (\bu) = \langle \bv_i , \bu \rangle$ for some vertices $\bv_1, \bv_2$ of $P$. 
    Restricted to hyperplanes $H(\bu,\beta), \bu \in R, \beta \in C$, the integral is a rational function $\int_{P \cap H(\bu,\beta)} f(\bx) \dx = \frac{p_C(\bu,\beta)}{q_C(\bu)}$. 
    We introduce an auxiliary variable $z\in \R$ and define the following set:
    \[
    S_C = \{ (\bu,\beta,z) \in \R^{d+2} \,|\, q_C(\bu)\, z - p_C(\bu,\beta) = 0, \, A \bu \geq \mathbf{0},\, \sum_{i=1}^d u_i^2 = 1,\,  \ell_1(\bu) \leq \beta \leq \ell_2(\bu)\}.
    \]
    Finding the maximum of \eqref{eq:max_problems_radial} is equivalent to 
    finding the maximum of $z$ subject to $(\bu,\beta,z)\in S_C$. We can find such an 
    optimal value by using \cite[Algorithm 14.9]{BPR:algoinRAG}.
    Let $\delta$ be the 
    degree of $q_C(\bu,\beta)\, z - p_C(\bu,\beta)$, which by \eqref{eq:int_deg_bound_translation} is bounded by $\big(f_1(P) - (d-1)\big) (D+d-1)+1$, where $f_1(P)$ denotes the number of edges of $P$. 
    Bounded the number of rows of $A$ by the number of hyperplanes in $\regtra(P)$ yields that 
    the number of polynomials defining $S_C$ is at most $s = 1+\binom{n}{2}+1+2$. 
    Thus, according to \Cref{lemma:algo_Basu}, the complexity of finding the optimal value of our problem is at most
    \[
    s^{2d+5}\delta^{O(d)} = \left(\binom{n}{2} +4\right)^{2d+5}\Big(\big(f_1(P) - (d-1)\big) (D+d-1)+1\Big)^{O(d)},
    \]
    where $n = f_0(P)$ is the number of vertices $P$.
    For fixed $d$, the running time is polynomial.
\end{proof}

\begin{remark}\label{rmk:complexity_rotational}
    Notice that we can solve the optimization problem \eqref{eq:max_problems_translational} in complete analogy by applying \cite[Algorithm 14.9]{BPR:algoinRAG}. 
    The comparison of the complexity of the two problems reduces to the comparison of the total number of chambers and the number of hyperplanes needed to define each chamber. Since these two numbers depend in any case polynomially only on the number of vertices of $P$ (when the dimension $d$ is fixed), the complexity of our algorithm is polynomial in both variants. As noted in \Cref{sec:rotating-the-translation,sec:translating-the-rotation}, the domains over which the objective function is rational are polytopes in the translational case and semialgebraic sets defined by quadrics in the rotational case. Overall, the translational setting is cheaper.
\end{remark}

By \Cref{prop:countingchamberscocircuit_intersectionbody,prop:countingchamberssweep_monotone} we have to apply \Cref{lemma:algo_Basu} only polynomially many times. Putting all of this together, we can prove our main result on optimizing integrals and volumes over the slices of a polytope. This was stated earlier as \Cref{thm:intro_algorithm} \labelcref{mainthm:section_vol_int}. We summarize the algorithm analysis in the rotational and translational settings in \Cref{tab:summary_complexity}.

\begin{theorem}\label{notmainthm:section_vol_int}
    We have an algorithm that receives as input a rational convex polytope $P\subset \R^d$ and a polynomial $f$ of degree $D$ with rational coefficients, and outputs the maximizer and the maximum of $\int_{P\cap H} f(\bx)\dx$ over all affine hyperplane sections. The algorithm runs in polynomial time for fixed $d$. 
\end{theorem}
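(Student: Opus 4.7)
The plan is to assemble the pieces already proved in the paper, in particular the slicing chamber decomposition of Theorem 1.1, the parametric rational formulas of Theorems 3.5 and 3.11, and the single-chamber optimization of Proposition 4.4. Concretely, I would work in the translational setting because, as noted in Remark 4.5, the domains are polyhedral (up to intersecting with the unit sphere), which keeps the degree of the defining inequalities low and hence the complexity bounds cleaner. The argument splits into three tasks: (i) enumerate the chambers in polynomial time, (ii) produce the rational function $\frac{p_C(\bu,\beta)}{q_C(\bu)}$ that represents $\int_{P\cap H(\bu,\beta)} f(\bx)\,d\bx$ on each chamber, and (iii) maximize each such rational function on its closed chamber and take the best value over all chambers.

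For step (i), I would build the sweep arrangement $\regtra(P)$ by listing the $\binom{n}{2}$ hyperplanes $(\bv_i-\bv_j)^\perp$, then enumerate all maximal regions using standard hyperplane arrangement traversal (e.g.\ reverse search). By Proposition 3.9 the number of regions, and hence of slicing chambers, is $O(n^{2d+1}2^d)$, which is polynomial for fixed $d$. For each region $R$, I additionally record the induced ordering $\bv_{\sigma(1)},\dots,\bv_{\sigma(n)}$ of the vertices from Remark 2.10, which determines all chambers of the parallel arrangement $\chamtra(P)$ for $\bu\in R$.

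For step (ii), within a fixed pair $(R,C)$, I would follow the constructive proof of Theorem 3.11: fix a triangulation $\mathcal T$ of a generic section (which is combinatorially invariant across the chamber), parametrize the vertices $\bv_j(\bu,\beta)$ by the explicit formula from that proof, and assemble the closed form rational function via the decomposition of Lemma 2.4 into powers of linear forms and Lemma 2.3 for integration over a simplex. This produces $p_C,q_C\in \Q[\bu,\beta]$ whose bit-size is polynomially bounded by Proposition 3.12, since $\deg p_C$ and $\deg q_C$ are both bounded by $(f_1(P)-(d-1))(D+d-1)+O(1)$.

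For step (iii), I would apply Proposition 4.4 to each chamber. Linearize the rational optimum by introducing an auxiliary variable $z$ and the equation $q_C(\bu)\,z - p_C(\bu,\beta)=0$, then optimize $z$ over the semialgebraic set
\[
S_C=\{(\bu,\beta,z)\in\R^{d+2}\mid A\bu\geq \mathbf 0,\ \textstyle\sum_i u_i^2=1,\ \ell_1(\bu)\leq\beta\leq\ell_2(\bu),\ q_C(\bu)z=p_C(\bu,\beta)\},
\]
which by Lemma 4.3 (a consequence of \cite[Algorithm 14.9]{BPR:algoinRAG}) takes time $s^{2(d+2)+1}\delta^{O(d)}$ with $s=O(n^2)$ and $\delta$ as above. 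Multiplying this single-chamber cost by the polynomial number of chambers from Proposition 3.9 gives an overall polynomial bound in $n$ for fixed $d$. Comparing the finitely many chamberwise maxima and returning the largest yields the global maximum and a maximizer $(\bu^\ast,\beta^\ast)$, hence the optimal hyperplane $H(\bu^\ast,\beta^\ast)$.

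The main obstacle I expect is bookkeeping of the rational function representation: one has to ensure that $q_C(\bu)$ does not vanish in the interior of $R\cap S^{d-1}$ (which follows from Remark 2.7, extending the formula to closures without introducing poles) and that the bit-size of $p_C,q_C$ remains polynomial so that the complexity estimate in Lemma 4.3 is meaningful. Once this is in place, the result falls out by combining the chamber count with the single-chamber estimate of Proposition 4.4.
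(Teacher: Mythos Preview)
Your proposal is correct and follows essentially the same route as the paper: enumerate the slicing chambers via the sweep/parallel arrangement (Proposition~3.9), build the rational function on each chamber from the constructive proof of Theorem~3.11, optimize per chamber using Proposition~4.4 (Lemma~4.3), and take the best value. The only minor difference is emphasis---the paper explicitly invokes the triangulation-size bound from \cite{TriangBook2010} to argue that each rational function can be encoded in polynomial size, whereas you defer this to the degree bounds of Proposition~3.12 and a bit-size remark; both suffice.
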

\begin{proof} 
In \Cref{sec:rotating-the-translation,sec:translating-the-rotation} we have presented two different decompositions of the space of all possible affine hyperplane sections of $P$.
To compute the section which maximizes $\int_{P\cap H} f(\bx)\dx$ we need to solve problem \eqref{eq:max_problems_radial} or \eqref{eq:max_problems_translational} for each region and chamber respectively, yielding a total number of $(\# \text{regions}) (\# \text{chambers})$ many optimization problems. 
To set up each of the optimization problems we detect the edges of $P$ which are intersected by the hyperplanes, yielding  the necessary combinatorial information of the slice $P \cap H$. This allows us to compute a triangulation of $P\cap H$ and to write the objective function as in the proofs of \Cref{thm:piecewise_rat_translate_rotation,thm:piecewise_rat_rotate_translation}. By construction, the triangulation will have number of simplices bounded polynomially in terms of the number of vertices of $P \cap H$. More precisely, for a $k$-dimensional polytope with $m$ vertices, the largest number of top-dimensional simplices in the triangulation is bounded by $O(m^{\lceil (k+1)/2 \rceil})$ \cite[Proposition 2.6.5]{TriangBook2010}. 

In our situation, $k+1=d$, which is a constant, and $m$, the number of vertices  of the slice $P \cap H$, is polynomially bounded by the number of edges and vertices of $P$. This is a polynomial in the number of variables, as well as the number of facet inequalities of $P$, and can be computed in polynomial time in fixed dimension. Thus, the mathematical programs can be encoded in time polynomial in the input size of $P$. Finally, \Cref{prop:one cellopt} and \Cref{rmk:complexity_rotational} imply that both Problem \eqref{eq:max_problems_radial} and Problem \eqref{eq:max_problems_translational} are solvable in polynomial time. We repeat this for each slicing chamber, which means we do this polynomially many times by \Cref{prop:countingchamberscocircuit_intersectionbody,prop:countingchamberssweep_monotone}. Putting all together the total complexity of computation is polynomial in the input.
\end{proof}
\begin{table}[!ht]
    \centering
    {\def\arraystretch{1.5}
    \begin{tabular}{c|c|c|c|c|c}
         & \multirow{2}{*}{\textbf{regions} }
         & \multirow{2}{*}{$\substack{\text{\normalsize \textbf{upper bound}} \vspace*{0.2em} \\ \text{\normalsize \textbf{num. of regions}}}$}  
         & \multirow{2}{*}{\textbf{chambers}} 
         & \multirow{2}{*}{$\substack{\text{\normalsize \textbf{upper bound}} \vspace*{0.1em} \\ \text{\normalsize \textbf{num. of chambers}} \vspace*{0.3em} \\  \text{\normalsize \textbf{per region}} } $}
         & \multirow{2}{*}{$\substack{\text{\normalsize \textbf{cost of}} \vspace*{0.2em} \\ \text{\normalsize \textbf{optimizing in}} \vspace*{0.2em} \\ \text{\normalsize \textbf{one chamber}}}$}  \\
         &  &  &  &  &   \\
        \hline
        \multirow{2}{*}{$\circlearrowleft$} &  
        \multirow{2}{*}{$\regrad$}
        & \multirow{2}{*}{$\sum_{i=1}^d \binom{\binom{n}{d}}{i}$} & \multirow{2}{*}{$\chamrad$} & \multirow{2}{*}{$\sum_{i=1}^d \binom{n}{i}$} & $s^{2d+5}\delta^{O(d)}$ \vspace*{-0.5em} \\
         & 
         &  &  &  &  as in Rmk. \ref{rmk:complexity_rotational}  \\
        \hline
        \multirow{2}{*}{\text{\rotatebox{-15}{$\small\uparrow$}}} & \multirow{2}{*}{$\regtra$} & \multirow{2}{*}{$\sum_{i=1}^d \binom{\binom{n}{2}}{i}$} & \multirow{2}{*}{$\chamtra$} & \multirow{2}{*}{$n-1$} & $s^{2d+5}\delta^{O(d)}$  \vspace*{-0.5em} \\
         &  &  &  &  & as in Prop. \ref{prop:one cellopt} \\
    \end{tabular}
    }
    \caption{Details of complexity analysis in the two methods of classifying slices. In each case $s,\delta$ are, respectively, the number of equations and upper bound on degree of the equations. They differ for the two decomposition methods. }
    \label{tab:summary_complexity}
\end{table}

It is important to note that, with little effort, the methods we used to prove 
\Cref{thm:intro_algorithm} \ref{mainthm:section_vol_int} can be extended to prove the additional computational results in items \ref{mainthm:halfspaces_int} and \ref{mainthm:projection} on projections and halfspace intersections. The latter is related to the \emph{densest hemisphere problem}: Given a set $K$ of $n$ points on the unit sphere $S^d$ in $d$-dimensional Euclidean space, a hemisphere of $S^d$ is densest if it contains a largest subset of $K$. This problem is already known to be solvable in polynomial time when the dimension $d$ is fixed \cite{Johnson+Preparata1978}.

The proofs of the complexity of our algorithm in the case of projections and halfspace sections are in complete analogy to the previous discussion. Thus, we only sketch the key missing details in the following proof.

\begin{proof}[Proof of \Cref{thm:intro_algorithm} \ref{mainthm:halfspaces_int}, \ref{mainthm:projection}]
For \ref{mainthm:halfspaces_int}, it suffices to note that the same slicing chambers of the central arrangement $\chamrad(P)$ that we used earlier, keep the vertices on the corresponding halfspace $H^+_0$ intact, and the combinatorial type of $P \cap H^+_0$ does not change. Similarly, the triangulation we use 
for the section $P \cap H_0$ can be easily extended to a triangulation of $P \cap H^+_0$. Therefore, \Cref{thm:integral_polynomial_over_P} can be generalized to prove that in each chamber the integral of the halfspace intersection is a rational function. Then, an analogous version of \Cref{notmainthm:section_vol_int} implies the polynomiality of the maximization or minimization.

\Cref{thm:intro_algorithm} \ref{mainthm:projection} is based on the results proved in \Cref{section:projections}. We can compute both the polar $P^\circ$ of our polytope and the associated hyperplane arrangement $\chamrad(P^\circ)$ in polynomial time in fixed dimension, and $\chamrad(P^\circ)$ has polynomially many maximal open chambers. Since projections do not distinguish between central and affine hyperplanes, we do not need to use regions in this framework. In each chamber the function $\int_{\pi_\bu(P)} f(\bx)\dx$ is a polynomial in $\bu\in C\cap S^{d-1}$, and its degree is bounded by $2d(D + 1)-1$. Therefore, we can use \Cref{lemma:algo_Basu} to compute the maximum or the minimum of this polynomial over all projections, in polynomial time when the dimension $d$ is fixed.
\end{proof}

\subsection{Hardness in non-fixed dimension} \label{subsec:hardness}

In \Cref{sec:polytime_complexity} we showed that several optimization problems are solvable in polynomial time for fixed dimension. The purpose of this section is to show that this does not hold true when the dimension is not fixed. 
We begin by proving the hardness of finding slices of maximum volume. For this, we use the following lemma, whose proof was kindly provided to us by Francisco Criado Gallart.

\begin{lemma}\label{lemma:pyramid_large_section}
    Let $K\subset \{\bx \in \R^d \mid x_d=0\}$ be a $(d-1)$-dimensional 
    convex body. There exists a pyramid $C\subset\R^d$ over $K$ such that 
    \[
    \max_{H \text{ \textup{aff. hyperplane} }} \vol (C\cap H) = \vol (K)
    \]
    where $K = C\cap \{\bx \in \R^d \mid x_d=0\}$ and the maximum is uniquely attained 
    at this hyperplane.
\end{lemma}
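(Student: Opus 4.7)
The plan is to take the apex $\mathbf{a} = (\mathbf{a}', h)$ with $\mathbf{a}'$ in the interior of $K$ (so some ball $B(\mathbf{a}', r) \subseteq K$ with $r > 0$) and $h > 0$ chosen sufficiently small relative to the geometry of $K$, and to show that $C = \conv(K \cup \{\mathbf{a}\})$ does the job.

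For hyperplanes parallel to $\{x_d = 0\}$ the analysis is immediate: $H = \{x_d = c\}$ meets $C$ in the scaled copy $(1-c/h)K + (c/h)\mathbf{a}'$, so
\[
  \vol_{d-1}(H \cap C) = (1 - c/h)^{d-1}\vol(K),
\]
which attains $\vol(K)$ exactly at $c = 0$. The interesting case is a hyperplane $H$ with unit normal $\bu = (\bu', u_d)$ satisfying $u_d < 1$. For such $H$ I would apply the standard bound $\vol_{d-1}(H \cap C) \leq \vol_{d-1}(\pi_{\bu^\perp}(C))$ (valid because $H \cap C \subseteq H$ is isometric to its orthogonal projection onto $\bu^\perp$), reducing the problem to showing that the projection volume is strictly less than $\vol(K)$.

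For the projection bound I would use Cauchy's projection formula; approximating $K$ by a polytope if necessary, writing the facets of $K$ as $G_i$ with outer unit normals $\bm_i \in S^{d-2}$ and distances $\delta_i = \dist(\mathbf{a}', \mathrm{aff}(G_i)) \geq r$, the facets of the pyramid $C$ yield
\[
  \vol_{d-1}(\pi_{\bu^\perp}(C)) = \tfrac{u_d}{2}\vol(K) + \tfrac{1}{2(d-1)} \sum_i \vol_{d-2}(G_i)\,\bigl|h\langle \bu',\bm_i\rangle + u_d \delta_i\bigr|.
\]
The proof then splits into two regimes according to the tilt angle $\theta$ with $\cos \theta = u_d$. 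When $\tan \theta \leq r/h$, every summand is nonnegative, since $|\langle \bu', \bm_i\rangle| \leq |\bu'|$ and $u_d \delta_i \geq u_d r \geq h|\bu'|$; the absolute values drop out, and the identities $\sum_i \vol_{d-2}(G_i) \bm_i = \mathbf{0}$ (divergence theorem) and $\sum_i \vol_{d-2}(G_i) \delta_i = (d-1)\vol(K)$ (cone decomposition of $K$ from $\mathbf{a}'$) collapse the expression to exactly $u_d \vol(K)$, which is strictly less than $\vol(K)$ since $u_d < 1$. When $\tan \theta > r/h$ one gets the crude but useful bound $u_d < h/\sqrt{h^2 + r^2}$, so $u_d$ is small; the triangle inequality and $\sum_i \vol_{d-2}(G_i) \leq \vol_{d-2}(\partial K)$ give
\[
  \vol_{d-1}(\pi_{\bu^\perp}(C)) \leq u_d \vol(K) + \tfrac{h}{2(d-1)} \vol_{d-2}(\partial K),
\]
which I can force below $\vol(K)$ by choosing $h$ smaller than an explicit constant depending only on $r$, $\vol(K)$, and $\vol_{d-2}(\partial K)$.

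I expect the main obstacle to be matching up the two regimes so that a single choice of $h$ makes both estimates strict for every $\bu$ with $u_d < 1$; this is resolved by taking $h$ smaller than both $r/2$ and a fixed multiple of $\vol(K)/\vol_{d-2}(\partial K)$, so that the "close to $e_d$" regime $\tan \theta \leq r/h$ and the "far from $e_d$" regime $\tan \theta > r/h$ together cover $S^{d-1} \setminus \{e_d\}$ and the bound is strict in each. The key insight that makes everything work is the cancellation $\sum \vol_{d-2}(G_i)\bm_i = \mathbf{0}$, which makes the projection volume of the pyramid equal to \emph{exactly} $u_d \vol(K)$ on a large open set of normals near $e_d$; without this cancellation one would only get bounds of the form $u_d \vol(K) + O(h)$, which blow up near $u_d = 1$. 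For convex bodies $K$ that are not polytopes, the same proof goes through by replacing sums by surface integrals in Cauchy's formula, or by a routine approximation argument.
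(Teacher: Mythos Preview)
Your argument is correct and takes a genuinely different route from the paper's. The paper fixes $h=r/2$ and argues in two steps: first, it shows (via a cone over the ball $B(\mathbf a',r)$) that the width of $C$ is attained \emph{uniquely} in the direction $e_d$; second, for any competing hyperplane $H$ with normal $\bu\neq e_d$ and $\vol(H\cap C)\geq\vol(K)$, it places the bipyramid $\conv((H\cap C),\bp_1,\bp_2)$ inside $C$ (where $\bp_1,\bp_2$ extremize $\langle\bu,\cdot\rangle$ on $C$) and compares volumes to force $w(\bu)<h$, contradicting the width computation.

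Your approach instead bounds every section with normal $\bu$ simultaneously by the shadow $\pi_{\bu^\perp}(C)$ and then computes that shadow exactly via Cauchy's formula. The paper's route is shorter and more elementary---it needs only containment and the width notion---while yours is more quantitative: it yields the explicit identity $\vol_{d-1}(\pi_{\bu^\perp}(C))=u_d\,\vol(K)$ on the cone $\{\,|\bu'|\le (r/h)u_d\,\}$ (a pleasant fact in its own right, coming from the Minkowski relation $\sum_i\vol_{d-2}(G_i)\bm_i=0$), and it gives an explicit threshold for $h$ in terms of $r$, $\vol(K)$, and $\vol_{d-2}(\partial K)$. Your method also proves the stronger statement that every orthogonal shadow of $C$ other than the one onto $\{x_d=0\}$ has area strictly below $\vol(K)$. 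The only places to tighten are cosmetic: say explicitly that you take $u_d\ge 0$ by symmetry, and for non-polytopal $K$ carry out the surface-area-measure version of Cauchy rather than gesturing at approximation (the identities $\int_{S^{d-2}}\bm\,dS_K=0$ and $\int_{S^{d-2}}(h_K(\bm)-\langle\mathbf a',\bm\rangle)\,dS_K=(d-1)\vol(K)$ substitute directly for your two sums).
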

\begin{proof}
    Consider a point $\bp = (p_1,\ldots,p_{d-1})$ in the interior of $K$, let $h>0$ and define the pyramid over $K$ as 
    \[
    C = \conv \big(K, \sma \bp \\ h \strix\big) \subset \R^d.
    \]
    Our claim is that there exists $h>0$ such that the largest hyperplane section of $C$ is the base $K$. The proof will be divided into two steps. The first one concerns proving that there exists $h>0$ such that the width of $C$ is achieved uniquely in direction of $(0,\ldots,0,1)$. Recall that the width of $C$ is defined as \cite{grandpabibleI-1994}
    \[
    \omega(C) = \min_{\bu \in S^{d-1}} \left( \max_{\bx\in C} \langle \bu,\bx\rangle - \min_{\bx\in C} \langle \bu,\bx\rangle \right).
    \]
    Since $\bp$ lies in the interior of $K$, there exists $r>0$ such that the $(d-1)$-dimensional ball $B_{\bp,r}$ centered at $\bp$ of radius $r$ is strictly contained in $K$. Therefore, taking $h=\frac{r}{2}$, the cone $\widetilde{C} = \conv \big( B_{\bp,r}, \sma \bp \\ h \strix \big)$ is contained in $C$. Using some elementary geometry, it can be proved that $\omega(\widetilde{C}) = \frac{r}{2}$ and it is achieved uniquely in direction $(0,\ldots,0,1)$. Since $\widetilde{C}\subset C$, the width of $C$ is at least $\frac{r}{2}$. By construction, the width of $C$ is strictly larger than the width of $\widetilde{C}$ in all directions except $(0,\ldots,0,1)$, where they coincide. Hence, $\omega(C) = h$.

    The second step of the proof is to show that, with this choice of $h$ 
    so that the width of $C$ is realized by the last coordinate vector, 
    the base of $C$ is the hyperplane section with the largest volume.
    Notice that the volume of a section which does not intersect the base $K$ is always smaller that the volume of $K$ itself, by construction. Therefore, 
    assume that the section with largest volume is not the base but it is defined 
    by a hyperplane $H$ such that $H\cap K \neq \emptyset$ and let $\bu$ be a 
    normal vector to $H$. Let $\mathbf{p}_1,\mathbf{p}_2 \in C$ be respectively 
    the maximizer and minimizer of $\langle \bu,\bx\rangle$ over $C$. Then $\langle \bu,\mathbf{p}_1\rangle-\langle \bu,\mathbf{p}_2\rangle > \omega(C) =h$ and 
    the bipyramid $\conv \big((H\cap C), \mathbf{p}_1,\mathbf{p}_2 \big)$ is contained 
    in $C$. Then, we have the following chain of (in)equalities:
    \begin{align*}
        \frac{1}{d} \vol K \cdot h &= \vol C > \vol \left( \conv \big((H\cap C), \mathbf{p}_1,\mathbf{p}_2 \big) \right)\\
        &= \frac{1}{d} \vol (H\cap C) \cdot (\langle \bu,\mathbf{p}_1\rangle-\langle \bu,\mathbf{p}_2\rangle)\\[0.5em]
        &> \frac{1}{d} \vol K \cdot (\langle \bu,\mathbf{p}_1\rangle-\langle \bu,\mathbf{p}_2\rangle)
    \end{align*}
    which implies that $\langle \bu,\mathbf{p}_1\rangle-\langle \bu,\mathbf{p}_2\rangle < h$, giving a contradiction since we chose $h$ to be the width of $C$.
\end{proof}

It is well-known that it is $\# P$-hard to compute volumes of polytopes in arbitrary dimension, 
when presented in facet or vertex descriptions \cite{DyerFrieze88,BrightwellWinkler91,khachiyan93,lawrence91}. 
We can therefore combine these classical results with \Cref{lemma:pyramid_large_section}, yielding that it is hard to find the slice of a polytope with maximal volume.
\begin{prop}\label{prop:volume-nonfixed-hard}
    Let $P$ be a rational polytope of arbitrary dimension. It is $\#P$-hard to compute the  volume of the hyperplane section $P\cap H$ with largest volume. 
\end{prop}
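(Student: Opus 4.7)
The plan is to reduce from the classical $\#P$-hard problem of computing the volume of a rational polytope, which is known to be hard in both vertex and facet description \cite{DyerFrieze88,BrightwellWinkler91,khachiyan93,lawrence91}. Given an arbitrary rational polytope $K \subset \R^{d-1}$, I will construct in polynomial time a rational pyramid $C \subset \R^d$ satisfying the hypothesis of \Cref{lemma:pyramid_large_section}, so that the maximum-volume affine hyperplane section of $C$ equals $\vol(K)$ and is attained uniquely at the base. Therefore any polynomial-time algorithm that outputs the volume of the largest slice of $C$ would compute $\vol(K)$ in polynomial time, contradicting $\#P$-hardness.

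Concretely, I embed $K$ into $\R^d$ via the inclusion $K \hookrightarrow \{\bx \in \R^d \mid x_d = 0\}$, pick a rational interior point $\bp \in \operatorname{int}(K)$, and choose a rational $r>0$ with $B_{\bp,r} \subseteq K$. Then I set $h = r/2$ and define
\[
C \;=\; \conv\!\left(K \,\cup\, \{(\bp,h)\}\right) \;\subset\; \R^d.
\]
By the proof of \Cref{lemma:pyramid_large_section}, the width of $C$ equals $h$ and is uniquely attained in direction $\mathbf{e}_d$, so the base $K = C \cap \{x_d = 0\}$ is the unique maximum-volume hyperplane section of $C$, with $\vol(C \cap \{x_d = 0\}) = \vol(K)$.

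The main obstacle, and the step requiring the most care, is ensuring that $\bp$, $r$, and hence the description of $C$, have bit-size polynomial in the bit-size of the input encoding of $K$; otherwise the reduction is not a polynomial-time reduction. For a rational polytope given in vertex description, $\bp$ can be taken as the average of the vertices (clearly polynomial), and an inscribed ball radius around $\bp$ can be computed in polynomial time by solving the linear program that computes the Chebyshev radius about $\bp$, or by lower-bounding the distance from $\bp$ to each facet of $K$ after converting to facet description via standard polynomial-time polyhedral routines in the given dimension (for facet descriptions the argument is symmetric). This produces a rational $r>0$ of polynomial bit-size. Having established the polynomial-time rational reduction and invoking \Cref{lemma:pyramid_large_section}, the contrapositive yields $\#P$-hardness of computing the volume of a maximum-volume slice, completing the proof.
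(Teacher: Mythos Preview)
Your argument is essentially the same as the paper's: reduce from $\#P$-hard volume computation by building a flat pyramid over the hard polytope and invoking \Cref{lemma:pyramid_large_section} so that the maximum slice is the base. You are in fact more careful than the paper about making the reduction data rational with polynomial bit-size.

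One technical slip: your sentence about ``converting to facet description via standard polynomial-time polyhedral routines'' is false in variable dimension, since $V$-to-$H$ conversion can blow up exponentially, so your proposed computation of an inscribed radius for a $V$-polytope does not go through as written. This is not a gap in the approach, only in that side remark: simply run the reduction from $H$-polytopes (for which volume is already $\#P$-hard by \cite{DyerFrieze88} and the Chebyshev-center LP gives a rational interior point $\bp$ and radius $r$ directly), or from an explicit hard family such as order polytopes or zonotopes, as the paper does.
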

\begin{proof}
    Suppose by contradiction that one could compute the volume of the largest hyperplane section of any polytope $P$ efficiently. Now consider the pyramid $P=\conv (Q, \sma \bp \\ h \strix)$ where $\bp\in Q, h > 0$ and $Q$ is a zonotope or an order polytope, or any polytope for which it is known that it is $\#P$-hard to compute the volume (for details see \cite{DyerFrieze88,BrightwellWinkler91,DyerGritzmannHufnagel98}).
By \Cref{lemma:pyramid_large_section} we can chose $h>0$ in such a way that the section of $P$ with largest volume is actually $Q$. This implies, by our hypothesis, that we could compute the volume of $Q$ efficiently too, which gives a contradiction.
\end{proof}

\Cref{prop:volume-nonfixed-hard} implies that, given a family of polytopes, computing their hyperplane sections with the largest volume is in general a hard task. However, by \Cref{thm:intro_algorithm}\ref{mainthm:section_vol_int}, with our hyperplane arrangements we can compute it in polynomial time when the dimension of the polytopes in the family is fixed. Similarly,  by \Cref{thm:intro_algorithm}\ref{mainthm:sect_k_faces} and \ref{mainthm:halfspaces_int} we can detect the slices with the maximum number of vertices. The following proposition shows that the weighted analogue of this task is $NP$-hard. Furthermore we show that same holds for the of maximizing the number of vertices which are contained in the intersection of $P$ with a central halfspace.

\begin{prop} Let $P$ be a polytope of arbitrary dimension with weights on its edges. It is $NP$-hard to compute the hyperplane section $P\cap H$ that maximizes the sums of weights of edges it intersects.
Similarly, given an arbitrary polytope with vertices in a sphere with center $\bf{o}$, finding a halfspace section, for hyperplanes passing through $\bf{o}$, that maximizes the number of vertices inside the halfspace is $NP$-hard for arbitrary dimension.
\end{prop}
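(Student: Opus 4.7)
The plan is to prove each NP-hardness statement by reducing from a well-known NP-hard problem: MAX-CUT for the weighted-edge problem, and the \emph{densest open hemisphere problem} (proved NP-hard in non-fixed dimension by Johnson and Preparata in the same reference \cite{Johnson+Preparata1978} we cited for the fixed-dimension polynomial-time algorithm) for the halfspace problem.

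For the halfspace statement, given an instance $\bp_1,\dots,\bp_n\in S^{d-1}$ of the densest hemisphere problem, I would simply take $P=\conv(\bp_1,\dots,\bp_n)$. Since all $\bp_i$ lie on a common sphere centered at $\mathbf{o}$, they are precisely the vertices of $P$. For any central hyperplane $H_0$, the vertices of $P$ contained in a corresponding closed halfspace are exactly the $\bp_i$ in the chosen hemisphere, so maximizing the number of vertices of $P$ in a central halfspace coincides with the densest hemisphere problem, and NP-hardness follows immediately from \cite{Johnson+Preparata1978}.

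For the weighted-edge statement, I would reduce from MAX-CUT on a weighted graph $G=(V,E,w)$ with $|V|=n$. Let $P=\Delta=\conv(e_1,\dots,e_n)$ be the standard simplex, viewed inside its affine hull $\{\sum x_i = 1\}$, and identify each $e_i$ with the vertex $i \in V$. Assign the weight $w_{ij}$ to the simplex edge $[e_i,e_j]$ whenever $\{i,j\}\in E$, and weight $0$ otherwise. Because $e_1,\dots,e_n$ are affinely independent, every bipartition $V=S\sqcup S^c$ is realized by a generic hyperplane — for instance $\{\sum_{i\in S}x_i=\tfrac12\}$ — which meets $\Delta$ in exactly the simplex edges crossing the partition. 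By the chamber decomposition of \Cref{sec:mergingslices}, the weighted-edge objective is constant on each open slicing chamber, and the maximum value over generic hyperplanes equals $\mathrm{maxcut}(G,w)$, yielding a polynomial-time reduction from MAX-CUT.

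The main obstacle, in the first reduction, is the presence of \emph{degenerate} hyperplanes lying on the boundaries of slicing chambers: a hyperplane through a vertex $v$ of $\Delta$ formally satisfies $F\cap H\neq\emptyset$ for every edge $F$ incident to $v$, which could in principle inflate the objective above $\mathrm{maxcut}(G,w)$. The plan is to circumvent this by phrasing the optimization — as in \Cref{thm:intro_algorithm} — over the top-dimensional slicing chambers, on which the objective is piecewise constant, so that the optimum is attained at generic hyperplanes and the equality with $\mathrm{maxcut}(G,w)$ is preserved; alternatively, one can perturb the weights by adding a sufficiently negative constant to the non-edges of $G$ so that any hyperplane passing through a vertex becomes strictly suboptimal compared to the best proper bipartition.
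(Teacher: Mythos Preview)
Your approach matches the paper's: both parts use the same reductions---MAX-CUT embedded in a simplex with $0/1$ (or general) edge weights for the first statement, and the densest-hemisphere result of Johnson--Preparata for the halfspace statement. Your treatment of degenerate hyperplanes through vertices is in fact more careful than the paper's own proof, which glosses over this point.
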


\begin{proof} The hardness proof will be a reduction to the
It is well-known  the MAX-CUT problem is NP-hard (it is in the original list of famous NP-hard problems in \cite{NPhardbook79}). W
Note that every graph $G$ with $N$ nodes is a subgraph of the complete graph $K_N$, which is the  graph of the $(N-1)$-dimensional simplex $\Delta_N$. We assign weight $1$ to the edges of $G$ and zero otherwise. Now note that every subset of vertices of $\Delta_N$ 
can be separated by a hyperplane as long as edges that are not in $G$ have weight 
zero they do not count in the weighted cuts. Thus, solving the max cut on the original graph $G$ is equivalent to solving the task of \Cref{thm:intro_algorithm} \ref{mainthm:weighted_k_faces} for vertices inside the simplex $\Delta_N$.
We have proved that the problem is already hard for simplices whose edges have weights zeros and ones, therefore the hardness of the stronger statement in the proposition follows.

For the second part of the statement, notice that the \emph{densest hemisphere problem}, mentioned already in \Cref{sec:polytime_complexity}, is a special case of \Cref{thm:intro_algorithm} \ref{mainthm:halfspaces_int}, where the vertices of the polytope are taken on the sphere. Therefore its hardness directly implies hardness 
for our statement. Now, in a now classic paper in computational geometry Johnson and Preparata showed when $d$ is fixed there exists a polynomial time algorithm which solves the problem in polynomial time \cite{Johnson+Preparata1978}. But they also showed densest hemisphere is known to be $NP$-hard when the dimension $d$ are arbitrary, which implies our statement is hard in arbitrary dimension.
\end{proof}

To conclude, we conjecture that this is $NP$-hard even without weights and more in general for all dimensions of faces. We also conjecture that it is $\#P$-hard to compute the best projection. This is suggested by the fact that it is hard to compute the volume of \emph{zonotopes} as this implies that computing the volumes of projections of polytopes is hard \cite{DyerGritzmannHufnagel98}. 

\begin{conjs*} Let $P$ be a polytope. Then,
\begin{enumerate}[label=\textup{(}\roman*\textup{)}]
\item it is $NP$-hard to find a hyperplane $H$ which maximizes the number of $i$-dimensional faces of $P\cap H$.
\item it is $\#P$-hard to find a hyperplane $H$ which maximizes the volume of the orthogonal projection of $P$ onto $H$.
\item it is $NP$-hard to find a central hyperplane $H_0$ which maximizes the volume of the halfspace section $P\cap H^+_0$.
\end{enumerate}
\end{conjs*}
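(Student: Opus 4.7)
The plan is to establish each conjecture via reductions from well-known intractable problems, mirroring the style of \Cref{lemma:pyramid_large_section} and \Cref{prop:volume-nonfixed-hard}, though each part presents a distinct obstacle.

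For (i), I would attempt a reduction from \textsc{Max-Cut} (or a variant of the \textsc{Maximum Feasible Linear Subsystem} problem), exploiting the observation that an $i$-face of $P\cap H$ corresponds to an $(i+1)$-face of $P$ crossed by $H$. In the base case $i=0$, maximizing $f_0(P\cap H)$ equals maximizing the number of edges of $P$ which $H$ strictly separates. Given a graph $G=(V,E)$, the goal is to build a polytope $P_G$ whose edge graph is (essentially) $G$, so that any hyperplane-induced bipartition of the vertices corresponds to a cut in $G$. The main obstacle is the realizability issue: naive embeddings (e.g.\ placing the $|V|$ vertices at the vertices of a simplex $\Delta_{|V|}$) create a complete edge graph, so cut size depends only on the sizes of the two sides, not on $G$. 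Overcoming this likely requires a gadget construction, for instance a product or iterated pyramid over edge-gadgets $\{q_e, r_e\}$ for $e\in E$, together with a high-dimensional backbone forcing only the edge-gadget segments to be edges of $P_G$; the combinatorial realization step is where I expect the proof to stall.

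For (ii), the natural strategy is reduction from the $\#P$-hard problem of computing the volume of a zonotope \cite{DyerGritzmannHufnagel98}. Given a zonotope $Z\subset\R^{d-1}\times\{0\}$, I would construct a polytope $P\subset\R^d$ (for instance a short bipyramid $P=\conv(Z,\pm h\, e_d)$ with an appropriately small height $h>0$) whose maximum hyperplane projection is uniquely attained at $\pi_{e_d^{\perp}}(P)=Z$ and equals $\vol_{d-1}(Z)$. The analysis would proceed through the Cauchy projection formula
\[
\vol_{d-1}(\pi_{u^{\perp}}(P)) \;=\; \tfrac{1}{2}\sum_{F \text{ facet of } P} |\langle u,n_F\rangle|\,\vol_{d-1}(F),
\]
which must be shown to achieve its strict maximum at $u=e_d$, with side-facet contributions penalising any tilt. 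This is the projection analogue of the width argument in \Cref{lemma:pyramid_large_section}, and I expect it to be the main technical obstacle: any tilt of $u$ away from $e_d$ both decreases $|\langle u, e_d\rangle|$ (reducing the contribution of the two apex facets) and simultaneously increases $|\langle u, n_F\rangle|$ for the side facets, so the trade-off must be made uniformly unfavourable by choosing $h$ very small depending on the inradius of $Z$, in analogy with the width argument of Criado.

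For (iii), the cleanest idea is central symmetrization: given a polytope $Q$ whose volume is $\#P$-hard to compute, set $P:=Q\times[-1,1]$ and translate so that its centroid is at the origin. Then $P$ is centrally symmetric, and every central hyperplane $H_0$ satisfies $\vol(P\cap H_0^+)=\vol(P)/2=\vol(Q)$, so the decision version ``is the maximum halfspace-section volume at least $\alpha$?'' is equivalent to computing $\vol(Q)$, proving $\#P$-hardness and hence $NP$-hardness of the associated decision problem. The difficulty is that this construction makes \emph{every} central hyperplane optimal, so finding one is trivial; to prove hardness of the \emph{search} problem as literally stated, I would break the symmetry by attaching a small decoration (e.g.\ a low-volume spike or an encoded densest-hemisphere instance \cite{Johnson+Preparata1978}) that singles out a unique optimum whose identification requires solving an $NP$-hard combinatorial problem. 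Engineering a symmetry-breaking perturbation that preserves the hardness of the volume computation while making the optimal $H_0$ unique and informative is the principal obstacle.
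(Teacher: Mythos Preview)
The statement you are attempting to prove is explicitly labelled a \emph{Conjecture} in the paper, not a theorem: the paper does not contain a proof of any of the three items. The authors write, just before the \texttt{conjs*} environment, ``we conjecture that this is $NP$-hard even without weights\ldots We also conjecture that it is $\#P$-hard to compute the best projection. This is suggested by the fact that it is hard to compute the volume of zonotopes\ldots'' So there is nothing to compare your proposal against; you are attacking open problems.

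That said, your heuristics are well aligned with the paper's own intuitions. For (ii), your reduction from zonotope volume via a flat bipyramid and the Cauchy projection formula is precisely the direction the authors hint at; the obstacle you flag (making the trade-off between apex and side facets uniformly unfavourable for all tilts $u\neq e_d$) is real and is the genuine missing step. For (i), you have correctly diagnosed why the paper's own weighted-\textsc{Max-Cut} reduction (the Proposition immediately preceding the conjectures) fails in the unweighted case: embedding in $\Delta_{|V|}$ makes the edge graph complete, so the cut count is determined by partition sizes alone. Your gadget idea is the natural next move, but as you say, realizing a prescribed graph as the $1$-skeleton of a polytope in polynomially bounded dimension is the hard part, and your sketch does not get past it. For (iii), your first construction (centrally symmetric $P=Q\times[-1,1]$) indeed trivializes the search problem, and your proposed fix via a symmetry-breaking spike or an embedded densest-hemisphere instance is too vague to constitute a reduction.

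In short: none of your three sketches is a proof, each stalls at exactly the point you identify, and the paper itself offers no proof to compare against because the statements are open.
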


\section{Experimental results}\label{section:applications}

In this last section, we present some explicit computations carried out using the algorithms from the proofs of \Cref{thm:integral_polynomial_over_P,thm:parallel_sections,thm:piecewise_rat_translate_rotation,thm:piecewise_rat_rotate_translation}. We present slices of maximal volume of all five Platonic solids, optimal slices of the $3$-dimensional permutahedron for various optimality criteria, and present the different combinatorial types of polytopes which can occur as affine hyperplane sections of the cross-polytope of dimension $4$ and $5$. 

Taking into account the complexity analysis from \Cref{sec:complexity}, we chose to use the approach from \Cref{sec:rotating-the-translation} to compute general affine hyperplane sections. Since the maximum volume slice of a centrally symmetric polytope is always a central section, we used the approach from \Cref{sec:rotational_slices} to find the slice of maximum volume for the centrally symmetric Platonic solids. 
We implemented the algorithm in \texttt{SageMath (version 9.2)} \cite{sagemath} for all approaches, which computes the respective arrangements and the rational functions, or representatives of the combinatorial properties respectively. 
The implementations of all algorithms are available upon request.
The maximization relies on the \texttt{Maximize}-command of \texttt{Mathematica (version 13.2)} \cite{mathematica}.

We emphasize that all computations have been performed on an ordinary laptop, and that our implementations can be largely optimized. For example, the computation for each region is independent and thus, after finishing the computation of the regions, this process can be parallelized.

\begin{example}[Permutahedron]\label{ex:permu}
    Let $P\subset \R^4$ be the $3$-dimensional permutahedron defined as the convex hull of the permutations of $(1,2,3,4)$. This is a $3$-dimensional polytope contained in the hyperplane $\{x_1+x_2+x_3+x_4 =10\}\subset \R^4$ having Euclidean volume $32$. One of the hyperplane sections of $P$ with maximum volume is the convex hull of the points
    \[
    (1,2,3,4), (1,3,2,4), (2,4,1,3), (3,4,1,2), (4,3,2,1), (4,2,3,1), (3,1,4,2), (2,1,4,3).
    \]
    The other such slices can be obtained by symmetry. 
    The hyperplane in the affine span of $P$ which produces that section has equation 
    \[
    \{x_1+x_2+x_3+x_4 =10\} \cap \{x_1+x_4 =5\},
    \]
    and the volume of the section is $14$. This is visualized in \Cref{fig:permu-vol-max} in the introduction. 
    
    On the other hand, the slice of $P$ through the origin having minimal volume is provided by the intersection of $P$ with
    \[
    \{x_1+x_2+x_3+x_4 =10\} \cap \{x_1-x_2 =0\},
    \]
    and the volume of this polygon is $8\sqrt{2}$. It is the convex hull of the points 
    \[
    (\tfrac{3}{2}, \tfrac{3}{2}, 3, 4), (\tfrac{3}{2}, \tfrac{3}{2}, 4, 3), 
    (\tfrac{5}{2}, \tfrac{5}{2}, 1, 4), (\tfrac{5}{2}, \tfrac{5}{2}, 4, 1),
    (\tfrac{7}{2}, \tfrac{7}{2}, 1, 2), (\tfrac{7}{2}, \tfrac{7}{2}, 2, 1),
    \]
    displayed in \Cref{fig:permu-vol-min}.
    Notice that this is the slice of the permutahedron fixed by the permutation $\sigma = (1 2)$, object of interest for \cite{Ardilaetal2021slicingpermutahedron}.
    
    From a purely combinatorial point of view, there are eight different types of slices of the permutahedron. These are polygons with $3,4,\ldots,10$ vertices. A section with $10$ vertices is shown in \Cref{fig:permu-vcs}. \Cref{fig:permu_all_sections} presents the three optimal slices discussed in this example, projected onto their affine spans.
    \begin{figure}[ht]
        \centering
        \begin{subfigure}[t]{0.3\textwidth}
            \centering
            \includegraphics[width=0.7\textwidth]{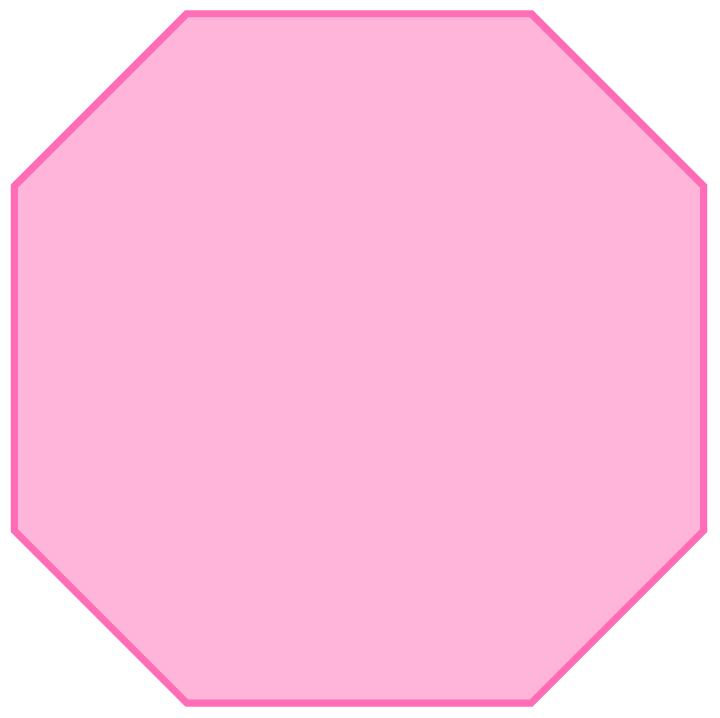}
            \caption{Slice of maximum volume.}
        \end{subfigure}
        \begin{subfigure}[t]{0.3\textwidth}
            \centering
            \includegraphics[width=0.745\textwidth]{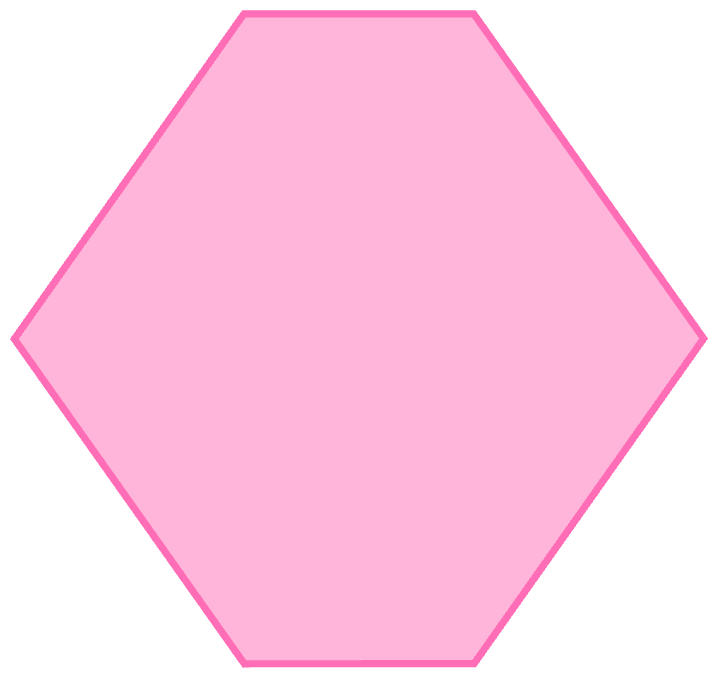}
            \caption{Slice of minimum volume containing the origin.}
        \end{subfigure}
        \begin{subfigure}[t]{0.3\textwidth}
            \centering
            \includegraphics[width=0.7\textwidth]{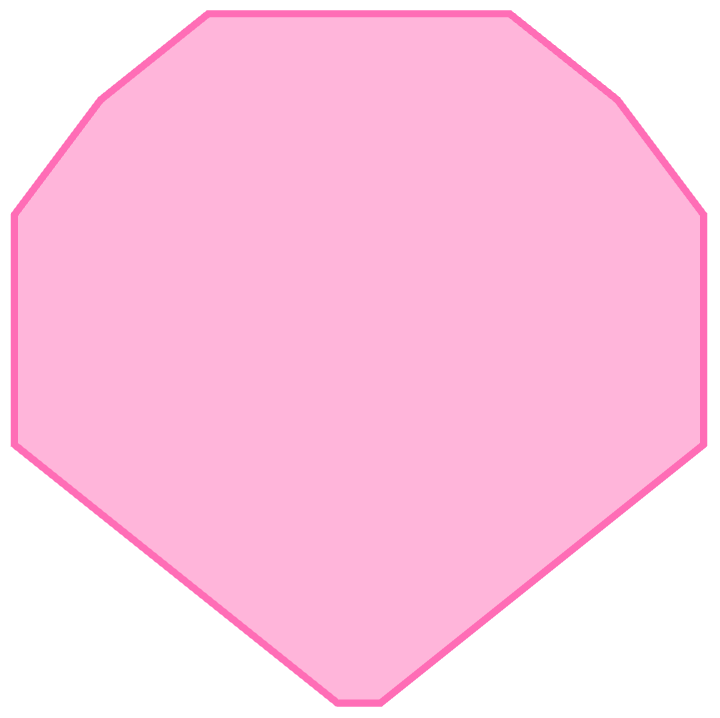}
            \caption{Slice with maximum number of vertices.}
        \end{subfigure}
        \caption{Optimal slices of the $3$-permutahedron for different optimality criteria.}
        \label{fig:permu_all_sections}
    \end{figure}
\end{example}

\begin{example}[Platonic solids and maximum volume]
Using our algorithm, we can find the slice with the largest volume of the Platonic
solids. Due to their symmetries, such a hyperplane is not unique. We give here the description of one possible answer. The others can be recovered using the symmetries of the polytopes.
We summarize our findings in \Cref{tab:platonic_solids} and we visualize them in \Cref{fig:PS}. We point out the surprising case of the dodecahedron, for which the slice with the largest volume does not contain any vertex of the dodecahedron.
\end{example}

\begin{figure}[!h]
    \centering
    \includegraphics[width=0.19\textwidth]{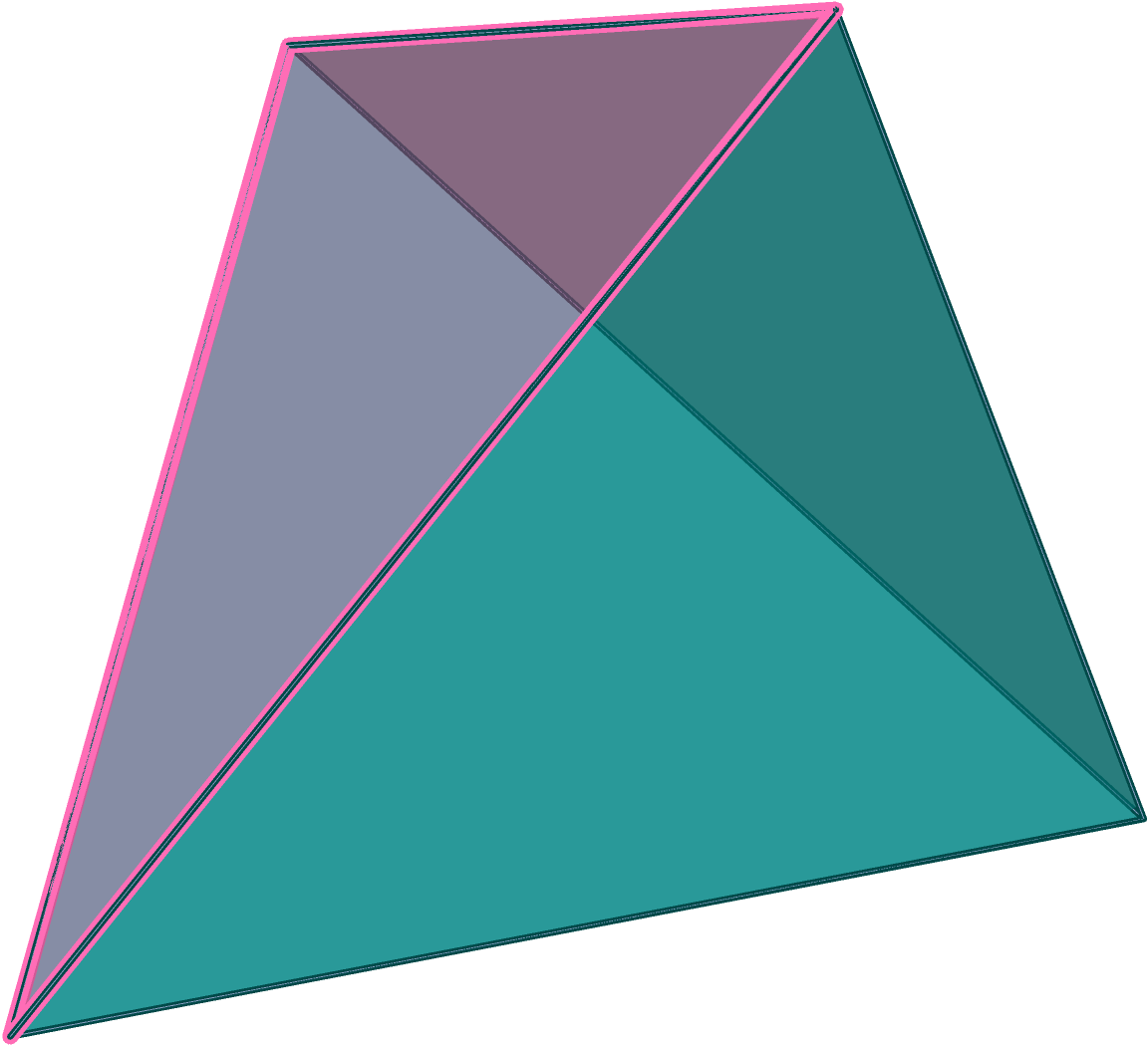}\qquad 
    \includegraphics[width=0.18\textwidth]{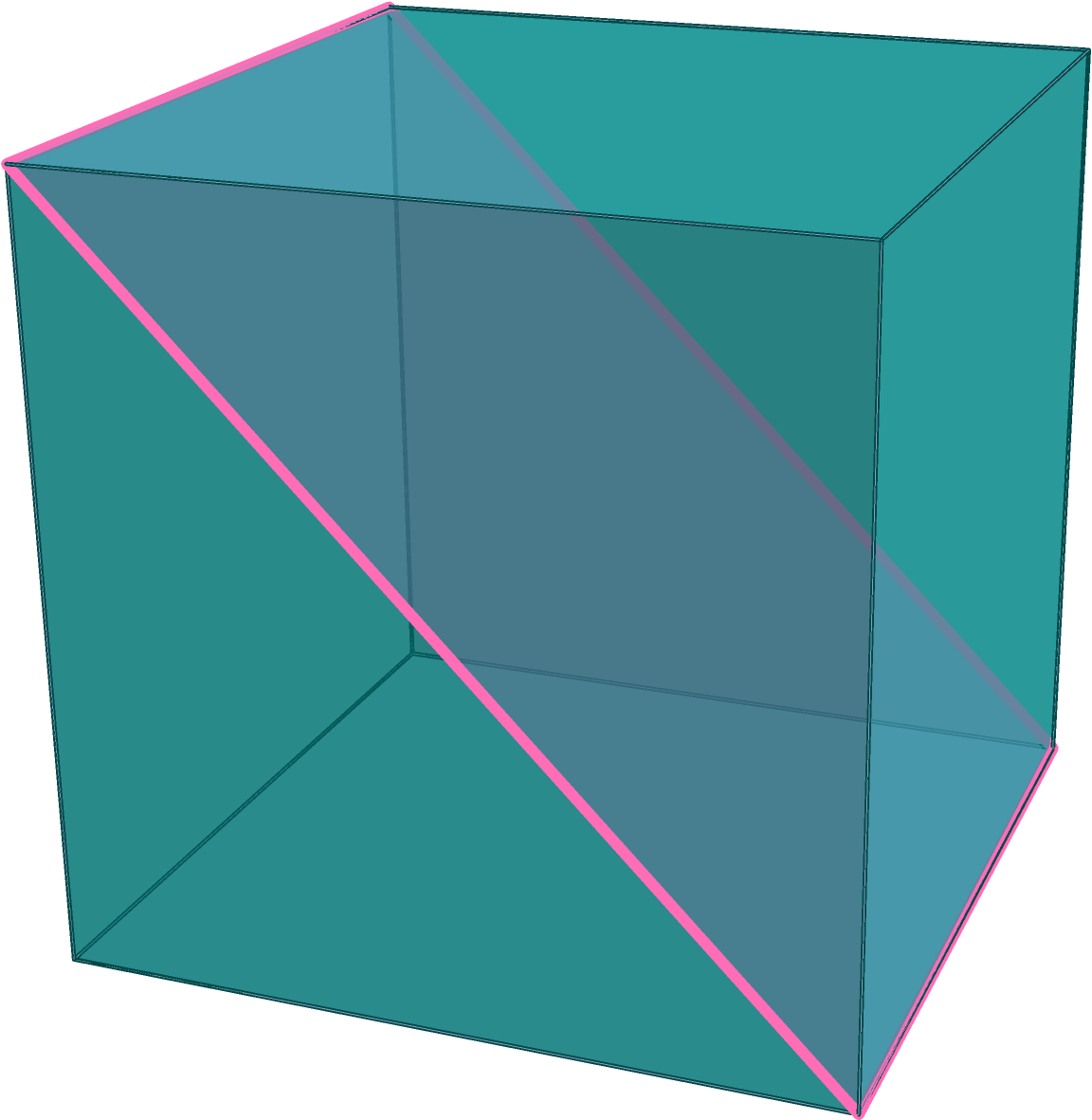}\qquad 
    \includegraphics[width=0.2\textwidth]{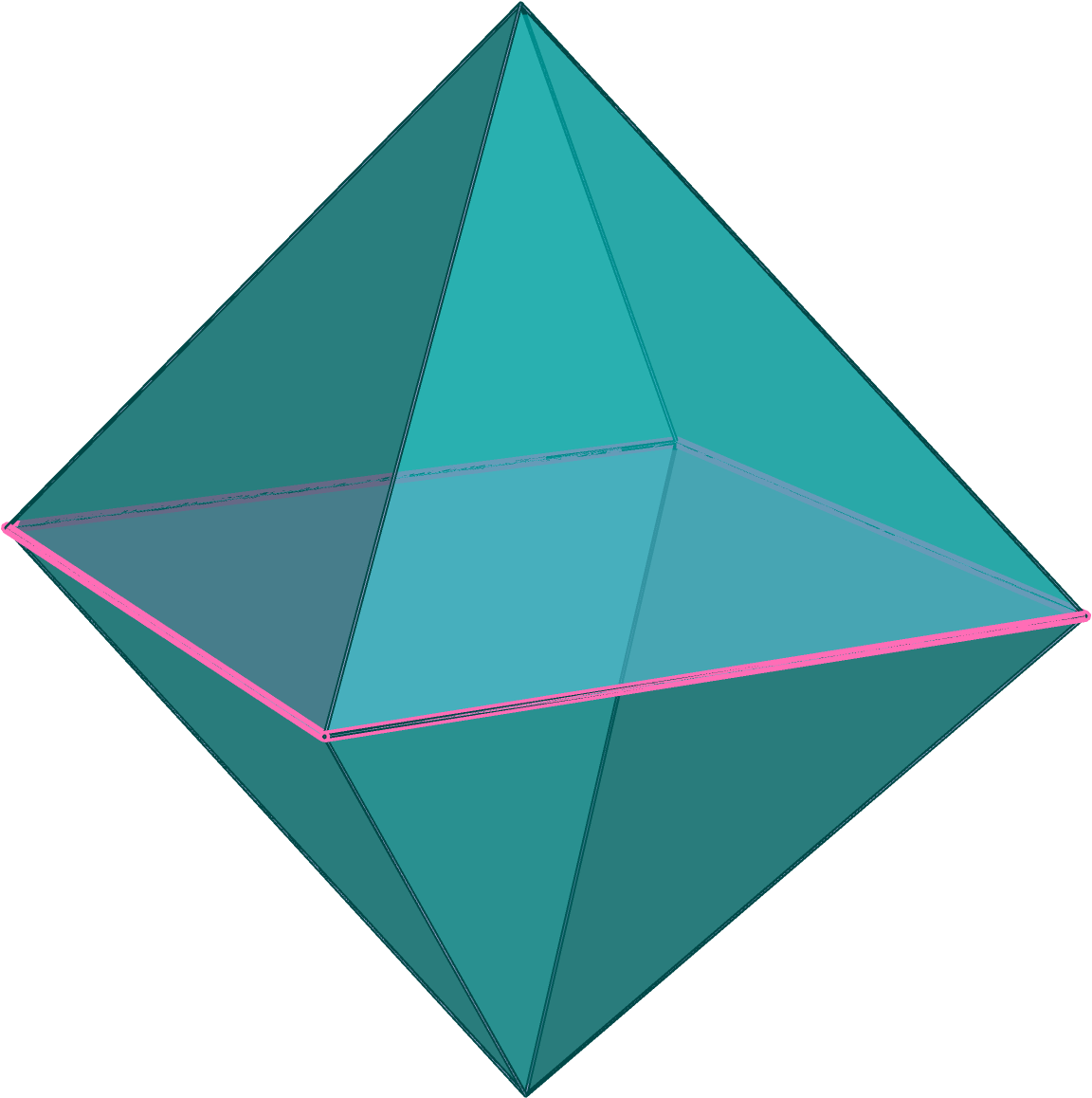}\\ 
    \includegraphics[width=0.23\textwidth]{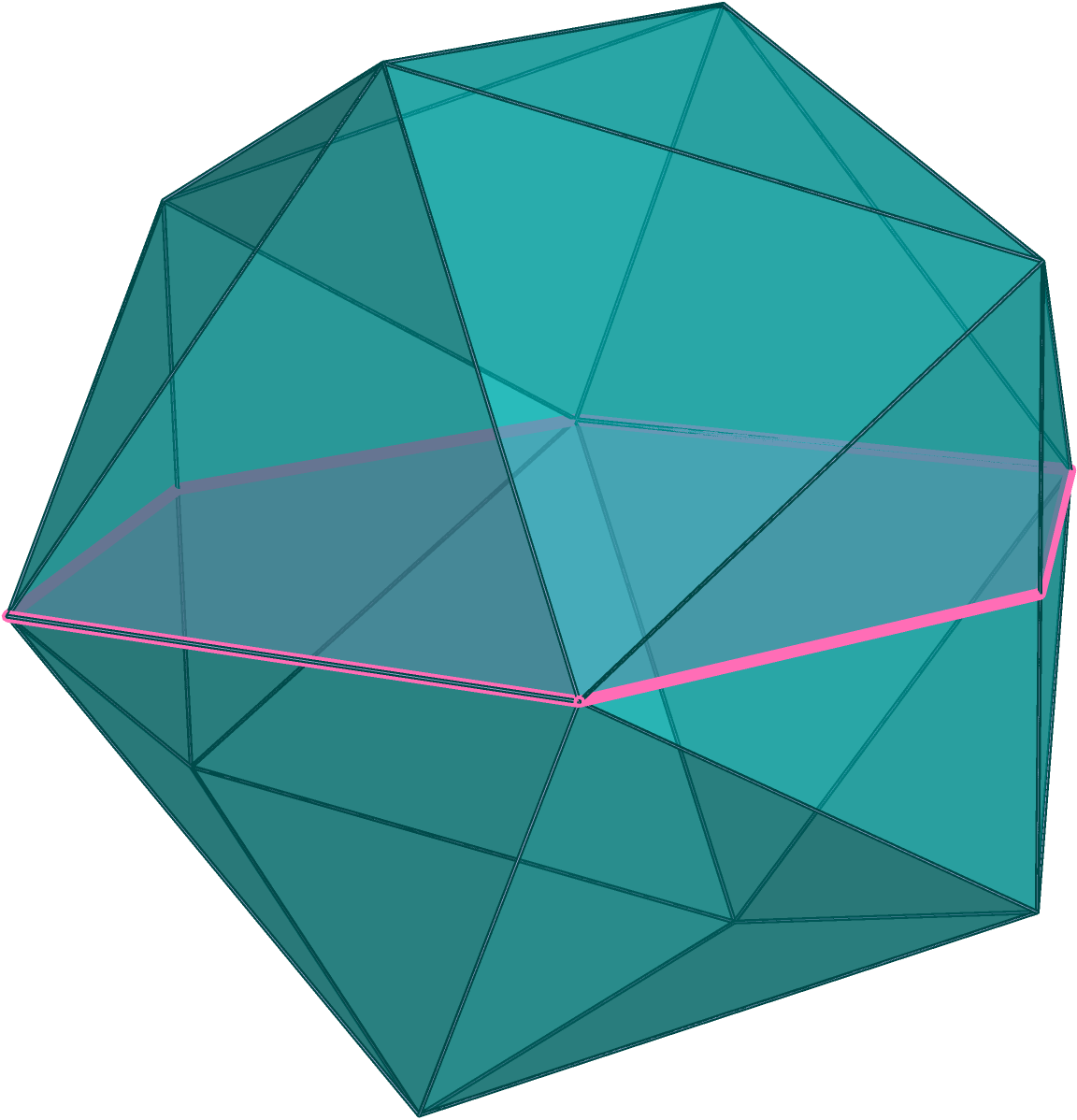}\qquad 
    \includegraphics[width=0.23\textwidth]{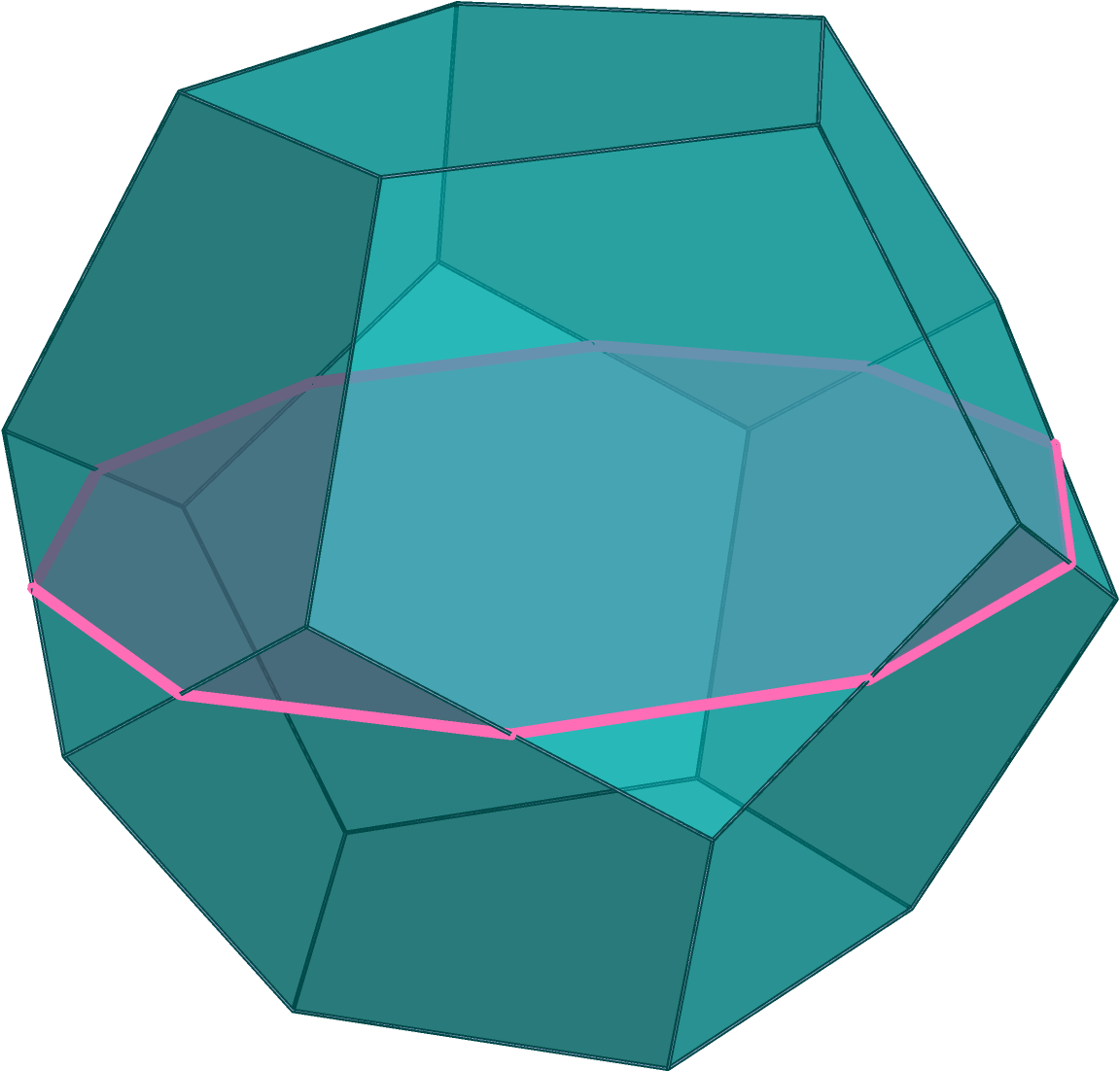} 
    \caption{The Platonic solids and their pink hyperplane section (unique up to symmetry) with the largest volume.}
    \label{fig:PS}
\end{figure}
\begin{table}[!ht]
    \scriptsize
    \centering
    {\def\arraystretch{1.8}
    \begin{tabular}{c||c|c|c|c}
        $P$ & vertices of $P$ & $\vol P$ & hyperplane $H$ & $\vol (P\cap H)$ \\
        \hline
        \hline
        \multirow{2}{*}{tetrahedron} & $(\pm 1, 0, -\frac{1}{\sqrt{2}})$, & \multirow{2}{*}{$\frac{2 \sqrt{2}}{3}$} & \multirow{2}{*}{$2 x +1 =\sqrt{2} z$} & \multirow{2}{*}{$\sqrt{3}$} \\
        & $(0,\pm 1,\frac{1}{\sqrt{2}})$ & &  \\[1ex]
        \hline
        cube & $(\pm 1, \pm 1, \pm 1)$ & $8$ & $x+y=0$ & $4\sqrt{2}$ \\
        \hline
        \multirow{3}{*}{octahedron} & $(\pm 1, 0, 0)$, & \multirow{3}{*}{ $\frac{4}{3}$ } & \multirow{3}{*}{$z = 0$} & \multirow{3}{*}{ $2$ } \\
         & $(0, \pm 1, 0)$, & & & \\
         & $(0, 0, \pm 1)$ & & & \\
        \hline
        \multirow{3}{*}{icosahedron} & $\big( 0, \pm \frac{1}{2}, \pm (\frac{\sqrt{5}+1}{4}) \big)$, & \multirow{3}{*}{$\frac{5 \left(\sqrt{5}+3\right)}{12}$} & \multirow{3}{*}{$z = 0$} & \multirow{3}{*}{$\frac{\left(\sqrt{5}+1\right) \left(\sqrt{5}+3\right)}{8} \sim 2.12$} \\
        & $\big( \pm(\frac{\sqrt{5}+1}{4}), 0, \pm \frac{1}{2} \big)$, & & & \\
        & $\big( \pm \frac{1}{2}, \pm (\frac{\sqrt{5}+1}{4}), 0 \big)$ & & & \\
        \hline
        \multirow{4}{*}{dodecahedron} & $(0, \pm (\sqrt{5} - 1), \pm(2\sqrt{5} - 4))$ & \multirow{4}{*}{$400-176\sqrt{5}$} & \multirow{4}{*}{$x = \frac{\sqrt{5} - 1}{2}z$} & \multirow{2}{*}{$\frac{320 (15127 \sqrt{5} - 33825)}{(3\sqrt{5} - 7)^4 (\sqrt{5} - 1)\sqrt{-2\sqrt{5} + 10}} $} \\
        & $(\pm(2\sqrt{5} - 4), 0, \pm (\sqrt{5} - 1))$ & & & \\
        & $(\pm (\sqrt{5} - 1), \pm(2\sqrt{5} - 4), 0)$ & & & \multirow{2}{*}{$\sim 4.49$} \\
        & $(\pm (\sqrt{5} - 3),\pm( 3-\sqrt{5}), \pm( 3-\sqrt{5}))$ & & & \\
    \end{tabular}
    }
    \caption{The sections of maximum volume of the Platonic solids.}
    \label{tab:platonic_solids}
\end{table}

\vspace*{1em}
\begin{example}[Cross-polytope and combinatorial types]\label{ex:cross-polytope}
    How many combinatorial types of affine hyperplane sections $P\cap H$ can occur when $P$ is a cross-polytope? Using our algorithm, we obtained that if $P$ is the $3$-dimensional cross-polytope, its ($2$-dimensional) hyperplane sections can be of four combinatorial types: a triangle, a quadrilateral, a pentagon or an hexagon. We note that the pentagon can only be obtained when the defining hyperplane contains a vertex of the cross-polytope. 
    
    If $P$ is the $4$-dimensional cross-polytope, then we get nine different combinatorial types, summarized in \Cref{tab:sections_4crosspolytope}.
    We compute this by first computing the sweep arrangement $\regtra(P)$, which consists of $384$ maximal regions, and then sample a direction $\bu \in R$ for each of the $1696$ cones in $\regtra(P)$ that have dimension at least $1$. We then construct the arrangement $\chamtra(P)$ of parallel hyperplanes, and store the combinatorial type of one hyperplane section for each cell of the arrangement.

    The sweep arrangement of the $5$-dimensional cross-polytope has $24482$ cones of positive dimension, $3840$ of which are of dimension $5$. After running our algorithm, we get only $14$ distinct combinatorial types of hyperplane sections. These are summarized in \Cref{tab:5-cross-poly}.
    We point out that both in the case of dimension $4$ and $5$, there are exactly two combinatorial types of sections of the cross-polytope whose $f$-vectors agree. These maximize the number of $k$-dimensional faces for all $k$.
    
    Finally, we note the duality between intersections and projections, as discussed in \Cref{section:projections}. Since the cross-polytope is polar to the cube, the duality is inherited by their sections and projections. Therefore, we deduce that there are $5, 9, 14$ combinatorial types of projections of the $3, 4, 5$-dimensional cube onto a hyperplane, respectively. 
\end{example}

\newpage
\null\vfill
\begin{table}[ht]
    \centering
    {\def\arraystretch{1.5}
    \begin{tabular}{>{\centering\arraybackslash}m{1.5cm}|| >{\centering\arraybackslash}m{3.8cm} |>{\centering\arraybackslash}m{3.5cm}|>{\centering\arraybackslash}m{3.5cm} }
        $P\cap H$ & \includegraphics[height=2.2cm]{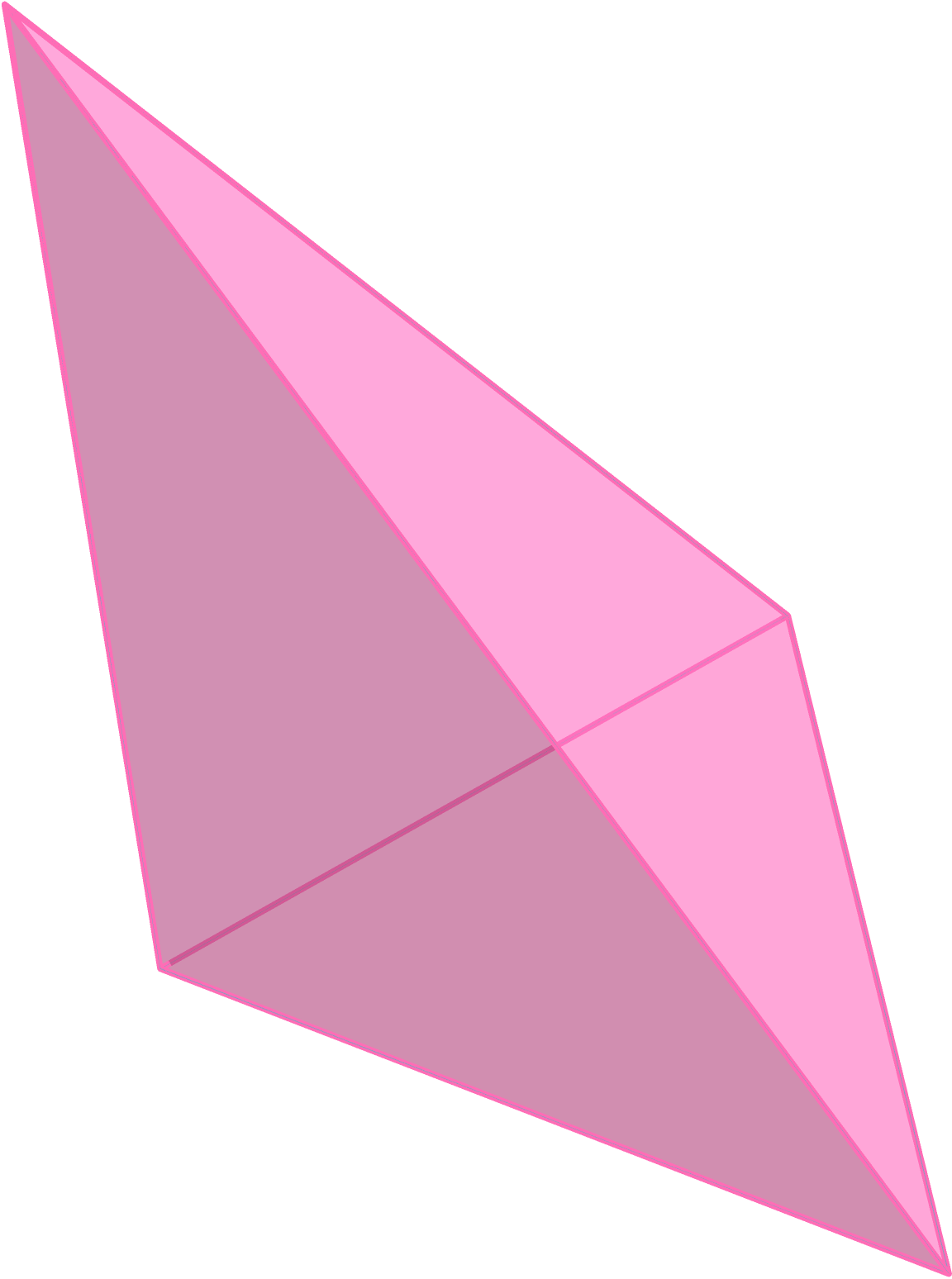} & \includegraphics[height=2.2cm]{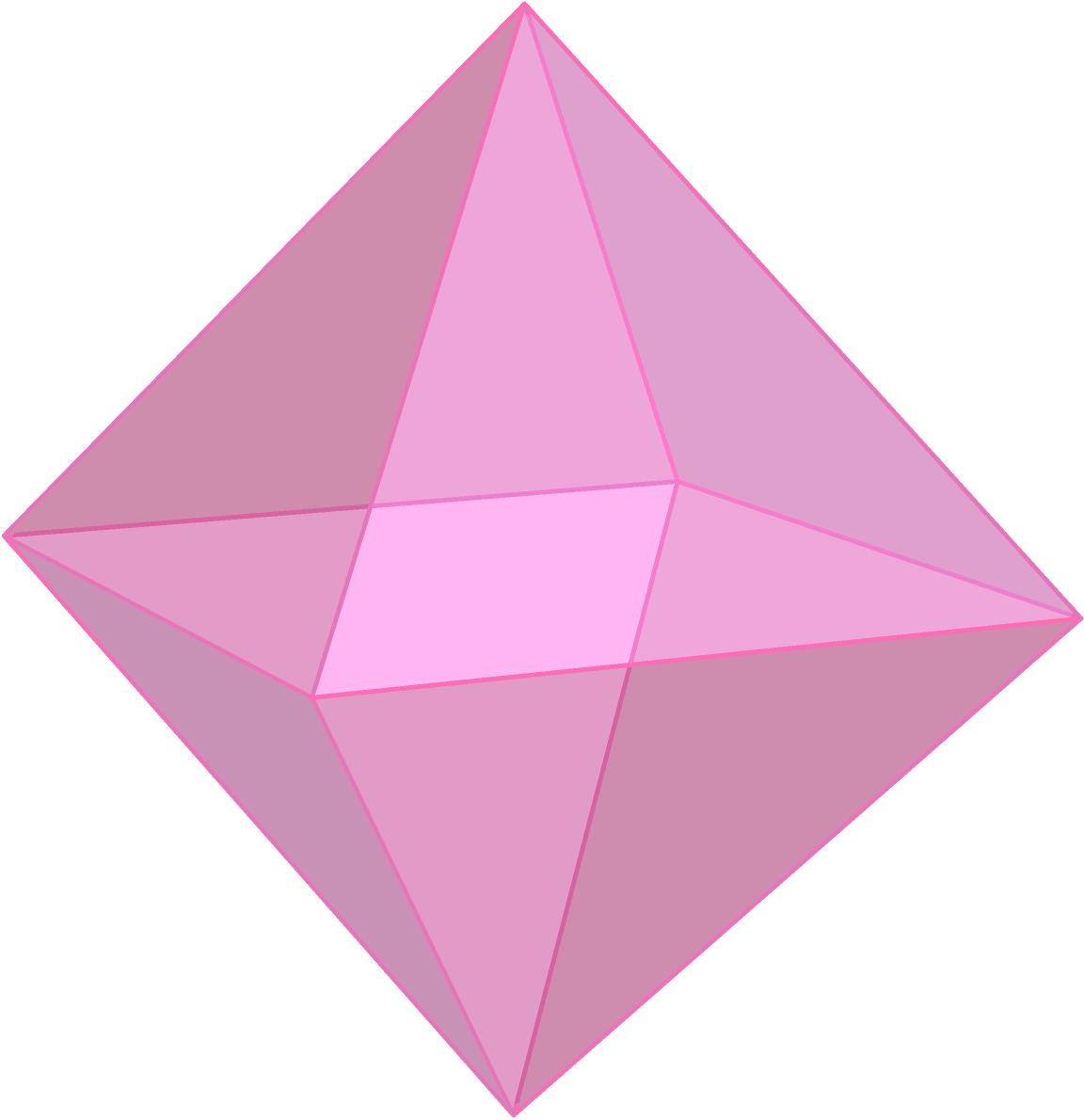} & \includegraphics[height=2.2cm]{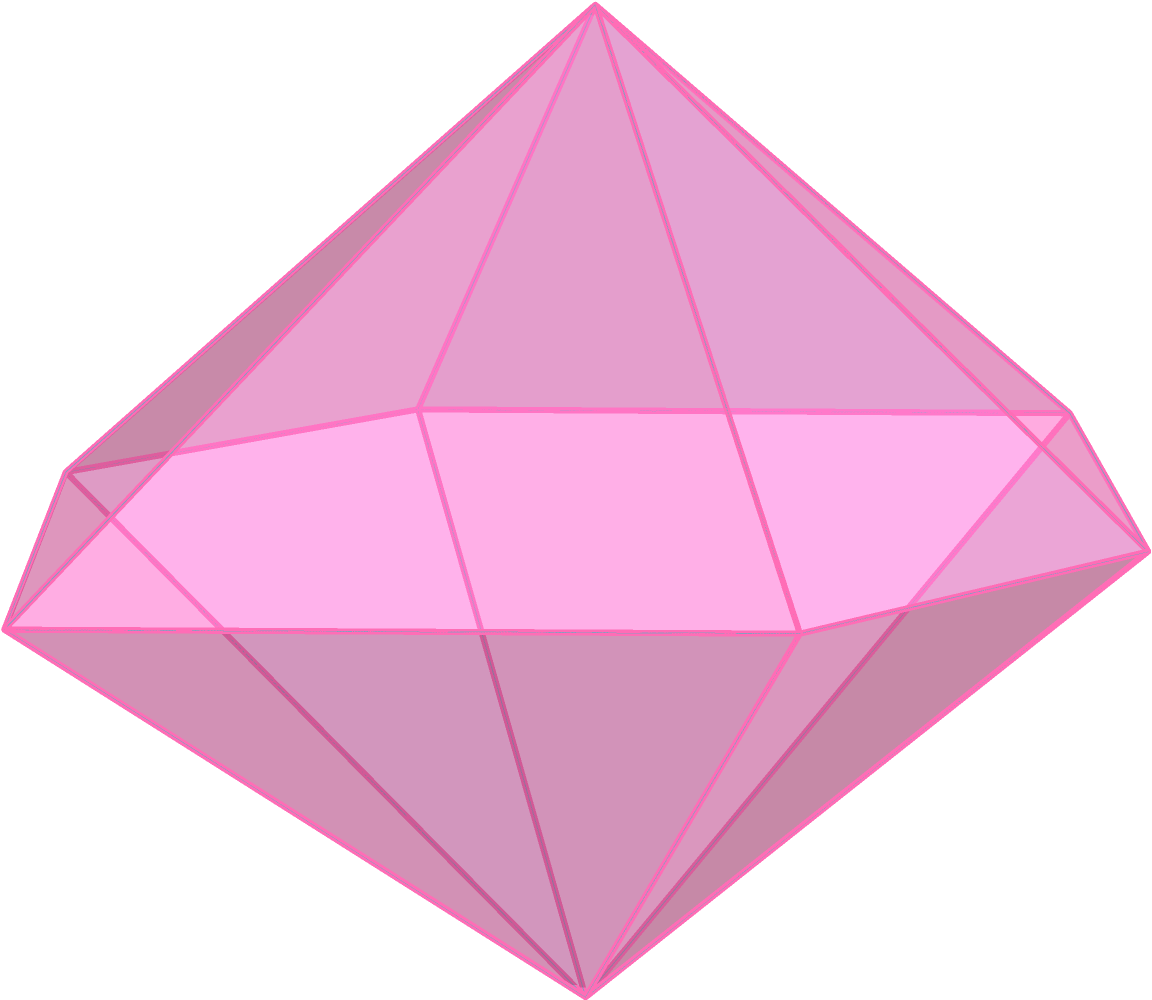} \\
         \hline
        $f$-vector & $(4,6,4)$ & $(6,12,8)$ & $(8,18,12)$ \\
        \hline
        $H$ & \small $x_1+x_2+x_3+x_4=1$ & \small $2 x_1 = 1$ & \small $x_1+x_2+x_3=0$
    \end{tabular}\\
    }
    \bigskip
    {\def\arraystretch{1.5}
    \begin{tabular}{>{\centering\arraybackslash}m{1.5cm}|| >{\centering\arraybackslash}m{3.8cm} |>{\centering\arraybackslash}m{3.5cm}|>{\centering\arraybackslash}m{3.5cm} }
        $P\cap H$ & \includegraphics[height=1.8cm]{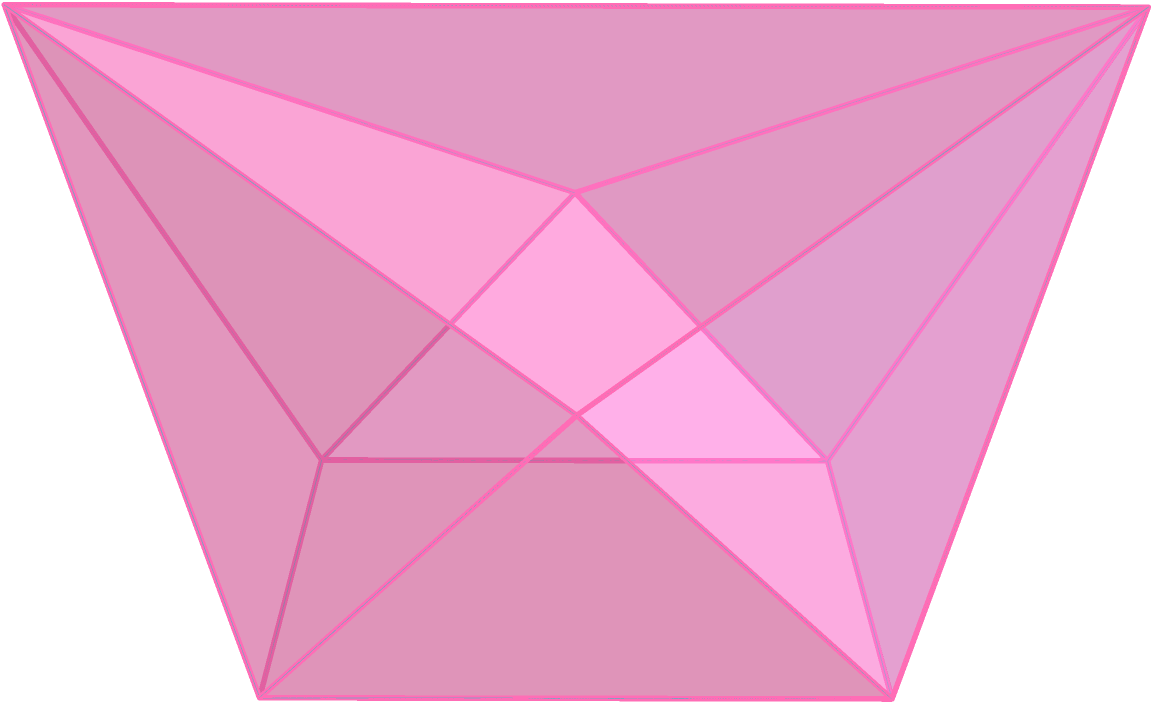} & \includegraphics[height=2.2cm]{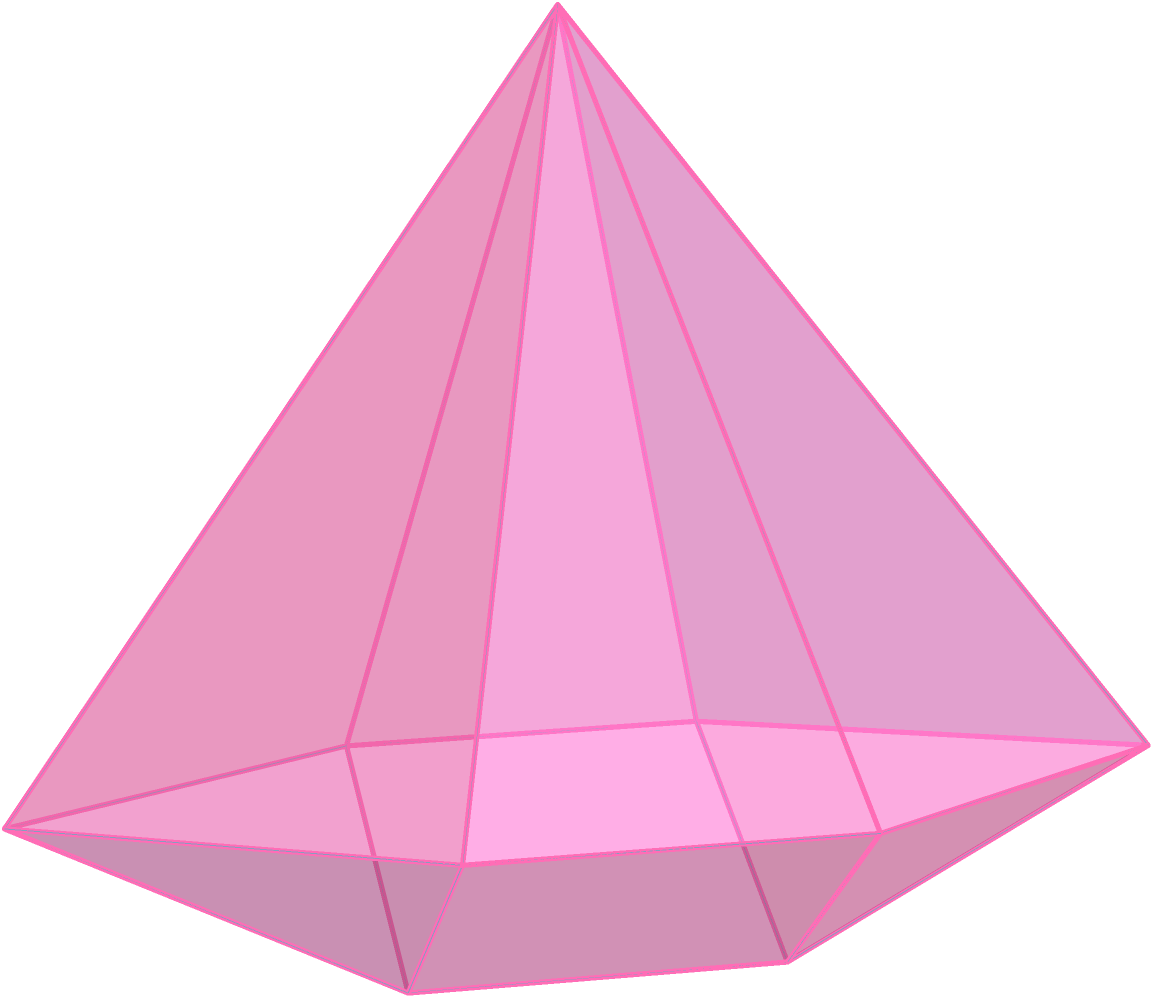} & \includegraphics[height=2.2cm]{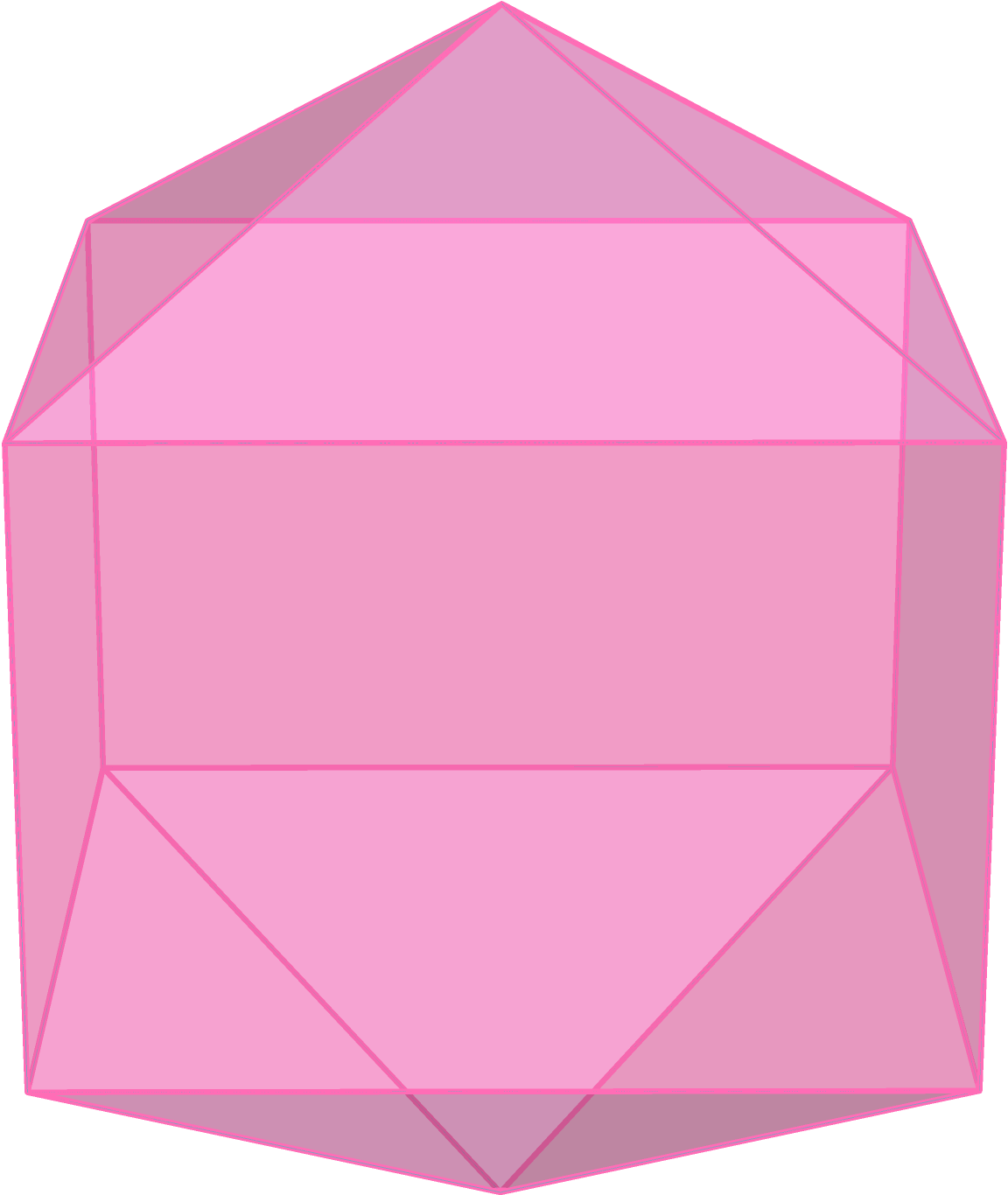} \\
         \hline
        $f$-vector & $(8,17,11)$ & $(9,19,12)$ & $(10,20,12)$ \\
        \hline
        $H$ & \small $2 x_1 + 2 x_2 + x_3 + x_4 = 1$ &  \small $2 x_1 + 2 x_2 + x_3 = 1$ & \small $2 x_1+2 x_2=1$
    \end{tabular}\\
    }
    \bigskip
    {\def\arraystretch{1.5}
    \begin{tabular}{>{\centering\arraybackslash}m{1.5cm}|| >{\centering\arraybackslash}m{3.8cm} |>{\centering\arraybackslash}m{3.5cm}|>{\centering\arraybackslash}m{3.5cm} }
        $P\cap H$ & \includegraphics[height=2.2cm]{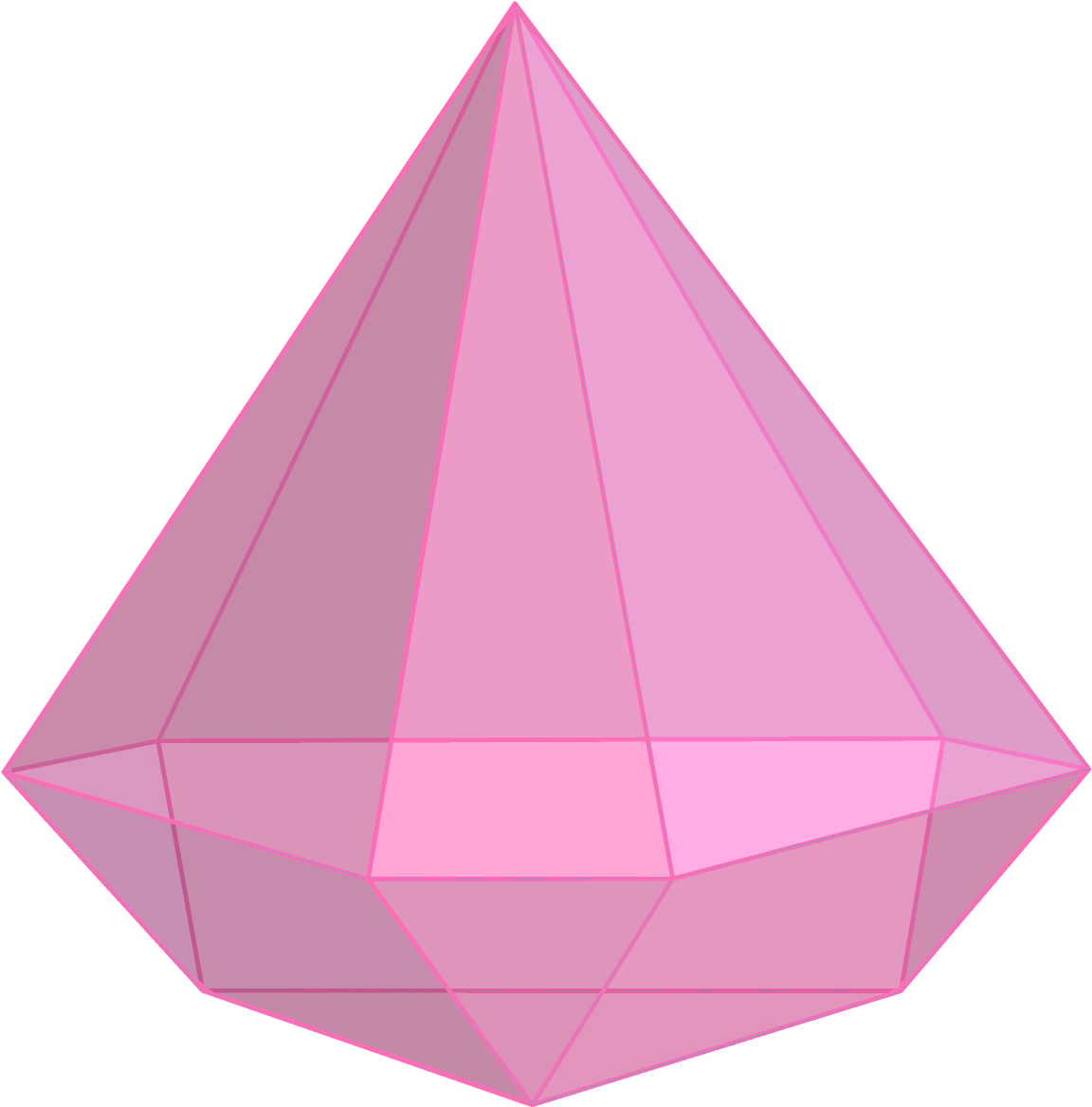} &\includegraphics[height=2.2cm]{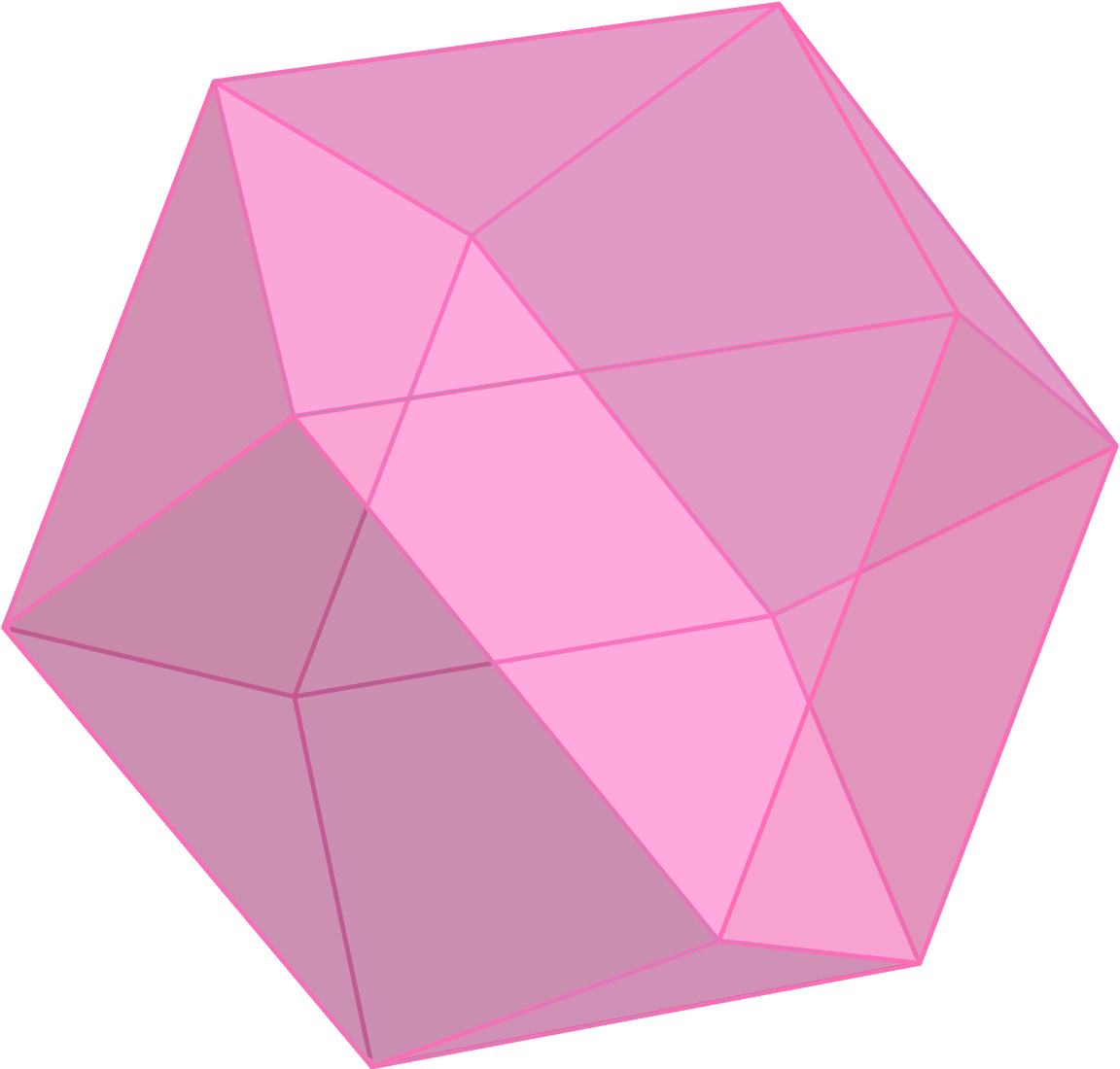} & \includegraphics[height=2cm]{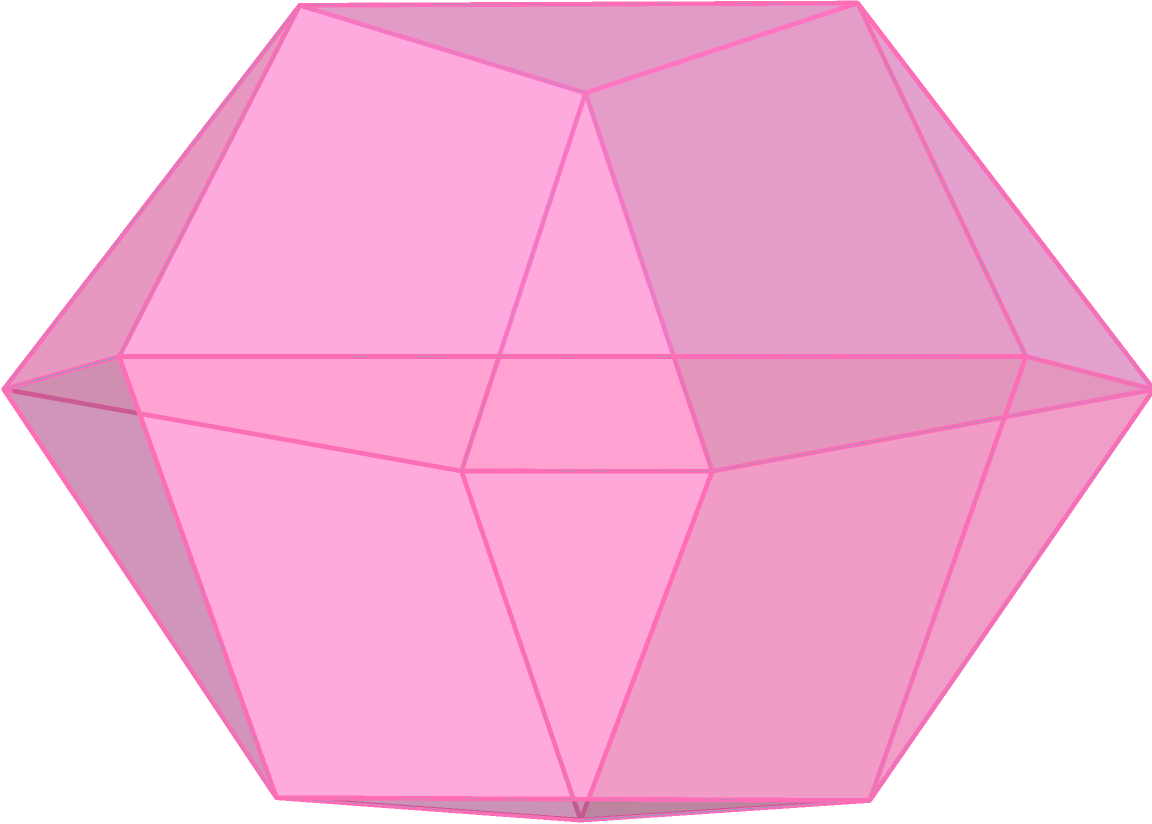} \\
         \hline
        $f$-vector & $(10,21,13)$ & $(12,24,14)$ & $(12,24,14)$ \\
        \hline
        $H$ & \small $2 x_1 + 2 x_2 + 2 x_3 + x_4 = 1$ & \small $x_1+x_2+x_3+x_4=0$ & \small $2 x_1+2 x_2 +2 x_3=1$
    \end{tabular}\\
    }
    \caption{The nine pink polytopes are all possible combinatorial types of hyperplane sections of the $4$-dimensional cross-polytope.}
    \label{tab:sections_4crosspolytope}
\end{table}

\null\vfill

\newpage
\begin{table}[!ht]
    \centering
    \begin{tabular}{>{\centering\arraybackslash}m{1.5cm}|| >{\centering\arraybackslash}m{2.5cm} |>{\centering\arraybackslash}m{2.5cm}|>{\centering\arraybackslash}m{2.5cm}|>{\centering\arraybackslash}m{2.5cm} }
       $P \cap H$  
        & \includegraphics[height=2.2cm]{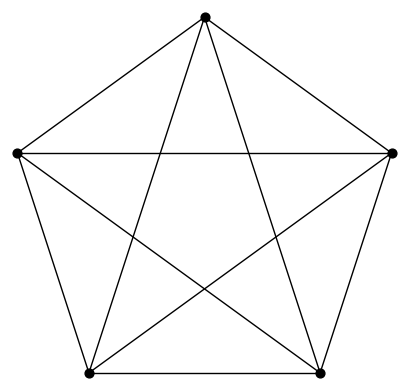} & \includegraphics[height=2.2cm]{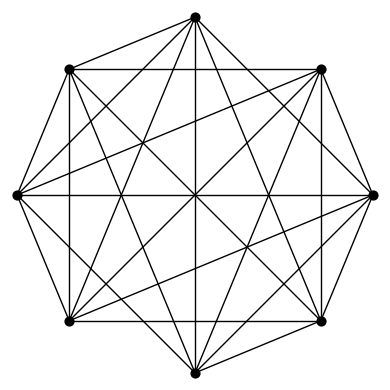}
        & \includegraphics[height=2.2cm]{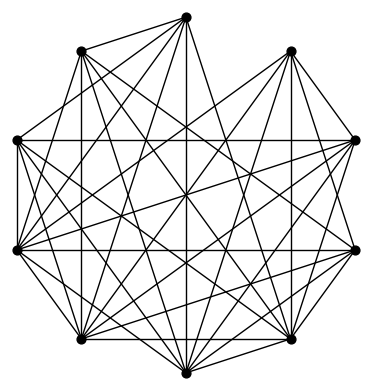}
        & \includegraphics[height=2.2cm]{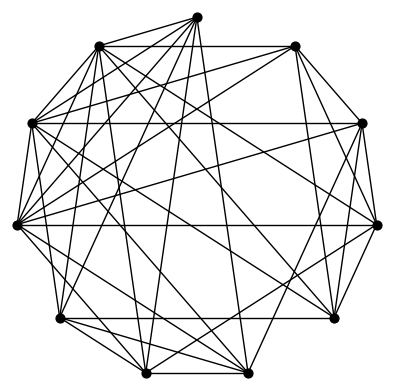}
        \\ \hline
       $f$-vector  
        & $(5,10,10,5)$ 
        & $(8, 24, 32, 16)$
        & $(10, 34, 48, 24)$
        & $(11, 36, 48, 23)$
        \\ \hline
       \begin{multirow}{2}{*}{$H$ } \end{multirow}
        & $x_1 + x_2 + x_3$
        & \begin{multirow}{2}{*}
            {$2x_1 = 1$}
        \end{multirow}
        & $x_1 + x_2$
        & $2x_1 + 2x_2 + x_3$  \\
        & $+ x_4 + x_5 = 1$
        & & $+ x_3  =  0$ &
        $ + x_4 + x_5  =  1$
    \end{tabular}\\
    \medskip
    \begin{tabular}{>{\centering\arraybackslash}m{1.5cm}|| >{\centering\arraybackslash}m{2.5cm} |>{\centering\arraybackslash}m{2.5cm}|>{\centering\arraybackslash}m{2.5cm}|>{\centering\arraybackslash}m{2.5cm} }
       $P \cap H$  
        & \includegraphics[height=2.2cm]{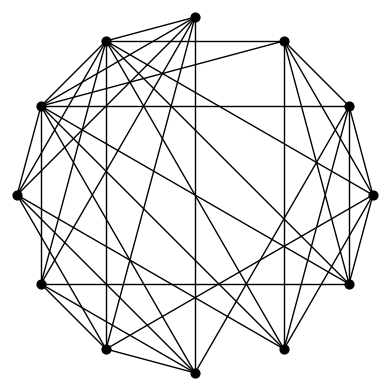} & \includegraphics[height=2.2cm]{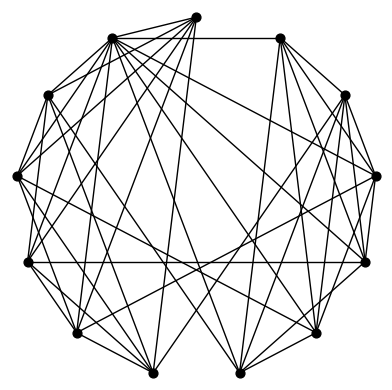}
        & \includegraphics[height=2.2cm]{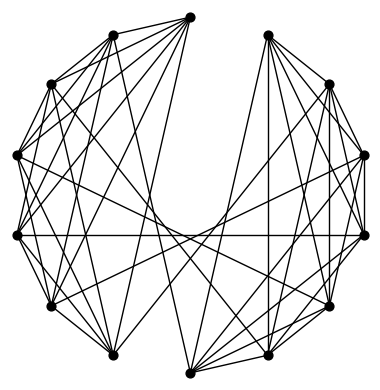}
        & \includegraphics[height=2.2cm]{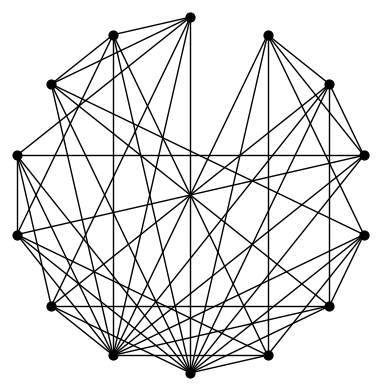}
        \\ \hline
       $f$-vector  
        & $(12, 39, 51, 24)$
        & $(13, 41, 52, 24)$
        & $(14, 42, 52, 24)$
        & $(14, 48, 62, 28)$
        \\ \hline
       \begin{multirow}{2}{*}{$H$} \end{multirow}
        & $2x_1 + 2x_2$
        & $2x_1 + 2x_2$
        & \begin{multirow}{2}{*}
            {$2x_1 + 2x_2  =  1$}
        \end{multirow}
        & $x_1 + x_2$ \\ 
        & $+ x_3 + x_4  =  1$
        & $ + x_3  =  1$
        & & $+ x_3 + x_4  =  0$
    \end{tabular}\\
    \medskip
    \begin{tabular}{>{\centering\arraybackslash}m{1.5cm}|| >{\centering\arraybackslash}m{2.5cm} |>{\centering\arraybackslash}m{2.5cm}|>{\centering\arraybackslash}m{2.5cm}|>{\centering\arraybackslash}m{2.5cm} }
       $P \cap H$  
        & \includegraphics[height=2.2cm]{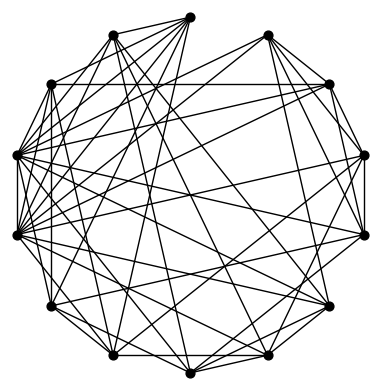} & \includegraphics[height=2.2cm]{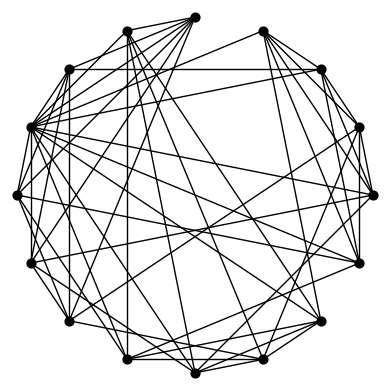}
        & \includegraphics[height=2.2cm]{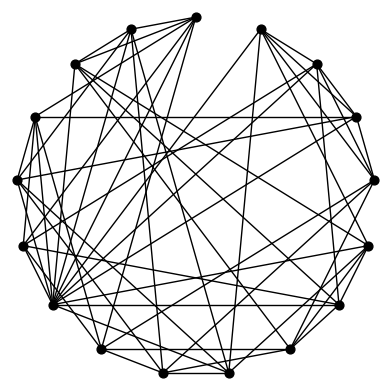}
        & \includegraphics[height=2.2cm]{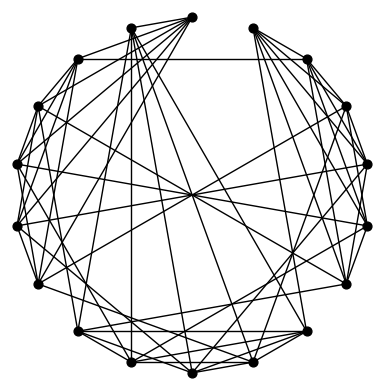}
        \\ \hline
       $f$-vector  
        & $(14, 46, 59, 27)$
        & $(16, 51, 63, 28)$
        & $(17, 54, 66, 29)$
        & $(18, 54, 64, 28)$
        \\ \hline
       \begin{multirow}{2}{*}{$H$}\end{multirow}
        & $2x_1 + 2x_2 + 2x_3$
        & $2x_1 + 2x_2 $
        & $2x_1 + 2x_2 + 2x_3$
        & $2x_1 + 2x_2 $ \\ 
        & $+ x_4 + x_5  =  1$
        & $+ 2x_3 + x_4  =  1$
        & $+ 2x_4 + x_5  =  1$
        & $+ 2x_3  =  1$
    \end{tabular}\\
    \medskip
    \begin{tabular}{>{\centering\arraybackslash}m{1.5cm}|| >{\centering\arraybackslash}m{5.44cm} |>{\centering\arraybackslash}m{5.44cm} }
       $P \cap H$  
        & \includegraphics[height=2.2cm]{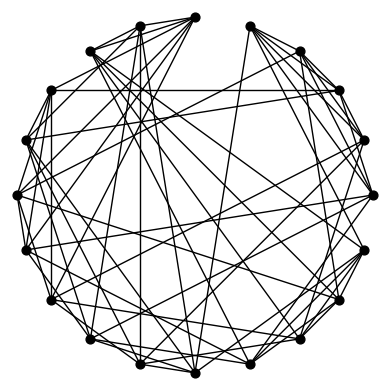} & \includegraphics[height=2.2cm]{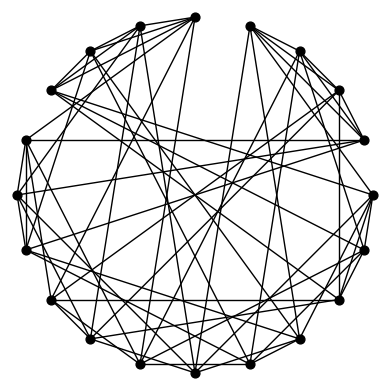}
        \\ \hline
       $f$-vector  
        & $(20, 60, 70, 30)$
        & $(20, 60, 70, 30)$
        \\ \hline
       $H$ & $2x_1 + 2x_2 + 2x_3 + 2x_4  =  1$
        & $x_1 + x_2 + x_3 + x_4 + x_5  =  0$
    \end{tabular}
    \caption{The edge-graphs of the combinatorial types of sections of the $5$-dimensional cross-polytope. The graphs alone distinguish the combinatorial type.}
    \label{tab:5-cross-poly}
\end{table}    
\vspace*{-1em}
\paragraph{Acknowledgements.} This work was started during a visit to the Max Planck Institute in Leipzig and then fully developed at the Institute for Computational and Experimental Research Mathematics of Brown University (an NSF institute supported by the National Science Foundation under Grant No. DMS-1929284), during the semester program ``Discrete Optimization: Mathematics,  Algorithms, and Computation''. Marie-Charlotte Brandenburg was funded by the Deutsche Forschungsgemeinschaft (DFG, German Research Foundation) – SPP 2298.
We are grateful to Francisco Criado Gallart for suggesting the proof of \Cref{lemma:pyramid_large_section} and allowing us to include it in our paper.
We want to thank Amitabh Basu, Saugata Basu, Peter B\"urgisser, Daniel Dadush, Jim Lawrence, Michael Roysdon, Martin Skutella, L\'aszl\'o V\'egh, Stefan Weltge, and other participants for suggestions and comments.

\printbibliography

@misc{mathematica,
  title = {Mathematica ({V}ersion 13.2)},
  author = {{Wolfram Research{,} Inc.}},
  note = {Champaign, IL},
  year = {2022},
  url  = {https://www.wolfram.com/mathematica},
}

@article {BBMS:IntersectionBodiesPolytopes,
    AUTHOR = {Berlow, Katalin and Brandenburg, Marie-Charlotte and Meroni,
              Chiara and Shankar, Isabelle},
     TITLE = {Intersection bodies of polytopes},
  JOURNAL = {Beitr\"{a}ge zur Algebra und Geometrie. Contributions to Algebra
              and Geometry},
    VOLUME = {63},
      YEAR = {2022},
    NUMBER = {2},
     PAGES = {419--439},
      ISSN = {0138-4821},
   MRCLASS = {14P10 (52A20)},
  MRNUMBER = {4429334},
       DOI = {10.1007/s13366-022-00621-7},
}

@article {klartagLehec2022bourgains,
    AUTHOR = {Klartag, Bo'az and Lehec, Joseph},
     TITLE = {Bourgain's slicing problem and {KLS} isoperimetry up to
              polylog},
   JOURNAL = {Geom. Funct. Anal.},
  FJOURNAL = {Geometric and Functional Analysis},
    VOLUME = {32},
      YEAR = {2022},
    NUMBER = {5},
     PAGES = {1134--1159},
      ISSN = {1016-443X},
   MRCLASS = {52A40},
  MRNUMBER = {4498841},
       DOI = {10.1007/s00039-022-00612-9},
}

@unpublished{GiannopoulosKoldobskyZvavitch2023,
      title={Inequalities for sections and projections of convex bodies}, 
      author={Apostolos Giannopoulos and Alexander Koldobsky and Artem Zvavitch},
      year={2023},
      archivePrefix = {arXiv},
      eprint={2302.04347}
}

@incollection {barvipom,
    AUTHOR = {Barvinok, Alexander and Pommersheim, James E.},
     TITLE = {An algorithmic theory of lattice points in polyhedra},
 BOOKTITLE = {New perspectives in algebraic combinatorics ({B}erkeley, {CA},
              1996--97)},
    SERIES = {Math. Sci. Res. Inst. Publ.},
    VOLUME = {38},
     PAGES = {91--147},
 PUBLISHER = {Cambridge Univ. Press, Cambridge},
      YEAR = {1999},
   MRCLASS = {52C07 (05A15)},
  MRNUMBER = {1731815},
}

@article {DyerFrieze88,
    AUTHOR = {Dyer, Martin E. and Frieze, A. M.},
     TITLE = {On the complexity of computing the volume of a polyhedron},
   JOURNAL = {SIAM J. Comput.},
  FJOURNAL = {SIAM Journal on Computing},
    VOLUME = {17},
      YEAR = {1988},
    NUMBER = {5},
     PAGES = {967--974},
      ISSN = {0097-5397},
   MRCLASS = {68Q25 (52-04)},
  MRNUMBER = {961051},
       DOI = {10.1137/0217060},
}

@article {BrightwellWinkler91,
    AUTHOR = {Brightwell, Graham and Winkler, Peter},
     TITLE = {Counting linear extensions},
   JOURNAL = {Order},
  FJOURNAL = {Order. A Journal on the Theory of Ordered Sets and its Applications},
    VOLUME = {8},
      YEAR = {1991},
    NUMBER = {3},
     PAGES = {225--242},
      ISSN = {0167-8094},
   MRCLASS = {06A06 (68Q25)},
  MRNUMBER = {1154926},
MRREVIEWER = {Christoph Meinel},
       DOI = {10.1007/BF00383444},
}

@incollection {khachiyan93,
    AUTHOR = {Khachiyan, Leonid},
     TITLE = {Complexity of polytope volume computation},
 BOOKTITLE = {New trends in discrete and computational geometry},
    SERIES = {Algorithms Combin.},
    VOLUME = {10},
     PAGES = {91--101},
 PUBLISHER = {Springer, Berlin},
      YEAR = {1993},
   MRCLASS = {68U05 (52A38 52B55)},
  MRNUMBER = {1228040},
MRREVIEWER = {W. J. Firey},
       DOI = {10.1007/978-3-642-58043-7\_5},
}

@article {Lawrence79,
    AUTHOR = {Lawrence, Jim},
     TITLE = {Cutting the {$d$}-cube},
   JOURNAL = {J. Res. Nat. Bur. Standards},
  FJOURNAL = {National Bureau of Standards. Journal of Research},
    VOLUME = {84},
      YEAR = {1979},
    NUMBER = {1},
     PAGES = {51--53 (1978)},
      ISSN = {0022-4340},
   MRCLASS = {52A20},
  MRNUMBER = {527768},
MRREVIEWER = {Joseph Zaks},
       DOI = {10.6028/jres.084.004},
}

@article{Chakerian+Filliman86,
    AUTHOR = {Chakerian, G. Don and Filliman, Paul},
     TITLE = {The measures of the projections of a cube},
   JOURNAL = {Studia Sci. Math. Hungar.},
  FJOURNAL = {Studia Scientiarum Mathematicarum Hungarica. Combinatorics,
              Geometry and Topology (CoGeTo)},
    VOLUME = {21},
      YEAR = {1986},
    NUMBER = {1-2},
     PAGES = {103--110},
      ISSN = {0081-6906},
   MRCLASS = {52A25 (51M25 52A20)},
  MRNUMBER = {898848},
MRREVIEWER = {H. Groemer},
}

@article {ChakerianLogothetti91,
    AUTHOR = {Chakerian, G. Don and Logothetti, Dave},
     TITLE = {Cube slices, pictorial triangles, and probability},
   JOURNAL = {Math. Mag.},
  FJOURNAL = {Mathematics Magazine},
    VOLUME = {64},
      YEAR = {1991},
    NUMBER = {4},
     PAGES = {219--241},
      ISSN = {0025-570X},
   MRCLASS = {52A37 (05A10 05A15)},
  MRNUMBER = {1131009},
MRREVIEWER = {Keith Ball},
       DOI = {10.2307/2690829},
}

@article {KFukudaetal-hyperpcount1991,
    AUTHOR = {Fukuda, Komei and Saito, Shigemasa and Tamura, Akihisa and Tokuyama, Takeshi},
     TITLE = {Bounding the number of {$k$}-faces in arrangements of hyperplanes},
      NOTE = {First Canadian Conference on Computational Geometry (Montreal, PQ, 1989)},
   JOURNAL = {Discrete Appl. Math.},
  FJOURNAL = {Discrete Applied Mathematics. The Journal of Combinatorial Algorithms, Informatics and Computational Sciences},
    VOLUME = {31},
      YEAR = {1991},
    NUMBER = {2},
     PAGES = {151--165},
      ISSN = {0166-218X},
   MRCLASS = {52B30 (52B40)},
  MRNUMBER = {1106697},
MRREVIEWER = {Michel Yves Jambu},
       DOI = {10.1016/0166-218X(91)90067-7},
}

@book {ZieglerBook,
    AUTHOR = {Ziegler, G\"{u}nter M.},
     TITLE = {Lectures on polytopes},
    SERIES = {Graduate Texts in Mathematics},
    VOLUME = {152},
 PUBLISHER = {Springer-Verlag, New York},
      YEAR = {1995},
     PAGES = {x+370},
      ISBN = {0-387-94365-X},
   MRCLASS = {52Bxx},
  MRNUMBER = {1311028},
MRREVIEWER = {Margaret M. Bayer},
       DOI = {10.1007/978-1-4613-8431-1},
}

@article{pournin-scubesections,
author = {Pournin, Lionel},
year = {2022},
month = {11},
pages = {},
title = {Shallow Sections of the Hypercube},
journal = {Israel Journal of Mathematics},
doi = {10.1007/s11856-022-2400-9}
}

@book {NPhardbook79,
    AUTHOR = {Garey, Michael R. and Johnson, David S.},
     TITLE = {Computers and intractability},
    SERIES = {A Series of Books in the Mathematical Sciences},
      NOTE = {A guide to the theory of NP-completeness},
 PUBLISHER = {W. H. Freeman and Co., San Francisco, Calif.},
      YEAR = {1979},
     PAGES = {x+338},
      ISBN = {0-7167-1045-5},
   MRCLASS = {68C25 (03D15)},
  MRNUMBER = {519066},
MRREVIEWER = {Pavel Pudl\'{a}k},
}

@article {PadrolPfeifle2015,
    AUTHOR = {Padrol, Arnau and Pfeifle, Julian},
     TITLE = {Polygons as sections of higher-dimensional polytopes},
   JOURNAL = {Electron. J. Combin.},
  FJOURNAL = {Electronic Journal of Combinatorics},
    VOLUME = {22},
      YEAR = {2015},
    NUMBER = {1},
     PAGES = {Paper 1.24, 16},
   MRCLASS = {52B11 (15B48 90C57)},
  MRNUMBER = {3315466},
       DOI = {10.37236/4315},
}

@book {TriangBook2010,
    AUTHOR = {De Loera, Jes\'{u}s A. and Rambau, J\"{o}rg and Santos, Francisco},
     TITLE = {Triangulations},
    SERIES = {Algorithms and Computation in Mathematics},
    VOLUME = {25},
      NOTE = {Structures for algorithms and applications},
 PUBLISHER = {Springer-Verlag, Berlin},
      YEAR = {2010},
     PAGES = {xiv+535},
      ISBN = {978-3-642-12970-4},
   MRCLASS = {52B55 (05C10 52B05 57Q15 68U05)},
  MRNUMBER = {2743368},
       DOI = {10.1007/978-3-642-12971-1},
}

@article {MeyerPajor88,
    AUTHOR = {Meyer, Mathieu and Pajor, Alain},
     TITLE = {Sections of the unit ball of {$L^n_p$}},
   JOURNAL = {J. Funct. Anal.},
  FJOURNAL = {Journal of Functional Analysis},
    VOLUME = {80},
      YEAR = {1988},
    NUMBER = {1},
     PAGES = {109--123},
      ISSN = {0022-1236},
   MRCLASS = {52A40 (11H25 11H46 28A75 46B20)},
  MRNUMBER = {960226},
MRREVIEWER = {Jeffrey D. Vaaler},
       DOI = {10.1016/0022-1236(88)90068-7},
}

@article {Liu+Tkocz,
    AUTHOR = {Liu, Ruoyuan and Tkocz, Tomasz},
     TITLE = {A note on the extremal non-central sections of the
              cross-polytope},
   JOURNAL = {Adv. in Appl. Math.},
  FJOURNAL = {Advances in Applied Mathematics},
    VOLUME = {118},
      YEAR = {2020},
     PAGES = {102031, 17},
      ISSN = {0196-8858},
   MRCLASS = {52A40 (52A38)},
  MRNUMBER = {4081431},
MRREVIEWER = {Aijun Li},
       DOI = {10.1016/j.aam.2020.102031},
}

@incollection {Moodyetal2013,
    AUTHOR = {Moody, James and Stone, Corey and Zach, David and Zvavitch,
              Artem},
     TITLE = {A remark on the extremal non-central sections of the unit
              cube},
 BOOKTITLE = {Asymptotic geometric analysis},
    SERIES = {Fields Inst. Commun.},
    VOLUME = {68},
     PAGES = {211--228},
 PUBLISHER = {Springer, New York},
      YEAR = {2013},
   MRCLASS = {52A40 (52A10 52A20 52A38)},
  MRNUMBER = {3076152},
MRREVIEWER = {Julio Bernu\'{e}s},
       DOI = {10.1007/978-1-4614-6406-8\_9},
}

@article {vaaler79,
    AUTHOR = {Vaaler, Jeffrey D.},
     TITLE = {A geometric inequality with applications to linear forms},
   JOURNAL = {Pacific J. Math.},
  FJOURNAL = {Pacific Journal of Mathematics},
    VOLUME = {83},
      YEAR = {1979},
    NUMBER = {2},
     PAGES = {543--553},
      ISSN = {0030-8730},
   MRCLASS = {52A40 (10E15)},
  MRNUMBER = {557952},
MRREVIEWER = {W. J. Firey},
       URL = {http://projecteuclid.org/euclid.pjm/1102784529},
}

@book {Koldobskybook2005,
    AUTHOR = {Koldobsky, Alexander},
     TITLE = {Fourier analysis in convex geometry},
    SERIES = {Mathematical Surveys and Monographs},
    VOLUME = {116},
 PUBLISHER = {American Mathematical Society, Providence, RI},
      YEAR = {2005},
     PAGES = {vi+170},
      ISBN = {0-8218-3787-7},
   MRCLASS = {42A38 (46B20 52A20 52A38)},
  MRNUMBER = {2132704},
MRREVIEWER = {Keith Ball},
       DOI = {10.1090/surv/116},
}

@article {webb,
    AUTHOR = {Webb, Simon},
     TITLE = {Central slices of the regular simplex},
   JOURNAL = {Geom. Dedicata},
  FJOURNAL = {Geometriae Dedicata},
    VOLUME = {61},
      YEAR = {1996},
    NUMBER = {1},
     PAGES = {19--28},
      ISSN = {0046-5755},
   MRCLASS = {52B12 (52A22 60D05)},
  MRNUMBER = {1389634},
MRREVIEWER = {W. J. Firey},
       DOI = {10.1007/BF00149416},
}

@article {Ardilaetal2021slicingpermutahedron,
    AUTHOR = {Ardila, Federico and Schindler, Anna and Vindas-Mel\'{e}ndez, Andr\'{e}s R.},
     TITLE = {The equivariant volumes of the permutahedron},
   JOURNAL = {Discrete Comput. Geom.},
  FJOURNAL = {Discrete \& Computational Geometry. An International Journal of Mathematics and Computer Science},
    VOLUME = {65},
      YEAR = {2021},
    NUMBER = {3},
     PAGES = {618--635},
      ISSN = {0179-5376},
   MRCLASS = {52B05 (05A05)},
  MRNUMBER = {4226485},
MRREVIEWER = {Mihai Cipu},
       DOI = {10.1007/s00454-019-00146-2},
}

@article {BilleraKapranovSturmfelsMPPs1994,
    AUTHOR = {Billera, Louis J. and Kapranov, Mikhail. M. and Sturmfels, Bernd},
     TITLE = {Cellular strings on polytopes},
   JOURNAL = {Proc. Amer. Math. Soc.},
  FJOURNAL = {Proceedings of the American Mathematical Society},
    VOLUME = {122},
      YEAR = {1994},
    NUMBER = {2},
     PAGES = {549--555},
      ISSN = {0002-9939},
   MRCLASS = {52B40 (52B99 55P99 57T30)},
  MRNUMBER = {1205482},
MRREVIEWER = {Sergey Yuzvinsky},
       DOI = {10.2307/2161048},
}

@unpublished{nayar+tkocz-2022extremalsecproj,
      title={Extremal sections and projections of certain convex bodies: a survey}, 
      author={Piotr Nayar and Tomasz Tkocz},
      year={2022},
      archivePrefix = {arXiv},
      eprint={2210.00885}
}

@article {Zaslavsky,
    AUTHOR = {Zaslavsky, Thomas},
     TITLE = {Facing up to arrangements: face-count formulas for partitions of space by hyperplanes},
   JOURNAL = {Mem. Amer. Math. Soc.},
  FJOURNAL = {Memoirs of the American Mathematical Society},
    VOLUME = {1},
      YEAR = {1975},
    NUMBER = {issue 1, 154},
     PAGES = {vii+102},
      ISSN = {0065-9266},
   MRCLASS = {05A15},
  MRNUMBER = {357135},
MRREVIEWER = {E. Jucovi\v{c}},
       DOI = {10.1090/memo/0154},
}

@book {GrunbaumBook67,
    AUTHOR = {Gr{\"{u}}nbaum, Branko},
     TITLE = {Convex polytopes},
    SERIES = {Pure and Applied Mathematics, Vol. 16},
      NOTE = {With the cooperation of Victor Klee, M. A. Perles and G. C. Shephard},
 PUBLISHER = {Interscience Publishers John Wiley \& Sons, Inc., New York},
      YEAR = {1967},
     PAGES = {xiv+456},
   MRCLASS = {52.10},
  MRNUMBER = {0226496},
MRREVIEWER = {G. T. Sallee},
}

@article {Gravinetal2012-moments,
    AUTHOR = {Gravin, Nick and Lasserre, Jean B. and Pasechnik, Dmitrii V. and Robins, Sinai},
     TITLE = {The inverse moment problem for convex polytopes},
   JOURNAL = {Discrete Comput. Geom.},
  FJOURNAL = {Discrete \& Computational Geometry. An International Journal of Mathematics and Computer Science},
    VOLUME = {48},
      YEAR = {2012},
    NUMBER = {3},
     PAGES = {596--621},
      ISSN = {0179-5376},
   MRCLASS = {52A22 (52B05 52B11 52B55)},
  MRNUMBER = {2957634},
MRREVIEWER = {Rade \v{Z}ivaljevi\'{c}},
       DOI = {10.1007/s00454-012-9426-4},
}

@article {KousholtSchulte2021moments,
    AUTHOR = {Kousholt, Astrid and Schulte, Julia},
     TITLE = {Reconstruction of convex bodies from moments},
   JOURNAL = {Discrete Comput. Geom.},
  FJOURNAL = {Discrete \& Computational Geometry. An International Journal of Mathematics and Computer Science},
    VOLUME = {65},
      YEAR = {2021},
    NUMBER = {1},
     PAGES = {1--42},
      ISSN = {0179-5376},
   MRCLASS = {52A20 (47A57 68U10 94A08 94A12)},
  MRNUMBER = {4194435},
MRREVIEWER = {Ge Xiong},
       DOI = {10.1007/s00454-020-00225-9},
}

@article {Freyer+Henk2022,
    AUTHOR = {Freyer, Ansgar and Henk, Martin},
     TITLE = {Bounds on the lattice point enumerator via slices and projections},
   JOURNAL = {Discrete Comput. Geom.},
  FJOURNAL = {Discrete \& Computational Geometry. An International Journal of Mathematics and Computer Science},
    VOLUME = {67},
      YEAR = {2022},
    NUMBER = {3},
     PAGES = {895--918},
      ISSN = {0179-5376},
   MRCLASS = {52C07 (11H06 11P21)},
  MRNUMBER = {4393084},
MRREVIEWER = {Matthias Schymura},
       DOI = {10.1007/s00454-021-00310-7},
}

@article {Johnson+Preparata1978,
    AUTHOR = {Johnson, David S. and Preparata, Franco P.},
     TITLE = {The densest hemisphere problem},
   JOURNAL = {Theoret. Comput. Sci.},
  FJOURNAL = {Theoretical Computer Science},
    VOLUME = {6},
      YEAR = {1978},
    NUMBER = {1},
     PAGES = {93--107},
      ISSN = {0304-3975},
   MRCLASS = {68A10 (52A45 68A20)},
  MRNUMBER = {464674},
MRREVIEWER = {M. Tetruashvili},
       DOI = {10.1016/0304-3975(78)90006-3},
}

@article {lawrence91,
    AUTHOR = {Lawrence, Jim},
     TITLE = {Polytope volume computation},
   JOURNAL = {Math. Comp.},
  FJOURNAL = {Mathematics of Computation},
    VOLUME = {57},
      YEAR = {1991},
    NUMBER = {195},
     PAGES = {259--271},
      ISSN = {0025-5718},
   MRCLASS = {52B55},
  MRNUMBER = {1079024},
MRREVIEWER = {Alan M. Frieze},
       DOI = {10.2307/2938672},
       }

@article{DyerGritzmannHufnagel98,
    AUTHOR = {Dyer, Martin E. and Gritzmann, Peter and Hufnagel, Alexander},
     TITLE = {On the complexity of computing mixed volumes},
   JOURNAL = {SIAM J. Comput.},
  FJOURNAL = {SIAM Journal on Computing},
    VOLUME = {27},
      YEAR = {1998},
    NUMBER = {2},
     PAGES = {356--400},
      ISSN = {0097-5397},
   MRCLASS = {68Q25 (52A39 52B55 65Y25 68U05)},
  MRNUMBER = {1616544},
MRREVIEWER = {Hans-Dietrich Hecker},
       DOI = {10.1137/S0097539794278384},
}

@unpublished{BraMer:IntersectionBodies,
  author = {Brandenburg, Marie-Charlotte and Meroni, Chiara}, 
  title = {Intersection Bodies of Polytopes: Translations and Convexity},
  archivePrefix = {arXiv},
  eprint = {2302.11764},
  year = {2023}
}

@article {Kball1,
    AUTHOR = {Ball, Keith},
     TITLE = {Cube slicing in {${\mathbb R}^n$}},
   JOURNAL = {Proc. Amer. Math. Soc.},
  FJOURNAL = {Proceedings of the American Mathematical Society},
    VOLUME = {97},
      YEAR = {1986},
    NUMBER = {3},
     PAGES = {465--473},
      ISSN = {0002-9939},
   MRCLASS = {60E15 (52A22 52A40 60D05)},
  MRNUMBER = {840631},
MRREVIEWER = {Jeffrey D. Vaaler},
       DOI = {10.2307/2046239},
}

@incollection {Kball2,
    AUTHOR = {Ball, Keith},
     TITLE = {Volumes of sections of cubes and related problems},
 BOOKTITLE = {Geometric aspects of functional analysis (1987--88)},
    SERIES = {Lecture Notes in Math.},
    VOLUME = {1376},
     PAGES = {251--260},
 PUBLISHER = {Springer, Berlin},
      YEAR = {1989},
   MRCLASS = {52A40 (52A22 60D05)},
  MRNUMBER = {1008726},
MRREVIEWER = {Jane R. Sangwine-Yager},
       DOI = {10.1007/BFb0090058},
}

@article {Koenig2021,
    AUTHOR = {K{\"{o}}nig, Hermann},
     TITLE = {Non-central sections of the simplex, the cross-polytope and the cube},
   JOURNAL = {Adv. Math.},
  FJOURNAL = {Advances in Mathematics},
    VOLUME = {376},
      YEAR = {2021},
     PAGES = {Paper No. 107458, 35},
      ISSN = {0001-8708},
   MRCLASS = {52A38 (52A20 52A40)},
  MRNUMBER = {4178930},
MRREVIEWER = {Aijun Li},
       DOI = {10.1016/j.aim.2020.107458},
}

@book {Gardnerbook,
    AUTHOR = {Gardner, Richard J.},
     TITLE = {Geometric tomography},
    SERIES = {Encyclopedia of Mathematics and its Applications},
    VOLUME = {58},
   EDITION = {Second},
 PUBLISHER = {Cambridge University Press, New York},
      YEAR = {2006},
     PAGES = {xxii+492},
      ISBN = {9781107341029},
   MRCLASS = {52A22 (44A12 92C55)},
  MRNUMBER = {2251886},
       DOI = {10.1017/CBO9781107341029},
}

@incollection {Khovanskii-slices2006,
    AUTHOR = {Khovanskii, Askold},
     TITLE = {Combinatorics of sections of polytopes and {C}oxeter groups in {L}obachevsky spaces},
 BOOKTITLE = {The {C}oxeter legacy},
     PAGES = {129--157},
 PUBLISHER = {Amer. Math. Soc., Providence, RI},
      YEAR = {2006},
   MRCLASS = {14M25 (52B05)},
  MRNUMBER = {2209026},
MRREVIEWER = {Ivan Soprunov},
}

@article {Filliman92,
    AUTHOR = {Filliman, Paul},
     TITLE = {The volume of duals and sections of polytopes},
   JOURNAL = {Mathematika},
  FJOURNAL = {Mathematika. A Journal of Pure and Applied Mathematics},
    VOLUME = {39},
      YEAR = {1992},
    NUMBER = {1},
     PAGES = {67--80},
      ISSN = {0025-5793},
   MRCLASS = {52A38},
  MRNUMBER = {1176472},
MRREVIEWER = {Keith Ball},
       DOI = {10.1112/S0025579300006860},
}

@article {fukudaMutoGoto97-5cubeslices,
    AUTHOR = {Fukuda, Hiroshi and Muto, Nobuaki and Goto, Kikuko and Nakamura, Gisaku},
     TITLE = {Sections of hyper-cube in five dimensions},
   JOURNAL = {Forma},
  FJOURNAL = {Forma},
    VOLUME = {12},
      YEAR = {1997},
    NUMBER = {1},
     PAGES = {15--33},
      ISSN = {0911-6036},
   MRCLASS = {52B11},
  MRNUMBER = {1473339},
MRREVIEWER = {H. S. M. Coxeter},
}

@unpublished{PadrolPhilippe2021,
  title={Sweeps, polytopes, oriented matroids, and allowable graphs of permutations},
  author={Arnau Padrol and Eva Philippe},
  archivePrefix = {arXiv},
  eprint={2102.06134},
  year={2021}
}

@article {grandpabibleI-1994,
    AUTHOR = {Gritzmann, Peter and Klee, Victor},
     TITLE = {On the complexity of some basic problems in computational convexity. {I}. {C}ontainment problems},
      NOTE = {Trends in discrete mathematics},
   JOURNAL = {Discrete Math.},
  FJOURNAL = {Discrete Mathematics},
    VOLUME = {136},
      YEAR = {1994},
    NUMBER = {1-3},
     PAGES = {129--174},
      ISSN = {0012-365X},
   MRCLASS = {52B55 (68Q25 68U05)},
  MRNUMBER = {1313285},
MRREVIEWER = {Jovi\v{s}a \v{Z}uni\'{c}},
       DOI = {10.1016/0012-365X(94)00111-U},
}

@incollection {grandpabible2-1994,
    AUTHOR = {Gritzmann, Peter and Klee, Victor},
     TITLE = {On the complexity of some basic problems in computational convexity. {II}. {V}olume and mixed volumes},
 BOOKTITLE = {Polytopes: abstract, convex and computational ({S}carborough,
              {ON}, 1993)},
    SERIES = {NATO Adv. Sci. Inst. Ser. C: Math. Phys. Sci.},
    VOLUME = {440},
     PAGES = {373--466},
 PUBLISHER = {Kluwer Acad. Publ., Dordrecht},
      YEAR = {1994},
   MRCLASS = {52B55 (52A39 68Q25 68U05)},
  MRNUMBER = {1322071},
       DOI = {10.1007/978-94-011-0924-6_17}
}

@incollection {grandpabible3-1997,
    AUTHOR = {Gritzmann, Peter and Klee, Victor},
     TITLE = {Computational convexity},
 BOOKTITLE = {Handbook of discrete and computational geometry},
    SERIES = {CRC Press Ser. Discrete Math. Appl.},
     PAGES = {491--515},
 PUBLISHER = {CRC, Boca Raton, FL},
      YEAR = {1997},
   MRCLASS = {65D18 (52B55)},
  MRNUMBER = {1730183},
}

@manual{sagemath,
  Author       = {{{Sage Developers}}, The},
  Title        = {{S}ageMath, the {S}age {M}athematics {S}oftware {S}ystem ({V}ersion 9.2)},
  url         = {https://www.sagemath.org},
  Year         = {2021},
}

@incollection {Stanley:IntroHyperplaneArr,
    AUTHOR = {Stanley, Richard P.},
     TITLE = {An introduction to hyperplane arrangements},
 BOOKTITLE = {Geometric combinatorics},
    SERIES = {IAS/Park City Math. Ser.},
    VOLUME = {13},
     PAGES = {389--496},
 PUBLISHER = {Amer. Math. Soc., Providence, RI},
      YEAR = {2007},
   MRCLASS = {52C35 (05B35 55R80)},
  MRNUMBER = {2383131},
       DOI = {10.1090/pcms/013/08},
}

@article {baldoni11_howintegratepolynomial,
    AUTHOR = {Baldoni, Velleda and Berline, Nicole and De Loera, Jesus A.
              and K\"{o}ppe, Matthias and Vergne, Michele},
     TITLE = {How to integrate a polynomial over a simplex},
   JOURNAL = {Math. Comp.},
  FJOURNAL = {Mathematics of Computation},
    VOLUME = {80},
      YEAR = {2011},
    NUMBER = {273},
     PAGES = {297--325},
      ISSN = {0025-5718},
   MRCLASS = {68W30 (52B55 65Y20)},
  MRNUMBER = {2728981},
       DOI = {10.1090/S0025-5718-2010-02378-6},
}

@article {lasserre01_multidimensionalversion,
    AUTHOR = {Lasserre, Jean B. and Avrachenkov, Konstantin E.},
     TITLE = {The multi-dimensional version of {$\int^b_a x^p dx$}},
   JOURNAL = {Amer. Math. Monthly},
  FJOURNAL = {American Mathematical Monthly},
    VOLUME = {108},
      YEAR = {2001},
    NUMBER = {2},
     PAGES = {151--154},
      ISSN = {0002-9890},
   MRCLASS = {26B15},
  MRNUMBER = {1818188},
MRREVIEWER = {M. K. Nayak},
       DOI = {10.2307/2695528},
}

@book {BPR:algoinRAG,
    AUTHOR = {Basu, Saugata and Pollack, Richard and Roy, Marie-Fran\c{c}oise},
     TITLE = {Algorithms in real algebraic geometry},
    SERIES = {Algorithms and Computation in Mathematics},
    VOLUME = {10},
   EDITION = {Second},
 PUBLISHER = {Springer-Verlag, Berlin},
      YEAR = {2006},
     PAGES = {x+662},
      ISBN = {978-3-540-33098-1},
   MRCLASS = {14P10 (03C10 52C45 68Q25 68W30)},
  MRNUMBER = {2248869},
       DOI = {10.1007/3-540-33099-2}
}

@article {Walkup68:Simplex5D,
    AUTHOR = {Walkup, David W.},
     TITLE = {A simplex with a large cross section},
   JOURNAL = {Amer. Math. Monthly},
  FJOURNAL = {American Mathematical Monthly},
    VOLUME = {75},
      YEAR = {1968},
     PAGES = {34--36},
      ISSN = {0002-9890},
   MRCLASS = {52.40},
  MRNUMBER = {226506},
MRREVIEWER = {H. T. Croft},
       DOI = {10.2307/2315102},
}

@Incollection{KlaMil22:SlicingProblem,
author="Klartag, Bo'az
and Milman, Vitali",
editor="Avila, Artur
and Rassias, Michael Th.
and Sinai, Yakov",
title="The Slicing Problem by Bourgain",
bookTitle="Analysis at Large: Dedicated to the Life and Work of Jean Bourgain",
year="2022",
publisher="Springer International Publishing",
address="Cham",
pages="203--231",
isbn="978-3-031-05331-3",
doi="10.1007/978-3-031-05331-3_9",
}

@book {Barvinok:CourseConvexity,
    AUTHOR = {Barvinok, Alexander},
     TITLE = {A course in convexity},
    SERIES = {Graduate Studies in Mathematics},
    VOLUME = {54},
 PUBLISHER = {American Mathematical Society, Providence, RI},
      YEAR = {2002},
     PAGES = {x+366},
      ISBN = {0-8218-2968-8},
   MRCLASS = {52-02 (49N15 52-01 90-02 90C05 90C22 90C25)},
  MRNUMBER = {1940576},
MRREVIEWER = {P. McMullen},
       DOI = {10.1090/gsm/054},
}

@article {Lutwak:IntersectionBodies,
    AUTHOR = {Lutwak, Erwin},
     TITLE = {Intersection bodies and dual mixed volumes},
   JOURNAL = {Adv. in Math.},
  FJOURNAL = {Advances in Mathematics},
    VOLUME = {71},
      YEAR = {1988},
    NUMBER = {2},
     PAGES = {232--261},
      ISSN = {0001-8708},
   MRCLASS = {52A40 (52A20)},
  MRNUMBER = {963487},
MRREVIEWER = {Jane R. Sangwine-Yager},
       DOI = {10.1016/0001-8708(88)90077-1},
}

@article {BilStu:FiberPolytopes,
    AUTHOR = {Billera, Louis J. and Sturmfels, Bernd},
     TITLE = {Fiber polytopes},
   JOURNAL = {Ann. of Math. (2)},
  FJOURNAL = {Annals of Mathematics. Second Series},
    VOLUME = {135},
      YEAR = {1992},
    NUMBER = {3},
     PAGES = {527--549},
      ISSN = {0003-486X},
   MRCLASS = {52B05 (52B12 52B35)},
  MRNUMBER = {1166643},
MRREVIEWER = {P. McMullen},
       DOI = {10.2307/2946575},
}

@unpublished{Klartag:LogBoundBourgain,
      title={Logarithmic bounds for isoperimetry and slices of convex sets}, 
      author={Bo'az Klartag},
      year={2023},
      eprint={2303.14938},
      archivePrefix={arXiv}
}

@article {Lasserre:Integration,
    AUTHOR = {Lasserre, Jean B.},
     TITLE = {Simple formula for integration of polynomials on a simplex},
   JOURNAL = {BIT},
  FJOURNAL = {BIT. Numerical Mathematics},
    VOLUME = {61},
      YEAR = {2021},
    NUMBER = {2},
     PAGES = {523--533},
      ISSN = {0006-3835},
   MRCLASS = {65D30 (44A10 78M12)},
  MRNUMBER = {4258026},
       DOI = {10.1007/s10543-020-00828-x},
       URL = {https://doi.org/10.1007/s10543-020-00828-x},
}

@book {Schneider:Bible,
    AUTHOR = {Schneider, Rolf},
     TITLE = {Convex bodies: the {B}runn-{M}inkowski theory},
    SERIES = {Encyclopedia of Mathematics and its Applications},
    VOLUME = {151},
   EDITION = {expanded},
 PUBLISHER = {Cambridge University Press, Cambridge},
      YEAR = {2014},
     PAGES = {xxii+736},
      ISBN = {978-1-107-60101-7},
   MRCLASS = {52-02 (52A20 52A39)},
  MRNUMBER = {3155183},
}

\vspace*{\fill}

\noindent \textsc{Marie-Charlotte Brandenburg} \\
\textsc{ Max Planck Institute for Mathematics in the Sciences \\
Inselstra{\ss}e 22, 04103 Leipzig, Germany} \\
 \url{marie.brandenburg@mis.mpg.de} \\

\noindent \textsc{Jesús A. De Loera} \\
\textsc{Department of Mathematics\\ One Shields Avenue, Davis CA 95616, USA} \\
\url{deloera@math.ucdavis.edu} \\

\noindent \textsc{Chiara Meroni} \\
\textsc{ Institute for Computational and Experimental Research in Mathematics \\
121 South Main Street, Providence 02903, RI, USA} \\
\url{chiara\_meroni@brown.edu} \\

\end{document}